\newcommand{\be}{\begin{equation}}
\newcommand{\ee}{\end{equation}}
\newcommand{\bee}{\begin{equation*}}
\newcommand{\eee}{\end{equation*}}
\newcommand{\bea}{\begin{eqnarray}}
\newcommand{\eea}{\end{eqnarray}}
\newcommand{\beaa}{\begin{eqnarray*}}
\newcommand{\eeaa}{\end{eqnarray*}}
\newcommand{\st}{\,\textrm{s.t.}\,}  
\newcommand{\T}{\mathrm{T}}  
\newcommand{\KK}{\mathcal{K}} 
\newcommand{\KKc}{\mathcal{K}^\circ} 
\newcommand{\RR}{\mathbb{R}}
\newcommand{\BB}{\mathbb{B}}
\newcommand{\PP}{\mathcal{P}}
\newtheorem{assumption}[theorem]{Assumption}
\newcounter{cnt}
\xdef \csname c\Alph{cnt}\endcsname {\noexpand\mathcal{\Alph{cnt}}}%
\xdef \csname b\Alph{cnt}\endcsname {\noexpand\mathbb{\Alph{cnt}}}%
\newcommand{\ox}{\overline{x}}
\newcommand{\oy}{\overline{y}}
\newcommand{\prox}{\mathbf{prox}} 
\newcommand{\iprod}[2]{\left\langle{#1},{#2}\right\rangle}
\newcommand{\nrm}[1]{\left\|#1\right\|}
\newcommand{\dom}{\mathrm{dom}} 
\newcommand{\dist}[1]{\mathrm{dist}\left(#1\right)} 
\newcommand{\distw}[2]{\mathrm{dist}_{#1}\left(#2\right)} 
\newcommand{\bigO}[1]{\mathcal{O}\left(#1\right)}
\newcommand{\abs}[1]{\left|#1\right|}
\renewcommand{\neg}[1]{\left[#1\right]_{-}}
\newcommand{\pos}[1]{\left[#1\right]_{+}}
\newcommand{\diag}{\operatorname{diag}}
\newcommand{\maxop}[1]{\max\left\{#1\right\}}
\DeclareMathOperator*{\argmin}{arg\,min}
\newcommand{\cpos}[1]{\mathcal{P}_{+}\left(#1\right)}
\newcommand{\cneg}[1]{\mathcal{P}_{-}\left(#1\right)}
\newcommand{\negp}[1]{\left[#1\right]_-}
\begin{document}

\title{A Unified Primal-Dual Algorithm Framework for Inequality Constrained Problems
}

\author{Zhenyuan Zhu \and Fan Chen \and Junyu Zhang \and Zaiwen Wen  
}

\institute{Zhenyuan Zhu \at
              Beijing International Center for Mathematical Research, Peking University, Beijing, China\\
              \email{zhenyuanzhu@pku.edu.cn} 
           \and
           Fan Chen \at 
              School of Mathematical Science, Peking University, Beijing, China\\
              \email{chern@pku.edu.cn}
          \and
             Corresponding Author: Junyu Zhang \at
             Department of Industrial Systems Engineering and Management, National University of Singapore, Singapore, Singapore\\
             \email{junyuz@nus.edu.sg}
          \and
          Zaiwen Wen \at 
              Beijing International Center for Mathematical Research, Peking University, Beijing, China\\
              \email{wenzw@pku.edu.cn}\vspace{0.2cm}\\
          Zhenyuan Zhu and Fan Chen contribute equally to this paper. 
}

\date{}

\maketitle

\begin{abstract}
  In this paper, we propose a unified primal-dual algorithm framework based on the augmented Lagrangian function for composite convex problems with conic inequality constraints. The new framework is highly versatile. First, it not only covers many existing algorithms such as PDHG, Chambolle-Pock (CP), GDA, OGDA and linearized ALM, but also guides us to design a new efficient algorithm called Simi-OGDA (SOGDA). Second, it enables us to study the role of the augmented penalty term in the convergence analysis. Interestingly, a properly selected penalty  not only improves the numerical performance of the above methods, but also theoretically enables the convergence of algorithms like PDHG and SOGDA. 
  Under properly designed step sizes and penalty term, our unified framework preserves the $\mathcal{O}(1/N)$ ergodic convergence while not requiring any prior knowledge about the magnitude of the optimal Lagrangian multiplier. Linear convergence rate for affine equality constrained problem is also obtained given appropriate conditions. Finally, numerical experiments on linear programming, $\ell_1$ minimization problem,  and multi-block basis pursuit problem demonstrate the efficiency of our methods.
\keywords{affine constraint\and primal-dual method\and augmented Lagrangian\and convergence }
\subclass{90C25\and 90C46\and 90C47\and 90C60}
\end{abstract}

\section{Introduction}
In this work, we consider the following convex composite optimization problem:
\begin{equation}\label{prob}
    \begin{split}
    \min_x\ & \Phi(x):=f(x)+h(x),\\
    \st\ & Ax- b\in\KK.
    \end{split}
\end{equation}
In this problem, $f(x): \mathbb{R}^n\mapsto\mathbb{R}$ is a differentiable convex function whose gradient $\nabla f(\cdot)$ is $L_f$-Lipschitz continuous, $h(x): \mathbb{R}^n\mapsto\mathbb{R}$ is a simple convex function whose proximal operator can be efficiently evaluated, the set $\KK$ is either $\{0\}$ or a proper cone, and $A\in\mathbb{R}^{m\times n}$, $b\in \mathbb{R}^m$ are some matrix and vector. In particular,  \eqref{prob} becomes the standard affine equality constrained problem when $\KK=\{0\}$.

For problem \eqref{prob} with $\mathcal{K}=\{0\}$, the augmented Lagrangian method (ALM) and the alternating direction method of multipliers (ADMM) are the most popular algorithms. ALM  minimizes the augmented Lagrangian function with respect to primal variables and applies a gradient ascent step on the dual variable. Due to Danskin's theorem, ALM can thus be interpreted as applying gradient ascent method to the dual function. For multi-block problems with separable structure, ADMM replaces the exact primal minimization of ALM with one cycle of block coordinate minimization, which can be viewed as an approximate dual gradient ascent. However, the approximation in the primal minimization step makes the convergence of ADMM subtle. When the number of blocks equals two, the convergence of ADMM was established in the context of operator splitting \cite{eckstein1992douglas}. When the number of blocks is greater than two, a counterexample was constructed in \cite{chen2016direct}, indicating that the direct extension of ADMM may not necessarily converge for general multi-block problems. 
To ensure convergence, one must make further modification to the algorithm \cite{deng2017parallel,gao2019randomized}, or assuming certain extra conditions on the problem \cite{cai2017convergence,chen2013convergence,li2015convergent,lin2015global,lin2015sublinear}. 

In contrast to the (approximate) dual gradient methods that require (approximately) solving the primal problem in each iteration, the primal-dual methods like Chambolle-Pock \cite{chambolle2011first,chambolle2016ergodic}  update the primal and dual variables equally with the current gradient information. Thus they experience no trouble from the inexact primal minimization. 
So far, the primal-dual methods have not been extensively studied in the context of affinely constrained problems. Existing results mainly focus the saddle point problem:
\begin{equation}\label{generalminmax}
\begin{split}
\min_{x\in\mathcal{X}}\max_{y\in\mathcal{Y}}\ h(x)+\Psi(x,y)-s(y),
\end{split}
\end{equation}
where efficient projections onto $\cX$ and $\cY$ are available. One early algorithm for solving \eqref{generalminmax} is the Arrow-Hurwicz method \cite{uzawa1958iterative}, which is also known as the primal-dual hybrid gradient (PDHG) method \cite{zhu2008efficient}. The convergence of PDHG has been established when one of $h(\cdot)$ and $s(\cdot)$ is strongly convex, while a non-convergent example exists if strong convexity is not available \cite{he2014convergence}. 
The Chambolle-Pock method (CP) \cite{chambolle2011first} is another famous primal-dual algorithm. 
In terms of the duality gap, the CP method preserves an $\mathcal{O}(1/N)$ ergodic convergence rate, and is shown to be optimal \cite{zhang2021lower}.
It is worth mentioning that both PDHG and CP focus on the case where $\Psi(x,y)$ is bilinear, that is, $\Psi(x,y)=x^\T Ky$.
Recently, the optimistic gradient descent ascent (OGDA) method has gained popularity due to its superior empirical performance in GANs training \cite{daskalakis2017training}. 
For bilinear objective function, the convergence of OGDA was studied in \cite{daskalakis2017training,liang2019interaction}. By viewing OGDA as an approximate proximal point method,  the $\mathcal{O}(1/N)$ convergence was established  in \cite{mokhtari2020convergence}  for smooth convex-concave problem, as well as linear convergence for smooth strongly convex strongly concave problem.  Further attempt was made in \cite{wei2020linearOGDA} to prove the last-iterate linear convergence under metric subregularity.  To our best knowledge, the OGDA methods has yet been studied for  general non-smooth convex-concave problem.

Notice that most of the convergence results of PDHG, Chambolle-Pock and OGDA are established with respect to the duality gap of \eqref{generalminmax}. However, for the saddle point problem induced by the (augmented) Lagrangian function of some constrained optimization problem,  the duality gap is no longer a valid measure of convergence due to the unboundedness of the dual domain, see \cite{zhang2021primal}. One general way to avoid this issue is artificially adding a suitable norm constraint to the multipliers. However, properly constructing this constraint requires prior knowledge on the magnitude of the optimal Lagrangian multiplier, which is impractical in many applications. 
{For specific algorithms such as the Chambolle-Pock method \cite{hamedani2021primal} and the linearized ALM \cite{xu2021first,luo2021accelerated}, as well as their variants, the convergence without bounded dual domain has been established. However, similar guarantee is still not available for many other primal-dual methods like OGDA, our newly proposed SOGDA, and so on. }
In this work, we integrate several primal-dual methods into a unified algorithmic framework and establish a unified ergodic and nonergodic convergence for the main problem \eqref{prob}, without requiring prior knowledge on the bound of the optimal Lagrangian multiplier. The main contributions are summarized as follows:

\begin{itemize}
\item We propose a unified primal-dual algorithm framework for solving \eqref{prob}. It covers several well-known algorithms, such as PDHG, Chambolle-Pock, GDA, OGDA and linearized ALM. By properly selecting the parameters of our framework, we also introduce a new algorithm called Semi-OGDA, abbreviated as SOGDA, which demonstrates promising performance in the numerical experiments.\vspace{0.2cm}
\item By analyzing the constraint violation and function value gap instead of the duality gap, we establish a unified analysis for our framework to achieve an $\mathcal{O}(1/N)$ ergodic convergence for problem \eqref{prob}, without requiring any prior knowledge on the magnitude of the optimal multiplier. Furthermore, the non-ergodic convergence of our algorithm framework can also be established. Under suitable conditions, our algorithm achieves linear convergence. \vspace{0.2cm}
\item Unlike the standard primal-dual algorithms like PDHG who solve a minimax problem based on the Lagrangian function of problem \eqref{prob}, we build our framework upon the augmented Lagrangian function. On the one hand, our analysis shows that adding an augmented penalty term makes the constraint $y\in\KK^*$ optional for the Lagrangian multiplier $y$. This can often improve the flexibility of the algorithm design since one can fully remove this constraint if it causes difficulty in solving subproblems. We also prove that the augmented penalty term can  theoretically improve the convergence guarantee for PDHG and SOGDA.  On the other hand, algorithms with an augmented Lagrangian function usually converge faster than those based on the Lagrangian function in numerical experiments.\vspace{0.2cm}
\item Our analysis framework can be easily extended beyond bilinear saddle point problem for augmented Lagrangian function. As a byproduct of our analysis, we derive the convergence result of the proximal OGDA, generalization of OGDA to non-smooth problem.
\end{itemize}

\subsection{Notations}
We use $\langle \cdot, \cdot\rangle$ to denote Euclidean inner product, and we use $\|\cdot\|$ to denote the Euclidean norm for a vector or the spectral norm for a matrix. For any symmetric matrix $A$, we define the weighted inner product $\langle x, y\rangle_A:=\langle x, Ay\rangle = \langle Ax, y\rangle$, and when $A\succeq0$, we define the corresponding norm $\|x\|_A:=\sqrt{\langle x, x\rangle_A}$. For any set $\mathcal{X}$, the indicator function is defined as $\mathbbm{1}_\mathcal{X}(x)=0$ if $x\in\mathcal{X}$ and $\mathbbm{1}_\mathcal{X}(x)=+\infty$ if $x\notin \mathcal{X}$. The distance between the point $x$ and the set $\mathcal{X}$ is defined as $\dist{x,\mathcal{X}}:=\min_{y\in\mathcal{X}}\|x-y\|_2$, and more generally, $\distw{A}{x,\mathcal{X}}:=\min_{y\in\mathcal{X}}\|x-y\|_A$. For any cone $\KK$, we denote its dual cone by $\KK^*$ and define the polar cone as $\KK^\circ:=-\KK^*$. Let $\mathcal{S}$ be a closed set, we use $\mathcal{P}_{\mathcal{S}}(\cdot)$ to denote the projection operator to $\mathcal{S}$. For simplicity, we will write $\cpos{\cdot}:=\PP_{\KK^\circ}(\cdot)$ and $\cneg{\cdot}:=-\mathcal{P}_{\KK}\left(\cdot\right)$ throughout the paper. 

\subsection{Basic assumptions}
\begin{assumption}\label{assumption1}
    $f(x)$ is a convex differentiable function and $h(x)$ is a lower semi-continuous convex function.
\end{assumption}
\begin{assumption}\label{assumption2}
	The gradient of $f(x)$ is $L_f$-Lipschitz continuous, that is,
	$$\|\nabla f(x)-\nabla f(x')\|\leq L_f\|x-x'\|, \quad \forall x,x'.$$ 
\end{assumption}
For the convenience of notation, we define $f_\rho(x):=f(x)+\frac{\rho}{2}\|Ax-b\|^2$ and let $L_{f_\rho}:=L_f+\rho\|A\|^2$ be the Lipschitz constant of $\nabla f_\rho(x)$.
\begin{assumption}\label{assumption3}
    The optimal solution of \eqref{prob} is attainable, that is, there exists $x^*\in\RR^n$ such that $Ax^*-b\in\KK$ and $\Phi(x^*)$ equals to the optimal value $\Phi^*$.
\end{assumption}
\begin{assumption}[Slater's condition]\label{Slater's}
    For the convex problem \eqref{prob},
    let $\mathcal{D}=\dom\ \Phi$ and $\mathrm{relint}\ \mathcal{D}$ denote the relative interior of the set $\mathcal{D}$. 
    There exists $x\in \mathrm{relint}\ \mathcal{D}$ such that $Ax-b\in\mathrm{int}\ \KK$, where $\mathrm{int}\ \KK$ denotes the interior of the cone $\KK$.
    When the cone $\KK$ is polyhedral (including the case of $\KK=\{0\}$), the condition can be relaxed to the existence of $x\in \mathrm{relint}\ \mathcal{D}$ such that $Ax-b\in\KK$.
\end{assumption}

\section{Primal-dual methods for conic inequality constrained problems}\label{ogdasec}

\subsection{The augmented Lagrangian duality}\label{aldual}
When $\KK=\{0\}$ or $\KK$ is a proper cone that has nonempty interior, the strong duality holds under Assumptions \ref{assumption1} and \ref{Slater's}. 
It implies that the minimization problem \eqref{prob} is equivalent to a saddle point problem:
\begin{eqnarray}
	\label{saddleprob0}
	\min_x\max_{y\in\KK^*}\ \mathcal{L}(x,y) = f(x)+h(x)-y^\T(Ax-b).
\end{eqnarray} 
In this section, we generalize the strong duality to the saddle point formulation of problem \eqref{prob} based on the augmented Lagrangian function. Importantly, by incorporating the augmented penalty term, the dual constraint $y\in\KK^*$ becomes optional. One can fully remove this constraint if it causes difficulty in solving subproblems. 
Although the augmented duality for inequality constraint has been discussed in \cite{rockafellar2009variational}, we provide a simpler proof for problem \eqref{prob}.
\begin{lemma}\label{lemma1} 
	Suppose that $\KK$ is $\{0\}$ or a proper cone. Given any penalty coefficient $\rho>0$, we define the augmented Lagrangian function as
\begin{equation}\label{lagrangian}
    \mathcal{L}_\rho(x,y):=f(x)+h(x)+\frac{\rho}{2}\nrm{ \cpos{Ax-b-\frac{y}{\rho}} }^2-\frac{\nrm{y}^2}{2\rho}.
\end{equation}
Under Assumptions \ref{assumption1} and \ref{Slater's}, the strong duality holds for $\mathcal{L}_\rho(x,y)$, that is,
\begin{equation}\label{strongdual}
    \min_{x}\max_{y}\ \mathcal{L}_\rho(x,y)=\max_{y}\min_{x}\ \mathcal{L}_\rho(x,y),
\end{equation}
where both sides of \eqref{strongdual} are equivalent to problem \eqref{prob}.
\end{lemma}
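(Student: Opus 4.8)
The plan is to show that both sides of \eqref{strongdual} equal the optimal value $\Phi^*$ of \eqref{prob}, from which both the strong duality and the equivalence to \eqref{prob} follow at once. The key preliminary is a slack-variable representation of the penalty term: writing $z:=Ax-b$ and completing the square in $s$,
\begin{equation*}
\min_{s\in\KK}\Big[-y^\T(z-s)+\tfrac{\rho}{2}\nrm{z-s}^2\Big]=\tfrac{\rho}{2}\dist{z-\tfrac{y}{\rho},\KK}^2-\tfrac{\nrm{y}^2}{2\rho}=\tfrac{\rho}{2}\sqr{\cpos{z-\tfrac{y}{\rho}}}-\tfrac{\nrm{y}^2}{2\rho},
\end{equation*}
where the last equality uses the Moreau decomposition $w=\mathcal{P}_{\KK}(w)+\cpos{w}$, which yields $\nrm{\cpos{w}}=\dist{w,\KK}$. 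Hence $\mathcal{L}_\rho(x,y)=\min_{s\in\KK}L(x,s,y)$ with $L(x,s,y):=f(x)+h(x)-y^\T(Ax-b-s)+\tfrac{\rho}{2}\nrm{Ax-b-s}^2$, an identity that drives both halves of the argument.

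For the left-hand side I would evaluate $\max_y\mathcal{L}_\rho(x,y)$ in closed form. Substituting $v=z-\tfrac{y}{\rho}$ and applying Moreau ($\sqr{\cpos{v}}=\nrm{v}^2-\sqr{\mathcal{P}_{\KK}(v)}$) reduces the $y$-dependent part to $\tfrac{\rho}{2}\big(2z^\T v-\sqr{\mathcal{P}_{\KK}(v)}-\nrm{z}^2\big)$, maximized over $v\in\RR^m$. If $Ax-b\in\KK$, then for every $y$ one has $\nrm{\cpos{z-y/\rho}}=\dist{z-\tfrac{y}{\rho},\KK}\le\nrm{y/\rho}$ since $z\in\KK$ is a feasible competitor for the projection; this forces $\tfrac{\rho}{2}\sqr{\cpos{z-y/\rho}}-\tfrac{\nrm{y}^2}{2\rho}\le0$, with equality at $y=0$, so $\max_y\mathcal{L}_\rho(x,y)=\Phi(x)$. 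If $Ax-b\notin\KK$, the bipolar theorem supplies a direction $d\in\KKc$ with $z^\T d>0$; taking $v=td$ with $t\to+\infty$ and noting $\mathcal{P}_{\KK}(td)=0$ sends the objective to $+\infty$, so $\max_y\mathcal{L}_\rho(x,y)=+\infty$. Thus $\max_y\mathcal{L}_\rho(x,y)=\Phi(x)+\mathbbm{1}_{\{Ax-b\in\KK\}}(x)$ and $\min_x\max_y\mathcal{L}_\rho(x,y)=\Phi^*$.

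For the right-hand side, set $d_\rho(y):=\min_x\mathcal{L}_\rho(x,y)$. Weak duality gives $\max_y d_\rho(y)\le\min_x\max_y\mathcal{L}_\rho(x,y)=\Phi^*$. For the reverse inequality I would compare $\mathcal{L}_\rho$ with the ordinary Lagrangian $\mathcal{L}(x,y)=f(x)+h(x)-y^\T(Ax-b)$: from the slack identity, for $y\in\KK^*$ and $s\in\KK$ we have $y^\T s\ge0$ and $\tfrac{\rho}{2}\nrm{Ax-b-s}^2\ge0$, so $L(x,s,y)\ge\mathcal{L}(x,y)$ and hence $\mathcal{L}_\rho(x,y)\ge\mathcal{L}(x,y)$ for all $y\in\KK^*$. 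Minimizing over $x$ gives $d_\rho(y)\ge d(y):=\min_x\mathcal{L}(x,y)$ on $\KK^*$, so invoking the standard conic strong duality that holds under Assumptions \ref{assumption1} and \ref{Slater's} (as recalled for \eqref{saddleprob0}) yields $\max_y d_\rho(y)\ge\max_{y\in\KK^*}d(y)=\Phi^*$. Combining the two bounds gives $\max_y d_\rho(y)=\Phi^*$, which completes the proof. The delicate step is the infeasible case of the inner maximization, where one must exhibit an explicit unbounded ascent direction; this is exactly where the bipolar property of $\KK$ and the removal of the constraint $y\in\KK^*$ come into play. The feasible case and the entire right-hand side are then routine given the recalled strong duality, with only mild care needed to handle $\KK=\{0\}$ (where $\KKc=\RR^m$ and $\cpos{\cdot}$ is the identity) uniformly.
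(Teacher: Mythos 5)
Your proof is correct, and it shares the paper's point of departure --- the slack-variable identity $\mathcal{L}_\rho(x,y)=\min_{s\in\KK}L(x,s,y)$, obtained by completing the square and using Moreau's decomposition --- but the two halves of the argument are deduced along a genuinely different route. The paper applies the classical strong duality theorem once, to the reformulated equality-constrained problem \eqref{eqprob} (after checking that Slater's condition transfers to it), which immediately identifies $\max_y\min_x\mathcal{L}_\rho$ with the value of \eqref{prob}; the remaining work \eqref{eqn:lm1-3}--\eqref{eqn:lm1-4} identifies the $\min\max$ side by bounding $\max_y\mathcal{L}_\rho(x,y)$ from below by the ordinary Lagrangian maximum (dropping the nonnegative quadratic and restricting $y$ to $\KK^*$), forcing $+\infty$ when $Ax-b\notin\KK$. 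You instead evaluate the inner maximum $\max_y\mathcal{L}_\rho(x,y)$ in closed form: in the feasible case via the projection inequality $\dist{z-\tfrac{y}{\rho},\KK}\le\nrm{y}/\rho$ (equality at $y=0$), and in the infeasible case via an explicit unbounded ascent direction $v=td$ with $d\in\KKc$, $z^\T d>0$, supplied by the bipolar theorem and exploited through $\mathcal{P}_{\KK}(td)=0$; this gives $\min_x\max_y\mathcal{L}_\rho=\Phi^*$ directly. You then settle the $\max\min$ side by sandwiching --- weak duality from above, and the pointwise comparison $\mathcal{L}_\rho(x,y)\ge\mathcal{L}(x,y)$ for $y\in\KK^*$ from below --- invoking the standard conic strong duality for \eqref{saddleprob0} rather than for \eqref{eqprob}. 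Both deductions are sound, including the uniform treatment of $\KK=\{0\}$. What each approach buys: your explicit ascent direction makes fully rigorous the step the paper only asserts (that $\max_{y\in\KK^*}\,-y^\T(Ax-b)=+\infty$ when $Ax-b\notin\KK$ --- the same bipolar/separation fact in disguise), and your closed-form computation of $\max_y\mathcal{L}_\rho(x,y)=\Phi(x)+\mathbbm{1}_{\{Ax-b\in\KK\}}(x)$ is a reusable identity; the paper's route is more economical in that a single application of strong duality to \eqref{eqprob} simultaneously yields the equality of the two orders of optimization and their identification with \eqref{prob}, whereas you import the recalled duality for the ordinary Lagrangian as an external ingredient --- legitimate here, since the paper records that it holds under Assumptions \ref{assumption1} and \ref{Slater's}, including the polyhedral relaxation.
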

\begin{proof}
	We first consider the case where $\KK$ is a proper cone.
    For any given $\rho>0$, we introduce a slack variable $\pi$ to the conic inequality constraint and consider the following equivalent formulation of \eqref{prob}:
    \begin{equation}\label{eqprob}
        \begin{split}
        \min_{\pi\in-\KK, x}\ &f(x)+h(x) + \frac{\rho}{2}\|Ax-b+\pi\|^2,\\
        \st\ &Ax-b+\pi=0,
        \end{split}
    \end{equation}
whose Lagrangian function equals $\mathcal{L}(x,\pi,y):= f(x)+h(x)-y^\T(Ax-b+\pi)+\frac{\rho}{2}\|Ax-b+\pi\|^2$. 
By Assumption \ref{Slater's}, there exists $x\in \mathrm{relint}\ \mathcal{D}$ and $\pi\in\mathrm{int}\ \KK$ such that $Ax-b+\pi=0$, which indicates that Slater's condition holds for problem \eqref{eqprob}.
Thus, the strong duality implies
\begin{equation}
	\label{eqn:lm1-1}
	\min_{\pi\in-\KK, x}\max_y\,\,\mathcal{L}(x,\pi,y) \,\,=\,\, \max_y\min_{\pi\in-\KK, x}\,\,\mathcal{L}(x,\pi,y).
\end{equation} 
This saddle point problem \eqref{eqn:lm1-1} is equivalent to the original minimization problem \eqref{prob}. Note that for any $x$ and $y$, the minimization over $\pi$ admits a closed form solution:
$$\argmin_{\pi\in-\KK}\ \mathcal{L}(x,\pi,y) = \mathcal{P}_-\Big(Ax-b-\frac{y}{\rho}\Big).$$
Substituting this solution to $\mathcal{L}$ gives the function $\mathcal{L}_\rho(x,y)$ defined by \eqref{lagrangian}. That is, 
\begin{equation}
	\label{eqn:lm1-2}
	\max_y\min_{\pi\in-\KK, x}\,\,\mathcal{L}(x,\pi,y)\,\,=\,\,\max_y\min_{x}\mathcal{L}_\rho(x,y).
\end{equation} 
To complete the proof, it remains to show:
\begin{equation}\label{eqn:lm1-3}
	\min_x\max_{y}\ \mathcal{L}_\rho(x,y)=\min_{\pi\in-\KK,x}\max_{y}\ \mathcal{L}(x,\pi,y).
\end{equation}
First, when $Ax-b\notin\KK$, it is straightforward to see that 
\begin{eqnarray}
	\label{eqn:lm1-4}
	\max_{y} \mathcal{L}_\rho(x,y) & = & \max_{y}\min_{\pi\in-\KK} f(x)+h(x)-y^\T(Ax-b+\pi)+\frac{\rho}{2}\|Ax-b+\pi\|^2 \nonumber\\
	& \geq & \max_{y}\min_{\pi\in-\KK} f(x)+h(x) -y^\T(Ax-b+\pi)\nonumber\\
	& \geq & \max_{y\in\KK^*}\min_{\pi\in-\KK} f(x)+h(x)-y^\T(Ax-b+\pi)\\
	& = & \max_{y\in\KK^*} f(x)+h(x)-y^\T(Ax-b)\nonumber\\
	& = & +\infty = \max_y \cL(x,\pi,y).\nonumber
\end{eqnarray}
Second, when $Ax-b\in\KK$, the closed form solution to \eqref{eqn:lm1-3} can be obtained given any fixed $x$. In details, $y=0$ and $\pi = \cneg{Ax-b}$ are the optimal solution to $\min_{\pi\in-\KK}\max_{y}\ \mathcal{L}(x,\pi,y)$. Thus, combined with \eqref{eqn:lm1-4}, the right hand side of \eqref{eqn:lm1-3} is equivalent to \eqref{prob}. For the left hand side of \eqref{eqn:lm1-3}, the optimal solution of $y$ is $y=0$ when $Ax-b\in\KK$. Thus, combined with \eqref{eqn:lm1-4}, the left hand side of \eqref{eqn:lm1-3} is also equivalent to \eqref{prob}. Therefore, combining \eqref{eqn:lm1-1}-\eqref{eqn:lm1-3} yields the conclusion. 

For the case of $\KK=\{0\}$, it always holds that $\pi=0$ and the conclusion can be proved similarly.
\qed
\end{proof}

\begin{remark}\label{yink*}
    According to the KKT condition of \eqref{prob}, it can be proved that the optimal dual variable $y^*\in \KK^*$.
    Therefore, \eqref{strongdual} still holds when $y$ is restricted to $\KK^*$, that is, 
    \begin{equation}\label{strongdual''}
        \min_{x}\max_{y\in\KK^*}\ \mathcal{L}_\rho(x,y)=\max_{y\in\KK^*}\min_{x}\ \mathcal{L}_\rho(x,y).
    \end{equation}
\end{remark}
As a result of Lemma \ref{lemma1}, solving the original minimization problem \eqref{prob} is equivalent to solving the following saddle point problem:
\begin{equation} \label{saddleprob}
	\min_x\max_y\ \mathcal{L}_\rho(x,y).
\end{equation}

\subsection{A unified primal-dual algorithm framework}
In this section, we propose a unified primal-dual algorithm framework to solve problem \eqref{saddleprob}, as well as problem \eqref{saddleprob0}. For the ease of notation, we define
$$s(y)=
\begin{cases}
	0,&{\rho>0,}\\
	\mathbbm{1}_{\KK^*}(y),&{\rho=0,}
\end{cases} $$
and 
$$\Psi(x,y) =
\begin{cases}
	{f(x)+\frac{\rho}{2}\nrm{ \cpos{Ax-b-\frac{y}{\rho}} }^2-\frac{\nrm{y}^2}{2\rho},} & {\rho>0,}\\
	{f(x)-y^\T(Ax-b),} & {\rho=0.} 
\end{cases}$$
Depending on the value of $\rho$, we can rewrite both problem \eqref{saddleprob0} and \eqref{saddleprob} as 
\begin{equation}\label{minmax}
    \min_x\max_y\ \cL_\rho(x,y):= h(x)+\Psi(x,y)-s(y).
\end{equation}
Let the proximal operator of an arbitrary function $f$ be 
\[\prox_{f}(x):=\argmin_u \{f(u)+\frac12\|x-u\|^2\}.\] Given this notation, we propose the following primal-dual algorithm framework:

\begin{algorithm2e}[H]\label{alg: drs-pf}
	\caption{A unified primal-dual algorithm framework}
	Input: penalty factor $\rho\geq0$; primal and dual step sizes $\tau, \sigma>0$;
	gradient extrapolation coefficients $\alpha\in[0,1]$ and $\beta$;
	the ratio of Gauss-Seidel iteration versus Jacobian iteration $\mu\in[0,1]$. \\
	Initialize: $x^{-1} = x^0, y^{-1} = y^0 = 0$. \\
	\For{$k=0,1,2,\dots$}
	{
	  Compute $g_x^{k} = \nabla_x\Psi(x^k,y^k), g_y^{k} = \nabla_y\Psi(x^k,y^k)$.\\
	  Perform the update:
	  \begin{equation}\label{generalalgo}
		\begin{split}
		x^{k+1} &= \prox_{\tau h}\left[x^k-\tau \left((1+\alpha) g_x^{k}-\alpha g_x^{k-1}\right)\right],\\
		y^{k+1} &= \prox_{\sigma s}\left[y^k+\sigma\mu\left((1+\beta) g_y^{k}-\beta g_y^{k-1}\right)\right. \\
		&\qquad\qquad\quad\left.+\sigma(1-\mu)\left((1+\beta) g_y^{k+1}-\beta g_y^{k}\right)\right],
		\end{split}
	  \end{equation}
	}
\end{algorithm2e}
When $\rho=0$, the gradients $g_x^{k}$ and $g_y^{k}$ are easy to obtain. When $\rho>0$, we have
\begin{align*}
	g_x^k&=\nabla f(x^k)+\rho A^\T\cpos{w^k},\\
	g_y^k&=-\frac{y^k}{\rho}-\cpos{w^k}=-(Ax^k-b)-\cneg{w^k},
\end{align*}
where $w^{k}=Ax^k-b-\frac {y^k}\rho$.
Note that the gradient $g_y^{k+1}=\nabla_y\Psi(x^{k+1},y^{k+1})$ in the $y^{k+1}$ update rule involves $y^{k+1}$ itself if $\rho>0$, the algorithm defined by \eqref{generalalgo} is potentially an implicit scheme. Fortunately, thanks to the fact that we can set $s(y) = 0$ since $y\in\KK^*$ is optional when $\rho>0$, the $y^{k+1}$ update can equivalently be rewritten in an explicit form, which is described as follows.  
\begin{lemma}\label{s0cone}
	Let $s(y)=0$ and $\rho>0$, the dual update rule of \eqref{generalalgo} can be written as $$y^{k+1} = \omega - \frac{\kappa}{\kappa+1}\cdot\cneg{\nu-\omega},$$
	where $\omega = y^k+\sigma\mu\left((1+\beta) g_y^{k}-\beta g_y^{k-1}\right)-\sigma(1-\mu)\left((1+\beta) (Ax^{k+1}-b)+\beta g_y^{k}\right)$, $\kappa = \sigma(1-\mu)(1+\beta)/\rho\geq0$, and
	$\nu = \rho(Ax^{k+1}-b)$.
\end{lemma}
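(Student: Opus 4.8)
The plan is to first observe that when $s(y)=0$ the operator $\prox_{\sigma s}$ reduces to the identity, so the dual update in \eqref{generalalgo} becomes a plain equation in which the only term depending on the unknown $y^{k+1}$ is $g_y^{k+1}=\nabla_y\Psi(x^{k+1},y^{k+1})$. Using the closed form $g_y^{k+1}=-(Ax^{k+1}-b)-\cneg{w^{k+1}}$ with $w^{k+1}=Ax^{k+1}-b-y^{k+1}/\rho$, I would substitute this expression into the update and gather every term that does not involve $y^{k+1}$ into a single vector. A direct rearrangement shows that this gathered quantity is precisely $\omega$, while the leftover piece is $-\sigma(1-\mu)(1+\beta)\cneg{w^{k+1}}$. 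This yields the intermediate identity $y^{k+1}=\omega-\sigma(1-\mu)(1+\beta)\cneg{w^{k+1}}$, which still contains $y^{k+1}$ implicitly inside $w^{k+1}$.

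Next I would turn this into a clean fixed-point equation. Since $\cneg{\cdot}=-\PP_{\KK}(\cdot)$ is positively homogeneous (projection onto a cone commutes with positive scaling) and $\rho>0$, we have $\rho\,\cneg{w^{k+1}}=\cneg{\rho w^{k+1}}=\cneg{\nu-y^{k+1}}$ where $\nu=\rho(Ax^{k+1}-b)$. Recalling that $\kappa=\sigma(1-\mu)(1+\beta)/\rho$, so that $\sigma(1-\mu)(1+\beta)=\kappa\rho$, the update collapses to the implicit relation $y^{k+1}=\omega-\kappa\,\cneg{\nu-y^{k+1}}$.

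The crux is to solve this fixed point explicitly, and this is where the Moreau decomposition enters. Setting $u=\nu-y^{k+1}$ and $c=\nu-\omega$, the relation is equivalent to $u=c-\kappa\,\PP_{\KK}(u)$. I would verify that $u:=c-\tfrac{\kappa}{\kappa+1}\PP_{\KK}(c)$ solves it: decomposing $c=\PP_{\KK}(c)+\cpos{c}$ into its orthogonal $\KK$- and $\KKc$-components gives $u=\tfrac{1}{\kappa+1}\PP_{\KK}(c)+\cpos{c}$, and since $\tfrac{1}{\kappa+1}\PP_{\KK}(c)\in\KK$ and $\cpos{c}\in\KKc$ are still orthogonal, uniqueness of the Moreau decomposition forces $\PP_{\KK}(u)=\tfrac{1}{\kappa+1}\PP_{\KK}(c)$. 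Substituting this back confirms $c-\kappa\,\PP_{\KK}(u)=u$, and translating $u=\nu-y^{k+1}$ back produces exactly $y^{k+1}=\omega-\tfrac{\kappa}{\kappa+1}\cneg{\nu-\omega}$.

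Finally, to confirm that the implicit scheme is well defined I would establish uniqueness by monotonicity: if $u_1,u_2$ both satisfy $u=c-\kappa\,\PP_{\KK}(u)$, then $u_1-u_2=-\kappa(\PP_{\KK}(u_1)-\PP_{\KK}(u_2))$, and pairing with $u_1-u_2$ together with the monotonicity (firm nonexpansiveness) of the projection and $\kappa\geq0$ forces $u_1=u_2$. I expect the main obstacle to be the third step, namely recognizing that the implicit cone projection can be resolved in closed form, i.e.\ that the scaling factor $\tfrac{1}{\kappa+1}$ acts only on the $\KK$-component while the $\KKc$-component is left untouched; once positive homogeneity and the orthogonal decomposition are in place the remaining algebra is routine, and the degenerate case $\kappa=0$ (where $(1-\mu)(1+\beta)=0$ and the scheme is already explicit) is recovered directly from the formula.
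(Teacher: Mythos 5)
Your proposal is correct and follows essentially the same route as the paper: both reduce the dual update (with $\prox_{\sigma s}$ the identity) to the implicit fixed-point relation $y^{k+1}=\omega-\kappa\,\cneg{\nu-y^{k+1}}$ and resolve it via Moreau's decomposition, with the factor $\frac{1}{\kappa+1}$ acting only on the $\KK$-component while the $\KK^\circ$-component is untouched. The only difference is bookkeeping: the paper matches the two orthogonal components of $\nu-\omega$ directly (which gives existence and uniqueness in one stroke), whereas you verify the explicit candidate and then prove uniqueness separately through monotonicity of the projection --- a slightly more spelled-out but not substantively different argument.
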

\begin{proof}
	According to \eqref{generalalgo}, $y^{k+1}$ is the solution to the following equation:
	\[
	y^{k+1} = \omega-\kappa\cneg{\nu-y^{k+1}}.
	\]
	Due to Moreau's decomposition in Theorem \ref{Moreau} in appendix, we have 
	$$\nu-y^{k+1} = \cpos{\nu-y^{k+1}}-\cneg{\nu-y^{k+1}}.$$
	Combining the above two equations gives 
	\[
	\cpos{\nu-y^{k+1}}-(\kappa+1)\cneg{\nu-y^{k+1}}=\nu-\omega,
	\]
	which further indicates that 
	\begin{align*}
		\cpos{\nu-y^{k+1}} = \cpos{\nu-\omega}\quad\mbox{and}\quad 		\cneg{\nu-y^{k+1}} = \frac{\cneg{\nu-\omega}}{\kappa+1}.
	\end{align*}
	Therefore, $y^{k+1} = \omega - \frac{\kappa}{\kappa+1}\cdot\cneg{\nu-\omega}$, which completes the proof.
	\qed
\end{proof}

\begin{remark}\label{rmk:explicit}
    By Remark \ref{yink*}, one can also set $s(y)=\mathbbm{1}_{\KK^*}(y)$ when $\rho>0$. 
    Numerical results show that the performance may benefit from this setting. However, the explicit dual update of \eqref{generalalgo} can only be obtained in specific cases such as $\KK=\{x|x\leq0\}$, see Lemma \ref{xleq0} in the appendix.
    Otherwise, we can utilize the following explicit variant of the framework in practice:
    \begin{equation}\label{generalalgo-explicit}
        \begin{split}
        x^{k+1} &= \prox_{\tau h}\left[x^k-\tau \left((1+\alpha) g_x^{k}-\alpha g_x^{k-1}\right)\right],\\
        y^{k+1} &= \prox_{\sigma s}\left[y^k+\sigma\mu\left((1+\beta) g_y^{k}-\beta g_y^{k-1}\right)\right. \\
        &\qquad\qquad\quad\left.+\sigma(1-\mu)\left((1+\beta) g_y^{k+1,k}-\beta g_y^{k}\right)\right],
        \end{split}
    \end{equation}
    where $g_y^{k+1,k}:= \nabla_y \Psi(x^{k+1},y^k)$.
    It is worth mentioning that the implicit scheme \eqref{generalalgo} and the explicit scheme \eqref{generalalgo-explicit} are equivalent when $\KK=\{0\}$ or $\rho=0$.
\end{remark}

\subsection{Consequences of the unified framework}
In the unified primal-dual algorithm framework \eqref{generalalgo}, one is allowed to choose a wide range of algorithmic parameters. In fact, under appropriate parameters, our framework not only covers several well-known primal-dual algorithms such as PDHG, Chambolle-Pock, GDA, OGDA and linearized ALM, but also leads to a new efficient algorithm which we call SOGDA. Next, let us discuss the detailed consequences of our framework under different parameter specifications. \vspace{0.2cm}

\noindent\textbf{SOGDA} \,\, Let us set $\mu=1, \alpha=0, \beta=1, \rho\geq0$ in \eqref{generalalgo}, the scheme becomes a new algorithm which we call Semi-OGDA (SOGDA):
\begin{align*}
	x^{k+1} &= \prox_{\tau h}\left[x^k-\tau g_x^{k}\right],\\
	y^{k+1} &= \prox_{\sigma s}\left[y^k+\sigma\left(2 g_y^{k}- g_y^{k-1}\right) \right].  
\end{align*}
When $\KK=\{0\}$ {and} $\rho=0$, this method can be interpreted as the OGDA method with only dual variables extrapolated or a Jacobian variant of the Chambolle-Pock method.\vspace{0.2cm}

\noindent\textbf{PDHG and Chembolle-Pock}\,\, Now let us consider the Gauss-Seidel iteration without primal extrapolation, which corresponds to $\mu=\alpha=0$. If we further remove the dual extrapolation by setting  $\beta=0$, then we obtain the following scheme:
\begin{equation}\label{pdhgscheme}
	\begin{split}
		x^{k+1} &= \prox_{\tau h}\left[x^{k}-\tau g_x^{k}\right],\\
		y^{k+1} &= \prox_{\sigma s}\left[y^{k}+\sigma g_y^{k+1} \right].
	\end{split}
\end{equation}
Alternatively, if we allow the dual extrapolation and set $\beta=1$, the scheme becomes
\begin{equation}\label{cpscheme}
	\begin{split}
		x^{k+1} &= \prox_{\tau h}\left[x^k-\tau g_x^{k}\right],\\
		y^{k+1} &= \prox_{\sigma s}\left[y^k+\sigma\left(2 g_y^{k+1}- g_y^{k}\right) \right].
	\end{split}
\end{equation}
When $\KK=\{0\}$ {and} $\rho=0$, the schemes \eqref{pdhgscheme} and \eqref{cpscheme} are PDHG and Chambolle-Pock, respectively. Therefore, \eqref{pdhgscheme} and \eqref{cpscheme} can be viewed as the natural extensions of PDHG and Chambolle-Pock to the conic inequality constrained problem with an augmented penalty term. \vspace{0.2cm}

\noindent\textbf{GDA and OGDA}\,\, If we consider the algorithms of Jacobian iteration, then we can set $\mu=1$. In addition, if we omit both primal and dual extrapolation by setting $\alpha=\beta=0$, then \eqref{generalalgo} becomes
\begin{equation}\label{gdascheme}
	\begin{split}
		x^{k+1} &= \prox_{\tau h}\left[x^k-\tau g_x^{k}\right],\\
		y^{k+1} &= \prox_{\sigma s}\left[y^k+\sigma g_y^{k}\right].
	\end{split}
\end{equation}
Alternatively, if we allow primal and dual extrapolation with $\alpha=\beta=1$, we obtain
\begin{equation}\label{ogdascheme}
	\begin{split}
		x^{k+1} &= \prox_{\tau h}\left[x^k-\tau \left(2 g_x^{k}-g_x^{k-1}\right)\right],\\
		y^{k+1} &= \prox_{\sigma s}\left[y^k+\sigma\left(2 g_y^{k}- g_y^{k-1}\right) \right].
	\end{split}
\end{equation}
The scheme \eqref{gdascheme} is GDA and the scheme \eqref{ogdascheme} is a proximal variant of OGDA. Since the existing works of OGDA mainly focus on differentiable problems, \eqref{ogdascheme} is a direct extension of OGDA on the non-differentiable saddle point problems. \vspace{0.2cm}

\noindent\textbf{Linearized ALM and Jacobian linearized ADMM}\,\,  Lastly, if we further require $\rho>0$ and $\KK=\{0\}$ in the scheme \eqref{pdhgscheme}, it becomes the linearized ALM \cite{yang2013linearized}. If the primal variable can be split into multiple blocks in addition, then algorithm \eqref{pdhgscheme} can also be viewed as Jacobian linearized ADMM proposed in \cite{deng2017parallel}.

\section{The ergodic convergence}\label{sec:conv}
\subsection{Preliminary result}

Note that the general scheme \eqref{generalalgo} covers a wide range of primal-dual algorithms. In this section, we establish an $\mathcal{O}(1/N)$ ergodic convergence for \eqref{generalalgo} with a unified analysis. 
Note that for any $(\bar x,\bar y)$, due to \eqref{eqn:lm1-4}, the duality gap for problem \eqref{saddleprob} remains $+\infty$ whenever  $A\bar{x}-b\notin\KK$:
\begin{equation}\label{dualgap}
	\max_{y\in\mathcal{Y}}\ \cL_\rho(\bar{x},y)-\min_{x\in\mathcal{X}}\ \cL_\rho(x,\bar{y})=+\infty.
\end{equation} 
Therefore, the duality gap is not a reasonable measure of the convergence for the conic constrained problems, which invalidates most existing analysis for general saddle point algorithms that establish their convergence in terms of the duality gap. In this paper, we carefully utilize the structure of the original minimization problem and give an error bound in terms of objective function gap and the constraint violation. First of all, let us provide a lemma that controls the objective function gap and the constraint violation separately by bounding their weighted summation.

 \begin{lemma}\label{lemma:sep-gap}
    {Suppose that Assumptions \ref{assumption1}, \ref{assumption3} and \ref{Slater's} are satisfied.}
    Let $(x^*,y^*)$ be a pair of primal and dual optimal solution of problem \eqref{prob}. For any positive constant $\gamma>\nrm{y^*}$, if the following inequality holds
    \begin{align*}
        \Phi(x)-\Phi(x^*)+\gamma\nrm{\cpos{Ax-b}}\leq\delta,
    \end{align*}
    then it holds that
    \[\begin{aligned}
    -\frac{\nrm{y^*}}{\gamma-\nrm{y^*}}\delta\leq \Phi(x)-\Phi(x^*)\leq \delta, \qquad
    \nrm{\cpos{Ax-b}}\leq \frac{\delta}{\gamma-\nrm{y^*}}.
    \end{aligned}\]
\end{lemma}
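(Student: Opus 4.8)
The plan is to establish the two-sided bound on $\Phi(x)-\Phi(x^*)$ and the constraint violation bound by exploiting the definition of the optimal dual multiplier $y^*$ together with weak duality. The crucial tool will be the inequality that relates $\Phi(x)$ to the Lagrangian evaluated at $y^*$, namely that for any feasible or infeasible $x$, the saddle point property of $(x^*,y^*)$ for problem \eqref{prob} guarantees
\[
\Phi(x^*)=\min_{x'}\Big\{\Phi(x')-\langle y^*, Ax'-b\rangle\Big\}+\langle y^*, Ax^*-b\rangle.
\]
Since $y^*\in\KK^*$ (by Remark \ref{yink*}) and $Ax^*-b\in\KK$, complementary slackness forces $\langle y^*, Ax^*-b\rangle=0$, so that $\Phi(x^*)\leq \Phi(x)-\langle y^*, Ax-b\rangle$ for every $x$. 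Rearranging gives the fundamental lower bound
\[
\Phi(x)-\Phi(x^*)\geq \langle y^*, Ax-b\rangle.
\]

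First I would bound the inner product $\langle y^*, Ax-b\rangle$ from below. Writing $Ax-b=\cpos{Ax-b}-\cneg{Ax-b}$ via Moreau's decomposition (Theorem \ref{Moreau}), and using that $y^*\in\KK^*$ together with $\cneg{Ax-b}\in\KK$ (so $\langle y^*, -\cneg{Ax-b}\rangle\geq 0$), I obtain $\langle y^*, Ax-b\rangle\geq \langle y^*, \cpos{Ax-b}\rangle\geq -\nrm{y^*}\cdot\nrm{\cpos{Ax-b}}$ by Cauchy--Schwarz. Substituting this into the fundamental lower bound yields
\[
\Phi(x)-\Phi(x^*)\geq -\nrm{y^*}\cdot\nrm{\cpos{Ax-b}}.
\]

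Next I would combine this with the hypothesis $\Phi(x)-\Phi(x^*)+\gamma\nrm{\cpos{Ax-b}}\leq\delta$. Denoting $v:=\nrm{\cpos{Ax-b}}\geq 0$ and $g:=\Phi(x)-\Phi(x^*)$, the hypothesis reads $g+\gamma v\leq\delta$ and the derived bound reads $g\geq -\nrm{y^*}\,v$. From these, $(\gamma-\nrm{y^*})v\leq g+\gamma v\leq\delta$, and since $\gamma>\nrm{y^*}$ the constraint-violation bound $v\leq \delta/(\gamma-\nrm{y^*})$ follows immediately. For the objective gap, the upper bound $g\leq\delta$ is immediate from $g+\gamma v\leq\delta$ and $v\geq 0$; the lower bound comes from $g\geq -\nrm{y^*}\,v\geq -\nrm{y^*}\cdot\delta/(\gamma-\nrm{y^*})$, which is exactly the claimed estimate.

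The only genuine subtlety, and the step I would treat most carefully, is justifying the fundamental lower bound $\Phi(x)-\Phi(x^*)\geq\langle y^*, Ax-b\rangle$ under the stated assumptions. This rests on the existence of a KKT multiplier $y^*\in\KK^*$ with complementary slackness, which is supplied by Slater's condition (Assumption \ref{Slater's}) and attainability (Assumption \ref{assumption3}); the saddle point inequality for the ordinary Lagrangian $\mathcal{L}(x,y)$ of \eqref{saddleprob0} gives $\mathcal{L}(x^*,y^*)\leq\mathcal{L}(x,y^*)$ for all $x$, which is precisely the inequality needed. Everything else is elementary manipulation of scalar inequalities, so I expect no further obstacles.
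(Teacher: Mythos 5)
Your proposal is correct and follows essentially the same route as the paper's proof: both reduce the lemma to the key bound $\Phi(x)-\Phi(x^*)\geq \iprod{y^*}{Ax-b}\geq -\nrm{y^*}\nrm{\cpos{Ax-b}}$, obtained from the KKT/saddle-point property of $(x^*,y^*)$ (equivalently $A^\T y^*\in\partial\Phi(x^*)$), complementary slackness $\iprod{y^*}{Ax^*-b}=0$, Moreau's decomposition, and Cauchy--Schwarz, after which the stated estimates follow by the same elementary scalar manipulations. One minor sign slip in your parenthetical: with the paper's convention $\cneg{u}=-\mathcal{P}_{\KK}(u)$ one has $\cneg{Ax-b}\in-\KK$, i.e.\ $-\cneg{Ax-b}=\mathcal{P}_{\KK}(Ax-b)\in\KK$, which is precisely what gives the inequality $\iprod{y^*}{-\cneg{Ax-b}}\geq 0$ that you actually use.
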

\begin{proof} 
First, by direct computation, we have
\begin{align*}
&\Phi(x)-\Phi(x^*)+\gamma\nrm{\cpos{Ax-b}}\\
\stackrel{(i)}{\geq} & \left(\gamma-\nrm{y^*}\right)\nrm{\cpos{Ax-b}}+\Phi(x)-\Phi(x^*)-\iprod{y^*}{\cpos{Ax-b}}\\
=& \left(\gamma-\nrm{y^*}\right)\nrm{\cpos{Ax-b}}+\Phi(x)-\Phi(x^*)-\iprod{y^*}{Ax-b}\\
&-\iprod{y^*}{\cpos{Ax-b}-(Ax-b)}\\
\stackrel{(ii)}{\geq} & \left(\gamma-\nrm{y^*}\right)\nrm{\cpos{Ax-b}}+\Phi(x)-\Phi(x^*)-\iprod{A^\T y^*}{x-x^*}\\
\stackrel{(iii)}{\geq}& \left(\gamma-\nrm{y^*}\right)\nrm{\cpos{Ax-b}},
\end{align*}
where (i) is due to the Cauchy-Schwarz inequality, 
(ii) uses the fact that $\cpos{u}-u\in-\KK$, $y^*\in\KK^*$ and $\iprod{y^*}{Ax^*-b}=0$,
(iii) comes from the fact that $A^\T y^*\in \partial \Phi(x^*)$ and the convexity of $\Phi$, that is,  $\Phi(x)\geq \Phi(x^*)+\iprod{\partial \Phi(x^*)}{x-x^*}.$
Therefore, as long as $\gamma>\nrm{y^*}$, it holds that $\nrm{\cpos{Ax-b}}\leq \frac{\delta}{\gamma-\nrm{y^*}}$, and hence
\begin{align*}
    \Phi(x)-\Phi(x^*)\geq \iprod{y^*}{Ax-b} \geq -\nrm{y^*}\nrm{\cpos{Ax-b}}
    \geq -\frac{\nrm{y^*}}{\gamma-\nrm{y^*}}\delta,
\end{align*}
which completes the proof of this lemma. \qed
\end{proof}

Let $z^k=[x^k;y^k]\in\RR^{m+n}$ be the iterates generated by the general primal-dual method given in \eqref{generalalgo}.
Define the operator $F:\mathbb{R}^{m+n}\rightarrow\mathbb{R}^{m+n}$ as
\begin{equation}\label{def:F}
    F(z)=[\nabla_x\Psi(z);-\nabla_y\Psi(z)].
\end{equation}
Define the function $R(z):=h(x)+s(y)$ and the scaling matrices 
\[
\Lambda=\left[\begin{array}{ll}
\tau I_n &  \\
& \sigma I_m\\
\end{array}\right],
\qquad
\Theta=\left[\begin{array}{ll}
\alpha I_n &  \\
& \beta I_m\\
\end{array}\right],
\qquad
\Xi=\left[\begin{array}{ll}
I_n &  \\
& \mu I_m\\
\end{array}\right].
\]
Then our unified primal-dual algorithm framework \eqref{generalalgo} can be rewritten as
\begin{align*}
    z^{k+1}=\prox_{\Lambda R}\big(
        z^{k}&-\Lambda\Xi\left[\left(I+\Theta\right)F(z^k)-\Theta F(z^{k-1})\right]\\
        &-\Lambda(I-\Xi)\left[\left(I+\Theta\right)F(z^{k+1})-\Theta F(z^{k})\right]
    \big).
\end{align*}
In the next lemma, we characterize the one-step behavior of this algorithm.
\begin{lemma}\label{lemma:one-step}
    Under Assumption \ref{assumption1}, for any $z$, it holds that  
    \begin{eqnarray}\label{onehand**}
    	&&R(z^{k+1})-R(z)+\left\langle F(z^{k+1}), z^{k+1}-z\right\rangle\nonumber\\
    	&\leq\ & \frac1{2}\|z^k-z\|^2_{\Lambda^{-1}}-\frac1{2}\|z^{k+1}-z\|^2_{\Lambda^{-1}}-\frac1{2}\nrm{ z^{k+1}-z^{k} }^2_{\Lambda^{-1}}\\
    	&&+\! \iprod{ F(z^k)-F(z^{k-1}) }{ z-z^{k} }_{\Xi\Theta} - \iprod{ F(z^{k+1})-F(z^k) }{ z-z^{k+1} }_{\Xi\Theta} \nonumber\\
    	&&+\! \iprod{F(z^{k})\!-\!F(z^{k-1})}{z^k\!-\!z^{k+1} }_{\!\Xi\Theta}+\langle F(z^{k+1})-F(z^k), z-z^{k+1}\rangle_{\Theta-\Xi}.\nonumber
    \end{eqnarray}  
\end{lemma}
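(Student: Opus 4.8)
The plan is to derive the entire bound from the first-order optimality condition of the proximal update together with the convexity of $R$; no monotonicity of $F$ is needed, so the claimed inequality is really the subgradient inequality for $R$ plus an \emph{exact} algebraic identity in the $F$-terms. First I would record the optimality condition. Writing the compact update as $z^{k+1}=\prox_{\Lambda R}(z^k-\Lambda g^{k+1})$ with
\[
g^{k+1}:=\Xi\big[(I+\Theta)F(z^k)-\Theta F(z^{k-1})\big]+(I-\Xi)\big[(I+\Theta)F(z^{k+1})-\Theta F(z^k)\big],
\]
and reading $\prox_{\Lambda R}$ in the $\Lambda^{-1}$-weighted norm (this is the convention that reproduces the blockwise updates \eqref{generalalgo}, since the $x$-block carries weight $1/\tau$ and the $y$-block carries weight $1/\sigma$), the definition of the minimizer gives $-\Lambda^{-1}(z^{k+1}-z^k)-g^{k+1}\in\partial R(z^{k+1})$. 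Since $R(z)=h(x)+s(y)$ is convex under Assumption \ref{assumption1}, the subgradient inequality yields, for every $z$,
\[
R(z^{k+1})-R(z)\le \iprod{\Lambda^{-1}(z^{k+1}-z^k)+g^{k+1}}{z-z^{k+1}}.
\]

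Next I would split the right-hand side. For the quadratic piece I invoke the three-point (law-of-cosines) identity
\[
\iprod{\Lambda^{-1}(z^{k+1}-z^k)}{z-z^{k+1}}=\tfrac12\nrm{z-z^k}^2_{\Lambda^{-1}}-\tfrac12\nrm{z-z^{k+1}}^2_{\Lambda^{-1}}-\tfrac12\nrm{z^{k+1}-z^k}^2_{\Lambda^{-1}},
\]
which reproduces the first line of \eqref{onehand**}. Adding $\iprod{F(z^{k+1})}{z^{k+1}-z}$ to both sides and using $\iprod{F(z^{k+1})}{z^{k+1}-z}=-\iprod{F(z^{k+1})}{z-z^{k+1}}$, the leftover gradient contribution becomes $\iprod{g^{k+1}-F(z^{k+1})}{z-z^{k+1}}$. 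It then remains to prove that this single inner product equals the last three lines of \eqref{onehand**}.

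The real work is this final coefficient-matching. Expanding $g^{k+1}-F(z^{k+1})$ and grouping by $F(z^{k-1}),F(z^k),F(z^{k+1})$ — using that the diagonal matrices $\Xi,\Theta$ commute — I expect the operator weights $-\Xi\Theta$, $\Xi+2\Xi\Theta-\Theta$, and $\Theta-\Xi-\Xi\Theta$, respectively. On the target side, writing $\Delta^k:=F(z^k)-F(z^{k-1})$ and $\Delta^{k+1}:=F(z^{k+1})-F(z^k)$, I first merge the two $\Xi\Theta$-weighted $\Delta^k$ terms via $\iprod{\Delta^k}{z-z^k}_{\Xi\Theta}+\iprod{\Delta^k}{z^k-z^{k+1}}_{\Xi\Theta}=\iprod{\Delta^k}{z-z^{k+1}}_{\Xi\Theta}$, and then combine the two $\Delta^{k+1}$ contributions (weights $-\Xi\Theta$ and $\Theta-\Xi$) into weight $\Theta-\Xi-\Xi\Theta$; this collapses the three lines to $\iprod{\Xi\Theta\,\Delta^k+(\Theta-\Xi-\Xi\Theta)\Delta^{k+1}}{z-z^{k+1}}$, whose re-expansion in the three $F$-values gives exactly the same weights as $g^{k+1}-F(z^{k+1})$. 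Hence the two expressions coincide and the bound closes. The main obstacle is precisely this telescoping and weight-tracking: it is elementary but error-prone, and in particular the sign on the $(\Theta-\Xi)$-weighted term governing the $\Delta^{k+1}$ cancellation must be handled carefully for the identity to be exact.
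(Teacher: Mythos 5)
Your proposal is correct and follows essentially the same route as the paper: the proximal optimality condition $-\Lambda^{-1}(z^{k+1}-z^k)-g^{k+1}\in\partial R(z^{k+1})$, the convexity (subgradient) inequality for $R$, the three-point identity for the $\Lambda^{-1}$-weighted quadratic term, and an exact algebraic rearrangement of the $F$-terms — your operator weights $-\Xi\Theta$, $\Xi+2\Xi\Theta-\Theta$, and $\Theta-\Xi-\Xi\Theta$ for $g^{k+1}-F(z^{k+1})$ check out and match the paper's grouping in \eqref{onehand*}. The only difference is presentational (you verify the identity by coefficient-matching on $F(z^{k-1}),F(z^k),F(z^{k+1})$, while the paper expands directly and splits $\iprod{F(z^k)-F(z^{k-1})}{z-z^{k+1}}_{\Xi\Theta}$ across $z-z^k$ and $z^k-z^{k+1}$), which is immaterial.
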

\begin{proof}
    By the optimality condition of the proximal subproblem in \eqref{generalalgo}, there exists a vector $ v^{k+1}\in\partial R(z^{k+1})$  such that
    \begin{equation}\label{subgrad*}
        \begin{split}
            0=v^{k+1}+\Lambda^{-1}(z^{k+1}-z^k)
            &+\Xi\left[\left(I+\Theta\right)F(z^k)-\Theta F(z^{k-1})\right]\\
            &+(I-\Xi)\left[\left(I+\Theta\right)F(z^{k+1})-\Theta F(z^{k})\right].
        \end{split}
    \end{equation}
Taking inner product between \eqref{subgrad*} and $z^{k+1}-z$ and then rearranging the terms, we get
\begin{eqnarray}
	\label{onehand*}
	&&\langle v^{k+1}, z^{k+1}-z\rangle+\langle F(z^{k+1}), z^{k+1}-z\rangle\\
	&=& \langle z^{k+1}-z^k,z-z^{k+1}\rangle_{\Lambda^{-1}}+\langle F(z^k)-F(z^{k-1}), z-z^{k+1}\rangle_{\Xi\Theta}\nonumber\\
	& & -\langle F(z^{k+1})-F(z^k), z-z^{k+1}\rangle_{\Xi\Theta}+\langle F(z^{k+1})-F(z^k), z-z^{k+1}\rangle_{\Theta-\Xi}\nonumber\\
	& = & \frac1{2}\|z^k-z\|^2_{\Lambda^{-1}}-\frac1{2}\|z^{k+1}-z\|^2_{\Lambda^{-1}}-\frac1{2}\nrm{ z^{k+1}-z^{k} }^2_{\Lambda^{-1}}\nonumber\\
	& & +\langle F(z^k)-F(z^{k-1}), z-z^{k+1}\rangle_{\Xi\Theta} -\langle F(z^{k+1})-F(z^k), z-z^{k+1}\rangle_{\Xi\Theta}\nonumber\\
	& & +\langle F(z^{k+1})-F(z^k), z-z^{k+1}\rangle_{\Theta-\Xi}\nonumber,
\end{eqnarray} 
where the last equality is because
\begin{equation*}
    \langle z^{k+1}-z^k,z-z^{k+1}\rangle_{\Lambda^{-1}} = \frac1{2}\|z^k-z\|^2_{\Lambda^{-1}}-\frac1{2}\|z^{k+1}-z\|^2_{\Lambda^{-1}}-\frac1{2}\nrm{ z^{k+1}-z^{k} }^2_{\Lambda^{-1}}.
\end{equation*}
Substituting  $R(z^{k+1})-R(z)\leq\langle v^{k+1}, z^{k+1}-z\rangle$ into \eqref{onehand*} proves the lemma. \qed
\end{proof}
Inspired by the above ``one-step descent'', we consider the following potential function constructed for an arbitrary reference point $z$:
\begin{equation}\label{def:Delta}
    \begin{split}
        \Delta_k(z) :=& \frac1{2}\|z^k-z\|^2_{\Lambda^{-1}}+\frac{c}{2}\|z^{k}-z^{k-1}\|^2_{\Lambda^{-1}}\\
        &+\langle F(z^k)-F(z^{k-1}), z-z^{k}\rangle_{\Xi\Theta}+(\mu-\beta)\iprod{ \nabla_y\Psi(z^k)}{ y^{k}-y },
    \end{split}
\end{equation}
where 
$c$ is a positive constant depending on $\alpha, \beta, \mu, \tau, \sigma, \rho$, which shall be specified later.

\subsection{Ergodic convergence for the affine equality constrained problem}\label{sec:equality}
In the following analysis, we start with the case $\KK=\{0\}$, whose analysis is cleaner and more insightful compared to the general case.
In this case, we specify the weight $c$ in the potential as
$$
c=C^{\mathrm{affine}}_{\alpha,\beta,\mu}(\tau, \sigma, \rho):=\alpha\tau  L_{f_{\rho}}+\max\{\abs{\mu\beta}\sqrt{\sigma\tau}\nrm{A}, \alpha \sqrt{\sigma\tau}\nrm{A}\}.
$$
Actually any $c\geq C^{\mathrm{affine}}_{\alpha,\beta,\mu}(\tau, \sigma, \rho)$ suffices.
Given any coefficient $c$, we define the matrix $P_c$ as
\[\begin{aligned}
	P_c:=\begin{bmatrix}
		\rho I_m  & 0_{m\times n} & \frac{1-\alpha-\beta+\mu}{2}I_m \\
		0_{n\times m} & \left(\frac{1-2c}{2\tau}-\frac{(1-\alpha)L_{f_{\rho}}}{2}\right)I_n & \frac{\beta-\mu}{2}A^\T \\
		\frac{1-\alpha-\beta+\mu}{2}I_m & \frac{\beta-\mu}{2}A & \frac{1-2c}{2\sigma}I_m
	\end{bmatrix}.
\end{aligned}\]
As long as $P_c\succeq 0$, we derive the following lemma to characterize the monotonicity of the potential function.

\begin{lemma}\label{lemma:eq}
	Suppose Assumptions \ref{assumption1}-\ref{Slater's} hold and let $\KK = \{0\}$ so that \eqref{prob} is an affine equality constrained problem. 
    Define $\tilde{z} = (x^*,y)$, $\forall y\in\mathbbm{R}^m$, where $x^*$ be a primal optimal solution.
    Then as long as $0\leq\alpha\leq1$ and $P_c\succeq 0$, we have
	\begin{align}\label{onestep}
		\Delta_{k}(\tilde{z})-\Delta_{k+1}(\tilde{z})\geq\ &\Phi(x^{k+1})-\Phi(x^*)-\langle Ax^{k+1}-b,y\rangle
		+\delta^{k}, 
	\end{align}
where $\delta^k\geq0$ is some non-negative term.  
\end{lemma}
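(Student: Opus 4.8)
The plan is to feed the reference point $\tilde z=(x^*,y)$ into the one-step estimate of Lemma \ref{lemma:one-step} and then reorganize both sides: the left-hand side into the target gap $\Phi(x^{k+1})-\Phi(x^*)-\langle Ax^{k+1}-b,y\rangle$ plus nonnegative slack, and the right-hand side into $\Delta_k(\tilde z)-\Delta_{k+1}(\tilde z)$ minus a residual that $P_c\succeq 0$ forces to be nonnegative. Throughout I write $D_{f_\rho}(u;v):=f_\rho(u)-f_\rho(v)-\langle\nabla f_\rho(v),u-v\rangle\geq0$ for the Bregman divergence of $f_\rho$.

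First I would evaluate the left-hand side of \eqref{onehand**} exactly. With $\KK=\{0\}$ we have $R(z)=h(x)$, $\nabla_x\Psi=\nabla f_\rho(x)-A^\T y$ and $\nabla_y\Psi=-(Ax-b)$, and since $Ax^*=b$ the multiplier cross terms telescope so that $R(z^{k+1})-R(\tilde z)+\langle F(z^{k+1}),z^{k+1}-\tilde z\rangle$ collapses to $h(x^{k+1})-h(x^*)+\langle\nabla f_\rho(x^{k+1}),x^{k+1}-x^*\rangle-\langle Ax^{k+1}-b,y\rangle$. Writing $\langle\nabla f_\rho(x^{k+1}),x^{k+1}-x^*\rangle=D_{f_\rho}(x^*;x^{k+1})+f_\rho(x^{k+1})-f_\rho(x^*)$ and using $f_\rho=f+\tfrac\rho2\|A\cdot-b\|^2$ together with $Ax^*=b$, this equals $\Phi(x^{k+1})-\Phi(x^*)-\langle Ax^{k+1}-b,y\rangle+D_{f_\rho}(x^*;x^{k+1})+\tfrac\rho2\|Ax^{k+1}-b\|^2$, the last two summands being nonnegative.

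Second I would subtract the definition \eqref{def:Delta} termwise to rewrite the right-hand side of \eqref{onehand**} as $\Delta_k(\tilde z)-\Delta_{k+1}(\tilde z)$ minus a residual. The quadratic part $\tfrac12\|z^k-\tilde z\|^2_{\Lambda^{-1}}-\tfrac12\|z^{k+1}-\tilde z\|^2_{\Lambda^{-1}}$ and the $\Xi\Theta$-weighted telescoping inner products match the corresponding pieces of the potential difference exactly; the leftover consists of $\tfrac c2(\|z^{k+1}-z^k\|^2_{\Lambda^{-1}}-\|z^k-z^{k-1}\|^2_{\Lambda^{-1}})$, the $-\tfrac12\|z^{k+1}-z^k\|^2_{\Lambda^{-1}}$ term, the index-$(k,k-1)$ inner product weighted by $\Xi\Theta$, the $(\Theta-\Xi)$-weighted inner product, and the $(\mu-\beta)$ correction. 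The role of the term $(\mu-\beta)\langle\nabla_y\Psi(z^k),y^k-y\rangle$ in $\Delta$ is precisely to cancel the reference multiplier $y$ appearing in the $(\Theta-\Xi)$ term: combining it with the $y$-block of $\langle F(z^{k+1})-F(z^k),\tilde z-z^{k+1}\rangle_{\Theta-\Xi}$ leaves the reference-free quantity $(\beta-\mu)\langle Ax^k-b,y^{k+1}-y^k\rangle$.

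The remaining obstacle, and the heart of the proof, is to show this residual is dominated. The $x$-block of the $(\Theta-\Xi)$ term still carries $x^*$, namely $(1-\alpha)\langle\nabla f_\rho(x^{k+1})-\nabla f_\rho(x^k),x^{k+1}-x^*\rangle$; I would dispose of it via the three-point Bregman identity, which rewrites it as $(1-\alpha)\big[D_{f_\rho}(x^*;x^{k+1})-D_{f_\rho}(x^*;x^k)+D_{f_\rho}(x^{k+1};x^k)\big]$. Since $0\leq\alpha\leq1$, this telescoped contribution combines with the slack $D_{f_\rho}(x^*;x^{k+1})$ produced in the first step into the nonnegative quantity $\alpha D_{f_\rho}(x^*;x^{k+1})+(1-\alpha)D_{f_\rho}(x^*;x^k)$, which I simply drop, while the smoothness bound $(1-\alpha)D_{f_\rho}(x^{k+1};x^k)\leq\tfrac{(1-\alpha)L_{f_\rho}}2\|x^{k+1}-x^k\|^2$ is exactly what the $-\tfrac{(1-\alpha)L_{f_\rho}}2$ entry of the $(2,2)$ block of $P_c$ absorbs. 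All cross-index terms remaining — the $\nabla f_\rho$ increment against $x^k-x^{k+1}$ and the bilinear couplings $\langle A(x^k-x^{k-1}),\cdot\rangle$ and $\langle A^\T(y^k-y^{k-1}),\cdot\rangle$ — are split by Young's inequality, their index-$(k,k-1)$ halves being swallowed by $-\tfrac c2\|z^k-z^{k-1}\|^2_{\Lambda^{-1}}$; the choice $c=C^{\mathrm{affine}}_{\alpha,\beta,\mu}(\tau,\sigma,\rho)$ is calibrated so that the $\alpha\tau L_{f_\rho}$ and $\sqrt{\sigma\tau}\|A\|$ contributions are exactly covered. After these reductions the surviving same-index terms assemble into $\langle v,P_c v\rangle$ with $v=[\,Ax^{k+1}-b;\ x^{k+1}-x^k;\ y^{k+1}-y^k\,]$, so $P_c\succeq0$ yields \eqref{onestep} with $\delta^k$ equal to this quadratic form plus the discarded nonnegative Bregman terms. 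I expect the most delicate bookkeeping to be verifying that the Young weights balance against the off-diagonal blocks $\tfrac{\beta-\mu}2 A$ and $\tfrac{1-\alpha-\beta+\mu}2 I_m$ so that the residual reproduces $P_c$ entrywise.
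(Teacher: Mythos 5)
Your proposal follows the paper's proof essentially step for step: the same substitution of $\tilde z=(x^*,y)$ into Lemma \ref{lemma:one-step}, the same reorganization of the resulting inequality into the potential difference (the paper's $\Gamma_1,\Gamma_2,\Gamma_3$ in \eqref{def:Gamma}), the same Young-type splitting of the cross-index terms with $c=C^{\mathrm{affine}}_{\alpha,\beta,\mu}(\tau,\sigma,\rho)$ calibrated exactly as in the paper's bound \eqref{Fz}, and the same final quadratic form in $[Ax^{k+1}-b;\,x^{k+1}-x^k;\,y^{k+1}-y^k]$. Your three-point Bregman identity is an exact repackaging of the paper's \eqref{convex-smooth}: discarding $\alpha D_{f_\rho}(x^*;x^{k+1})+(1-\alpha)D_{f_\rho}(x^*;x^k)$ and bounding $(1-\alpha)D_{f_\rho}(x^{k+1};x^k)\le\frac{(1-\alpha)L_{f_\rho}}{2}\nrm{x^{k+1}-x^k}^2$ is literally what the $\alpha$-convex combination of the two inequalities in \eqref{convex-smooth} yields. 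Likewise, your cancellation of the reference multiplier $y$ via the $(\mu-\beta)$ correction in \eqref{def:Delta} is exactly how the paper passes from \eqref{onehand**} to \eqref{onestepxstar}.

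The one place where your bookkeeping and the paper's diverge is the point you yourself flagged as delicate: the residual does \emph{not} reproduce $P_c$ entrywise. Your accounting produces $\frac\rho2\nrm{Ax^{k+1}-b}^2$ (since $f_\rho(x^{k+1})-f_\rho(x^*)=f(x^{k+1})-f(x^*)+\frac\rho2\nrm{Ax^{k+1}-b}^2$ and you drop the Bregman slack), whereas the paper's $\delta_2^k$ in \eqref{eqn:desc} --- and hence the $(1,1)$ block of $P_c$ --- carries coefficient $\rho$. Retaining the slack only recovers $\frac{(1+\alpha)\rho}{2}$, via $\alpha D_{f_\rho}(x^*;x^{k+1})\geq\alpha\frac\rho2\nrm{Ax^{k+1}-b}^2$, so for $\alpha<1$ your route proves the lemma with the $(1,1)$ entry of $P_c$ replaced by $\frac{(1+\alpha)\rho}{2}$, i.e., under a slightly stronger positive-semidefiniteness hypothesis than the one stated. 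Note, however, that the paper's own derivation inherits the same issue: taking the convex combination of \eqref{convex-smooth} as written also gives $\frac\rho2$, and the coefficient $\rho$ in \eqref{eqn:desc} is exact only at $\alpha=1$; it appears to be a constant-factor slip whose repair merely rescales the admissible step-size region. So your version is, if anything, the more rigorous accounting; the structure of the argument and the conclusion \eqref{onestep} are intact.
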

\begin{proof}
Substituting $\tilde{z}=(x^*,y)$ into Lemma \ref{lemma:one-step} and rearranging the terms yields
\begin{eqnarray}\label{onestepxstar}
	&&h(x^{k+1})-h(x^*)+\iprod{ F(z^{k+1}) }{ z^{k+1}-\tilde{z} }\nonumber\\
	& & +(\alpha-1)\iprod{ \nabla_x\Psi(z^{k+1})-\nabla_x\Psi(z^k) }{ x^{k+1}-x^* }\nonumber\\
	&\!\!\leq\ & \frac1{2}\|z^k-\tilde z\|^2_{\Lambda^{-1}}-\frac1{2}\|z^{k+1}-\tilde z\|^2_{\Lambda^{-1}}-\frac1{2}\nrm{ z^{k+1}-z^{k} }^2_{\Lambda^{-1}}\\
	&&+ \iprod{ F(z^k)-F(z^{k-1}) }{ \tilde z-z^{k} }_{\Xi\Theta} - \iprod{ F(z^{k+1})-F(z^k) }{ \tilde z-z^{k+1} }_{\Xi\Theta} \nonumber\\
	&&+ (\mu-\beta)\iprod{  \nabla_y\Psi(z^k) }{ y^{k}-y } - (\mu-\beta)\iprod{  \nabla_y\Psi(z^{k+1}) }{ y^{k+1}-y }\nonumber\\
	&&+ \iprod{  F(z^{k})-F(z^{k-1}) }{ z^k-z^{k+1} }_{\Xi\Theta}
	+(\mu-\beta)\iprod{ \nabla_y\Psi(z^k) }{ y^{k+1}-y^k }.\nonumber
\end{eqnarray}
Plugging in the definition of $\Delta_k(z)$ yields
\begin{eqnarray}
    \Delta_{k}(\tilde{z})-\Delta_{k+1}(\tilde{z})
    \geq h(x^{k+1})-h(x^*)+\Gamma_1+\Gamma_2+\Gamma_3,
\end{eqnarray}
where
\begin{align}\label{def:Gamma}
    \Gamma_1&:=\iprod{ F(z^{k+1}) }{ z^{k+1}-\tilde{z} }
    +(\alpha-1)\iprod{ \nabla_x\Psi(z^{k+1})-\nabla_x\Psi(z^k) }{ x^{k+1}-x^* },\nonumber\\
    \Gamma_2&:={\frac{c}{2}\nrm{ z^{k}\!\!-\!z^{k-1} }^2_{\!\Lambda^{-1}}\!+\!\frac{1\!-\!c}{2}\|z^{k+1}\!\!-\!z^{k}\|^2_{\!\Lambda^{-1}}}
    \!-\!\iprod{ F(z^{k})\!-\!F(z^{k-1}) }{ z^k\!\!-\!z^{k+1} }_{\!\Xi\Theta}\!,\nonumber\\
    \Gamma_3&:=-(\mu\!-\!\beta)\!\iprod{ \nabla_y\Psi(z^k) }{ y^{k+1}\!-\!y^{k} }\!.
\end{align}
We first deal with the term $\Gamma_1$.
Through a direct computation, we have
\begin{align*}
    \Gamma_1
    =& \,\alpha \iprod{ \nabla f_{\rho}(x^{k+1})-A^\top y^{k+1} }{ x^{k+1}-x^* }
    + \iprod{ Ax^{k+1}-b }{ y^{k+1}-y }\\
    &+(1-\alpha) \iprod{ \nabla f_{\rho}(x^{k})-A^\top y^{k} }{ x^{k+1}-x^* }\\
    =& \iprod{ \alpha\nabla f_{\rho}(x^{k+1}) + (1-\alpha)\nabla f_{\rho}(x^{k}) }{ x^{k+1}-x^* } - \iprod{ Ax^{k+1}-b }{ y }\\
    &+ (1-\alpha)\iprod{ A^\top(y^{k+1}-y^{k}) }{ x^{k+1}-x^* }.
\end{align*}
Due to the convexity of $f_{\rho}(x)$ and the smoothness of $f_{\rho}(x)$, it holds that
\begin{equation}\label{convex-smooth}
    \begin{split}
        &\langle \nabla f_{\rho}(x^{k+1}), x^{k+1}-x^*\rangle \geq f_{\rho}(x^{k+1})-f_{\rho}(x^*),\\
        &\langle \nabla f_{\rho}(x^k), x^{k+1}-x^*\rangle\geq f_{\rho}(x^{k+1})-f_{\rho}(x^*)-\frac{L_{f_{\rho}}}{2}\nrm{x^{k+1}-x^k}^2.
    \end{split}
\end{equation}
Note that $Ax^*=b$, we then have
\begin{align}\label{eqn:desc}
\Gamma_1
\geq &\,\,
f(x^{k+1})-f(x^*) - \iprod{ Ax^{k+1}-b }{ y } + \rho\nrm{Ax^{k+1}-b}^2\\
&-\frac{(1-\alpha)L_{f_{\rho}}}{2}\nrm{x^{k+1}-x^k}^2
+ (1-\alpha)\iprod{ y^{k+1}-y^{k} }{ Ax^{k+1}-b }.\nonumber
\end{align}
Next, we bound the term $\Gamma_2$. By the smoothness of $F$, for any $z=(x,y)$ we have
\begin{equation}\label{Fz}
    \begin{split}
        &\abs{ \iprod{  F(z^{k})-F(z^{k-1}) }{ z^k-z }_{\Xi\Theta}}\\
        \leq \ & \alpha\abs{\langle \nabla f_\rho(x^k)-\nabla f_\rho(x^{k-1}),x^k-x\rangle}
        +\alpha\abs{\langle A^\T(y^k-y^{k-1}),x^k-x \rangle}\\
        &+\abs{\mu\beta} \abs{\iprod{  A(x^k-x^{k-1}) }{ y^k-y }}\\
        \leq\ &\frac{\alpha L_{f_\rho}}2\Big(\!\nrm{x-x^k}^2\!\!+\nrm{x^k-x^{k-1}}^2\!\Big) \\
        &+ \frac{\alpha\|A\|}{2}\Big(\sqrt{\frac{\sigma}{\tau}}\|x^k-x\|^2 + \sqrt{\frac{\tau}{\sigma}}\|y^k-y^{k-1}\|^2\Big) \\
        &+\frac{\abs{\mu\beta}\!\nrm{A}}{2}\Big(\sqrt{\frac{\sigma}{\tau}}\nrm{x^k-x^{k-1}}^2 + \sqrt{\frac{\tau}{\sigma}}\nrm{y-y^{k}}^2\Big)\\
        \leq\ &\frac{c}{2}\nrm{z^k-z^{k-1}}_{\Lambda^{-1}}^2+\frac{c}{2}\nrm{z^k-z}_{\Lambda^{-1}}^2,
    \end{split}
\end{equation}
where the last inequality is due to definition of $c$.
Then we can take
\begin{equation*}
        \delta_1^k\!:=\!
        \frac{c}{2}\nrm{ z^{k+1}\!-\!z^{k} }^2_{\Lambda^{-1}}\!+\!\frac{c}{2}\|z^{k}\!-\!z^{k-1}\|^2_{\Lambda^{-1}}
        \!-\!\iprod{ F(z^{k})\!-\!F(z^{k-1}) }{ z^k\!-\!z^{k+1} }_{\Xi\Theta},
\end{equation*}
and $\delta_1^k\geq 0$ by \eqref{Fz}. Combining \eqref{eqn:desc} with $\Gamma_2=\delta_1^k+\frac{1-2c}{2}\nrm{ z^{k+1}-z^{k} }^2_{\Lambda^{-1}}$, we see
\begin{align*}
    \Delta_{k}(\tilde{z})-\Delta_{k+1}(\tilde{z})\geq\ &\Phi(x^{k+1})-\Phi(x^*)-\langle Ax^{k+1}-b,y\rangle
    +\delta_1^{k}+\delta_2^k, 
\end{align*}
where
\begin{align*}\label{eqn:def-delta-2}
    \delta_2^k:=
    &\frac{1-2c}{2}\nrm{ z^{k+1}-z^{k} }^2_{\Lambda^{-1}}+\rho\|Ax^{k+1}-b\|^2
    -\frac{(1-\alpha)L_{f_{\rho}}}{2}\nrm{x^{k+1}-x^k}^2\\
    &+(\mu-\beta)\iprod{ Ax^{k}-b }{ y^{k+1}-y^{k} }
    +(1-\alpha)\iprod{ Ax^{k+1}-b }{ y^{k+1}-y^{k} }\\
    =&\frac{1-2c}{2}\nrm{ z^{k+1}-z^{k} }^2_{\Lambda^{-1}}+\rho\|Ax^{k+1}-b\|^2
    -\frac{(1-\alpha)L_{f_{\rho}}}{2}\nrm{x^{k+1}-x^k}^2\\
    &\!-\!(\mu\!-\!\beta)\!\iprod{ A(x^{k+1}\!-\!x^k) }{ y^{k+1}\!-\!y^{k} }
    \!+\!(1\!-\!\alpha\!-\!\beta\!+\!\mu)\!\iprod{ Ax^{k+1}\!-\!b}{ y^{k+1}\!-\!y^{k} }\!.
\end{align*}
The fact $\delta_2^k\geq 0$ directly follows from $P_c\succeq 0$, because $\delta_2^k$ is a quadratic form of $Ax^{k+1}-b$, $x^{k+1}-x^{k}$ and $y^{k+1}-y^{k}$, with matrix $P_c$. We finalize the proof by setting $\delta^k:=\delta_1^k+\delta_2^k$.
\qed
\end{proof}

\begin{corollary}\label{cor:pos-delta}
    Under the settings of Lemma \ref{lemma:eq} with $\tilde{z}=(x^*,y)$, it holds that
    \begin{equation}\label{eqn:pos-delta}
		0
        \leq \Delta_k(\tilde{z})
		\leq \nrm{z^k-\tilde{z}}^2_{\Lambda^{-1}}+c\nrm{z^k-z^{k-1}}_{\Lambda^{-1}}^2.
	\end{equation}
\end{corollary}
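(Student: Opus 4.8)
The plan is to read both inequalities in \eqref{eqn:pos-delta} as the two halves of a single two-sided estimate on the indefinite part of $\Delta_k$. Evaluating the definition \eqref{def:Delta} at $\tilde z=(x^*,y)$, I would write
\[
\Delta_k(\tilde z)=\tfrac12\nrm{z^k-\tilde z}^2_{\Lambda^{-1}}+\tfrac{c}{2}\nrm{z^k-z^{k-1}}^2_{\Lambda^{-1}}+T_k,
\]
where $T_k:=\iprod{F(z^k)-F(z^{k-1})}{\tilde z-z^k}_{\Xi\Theta}+(\mu-\beta)\iprod{\nabla_y\Psi(z^k)}{y^k-y}$ collects the last two terms. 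It then suffices to prove the single bound $\abs{T_k}\leq\tfrac12\nrm{z^k-\tilde z}^2_{\Lambda^{-1}}+\tfrac{c}{2}\nrm{z^k-z^{k-1}}^2_{\Lambda^{-1}}$, since the lower bound $\Delta_k(\tilde z)\geq0$ is then immediate and the upper bound follows by adding $\abs{T_k}$ to the two leading squares.

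Next I would expand $T_k$. Using $F(z)=[\nabla_x\Psi(z);-\nabla_y\Psi(z)]$, $\nabla_x\Psi(z^k)=\nabla f_\rho(x^k)-A^\T y^k$, and the identity $\nabla_y\Psi(z^k)=-(Ax^k-b)=-A(x^k-x^*)$ (valid for $\KK=\{0\}$ because $Ax^*=b$), and combining the $y$-contributions of the two terms, a short computation gives
\begin{align*}
T_k=\ &\alpha\iprod{\nabla f_\rho(x^k)-\nabla f_\rho(x^{k-1})}{x^*-x^k}+\alpha\iprod{y^k-y^{k-1}}{A(x^k-x^*)}\\
&-\mu\beta\iprod{A(x^k-x^{k-1})}{y^k-y}-(\mu-\beta)\iprod{A(x^k-x^*)}{y^k-y}.
\end{align*}
Bounding the first piece by $L_{f_\rho}$-Lipschitzness of $\nabla f_\rho$ and the remaining three by Young's inequality with the weights $\sqrt{\sigma/\tau},\sqrt{\tau/\sigma}$ exactly as in \eqref{Fz}, I obtain $\abs{T_k}$ controlled by a nonnegative combination of $\tfrac1\tau\nrm{x^k-x^{k-1}}^2$, $\tfrac1\sigma\nrm{y^k-y^{k-1}}^2$, $\tfrac1\tau\nrm{x^k-x^*}^2$ and $\tfrac1\sigma\nrm{y^k-y}^2$.

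It remains to match coefficients. The two increment coefficients are absorbed into $\tfrac{c}{2}\nrm{z^k-z^{k-1}}^2_{\Lambda^{-1}}$ as soon as $c\geq\alpha\tau L_{f_\rho}+\abs{\mu\beta}\sqrt{\sigma\tau}\nrm{A}$ and $c\geq\alpha\sqrt{\sigma\tau}\nrm{A}$, both guaranteed by $c=C^{\mathrm{affine}}_{\alpha,\beta,\mu}(\tau,\sigma,\rho)$. The two gap coefficients instead require $(\abs{\mu\beta}+\abs{\mu-\beta})\sqrt{\sigma\tau}\nrm{A}\leq1$ and $\alpha\tau L_{f_\rho}+(\alpha+\abs{\mu-\beta})\sqrt{\sigma\tau}\nrm{A}\leq1$, and here the hypothesis $P_c\succeq0$ is what does the work: its principal $2\times2$ block in the $x$- and $y$-increment coordinates, with diagonal entries $p=\tfrac{1-2c}{2\tau}-\tfrac{(1-\alpha)L_{f_\rho}}{2}$ and $q=\tfrac{1-2c}{2\sigma}$ and off-diagonal $\tfrac{\beta-\mu}{2}A$, must be positive semidefinite, forcing $\abs{\mu-\beta}\nrm{A}\leq2\sqrt{pq}$. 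Since $0\leq p\leq\tfrac{1-2c}{2\tau}$ (using $\alpha\leq1$) and $q=\tfrac{1-2c}{2\sigma}$, this yields $\abs{\mu-\beta}\sqrt{\sigma\tau}\nrm{A}\leq1-2c$, while $c\leq\tfrac12$ also follows from $q\geq0$. Combined with $\abs{\mu\beta}\sqrt{\sigma\tau}\nrm{A}\leq c$ and $\alpha\tau L_{f_\rho}+\alpha\sqrt{\sigma\tau}\nrm{A}\leq c$ from the definition of $c$, each gap condition collapses to $1-c\leq1$, which holds.

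The step I expect to be the main obstacle is precisely the fourth term $(\mu-\beta)\iprod{\nabla_y\Psi(z^k)}{y^k-y}$: unlike the third term it is not an increment of $F$, so it is not covered by \eqref{Fz}, and it injects a genuine $x^*$-versus-$y$ cross term whose coefficient $\abs{\mu-\beta}$ cannot be absorbed by the definition of $c$. Handling it rests on two observations, namely using feasibility $Ax^*=b$ to rewrite $\nabla_y\Psi(z^k)$ as $-A(x^k-x^*)$ so that the term becomes bilinear in the two blocks of $z^k-\tilde z$, and then extracting the estimate $\abs{\mu-\beta}\nrm{A}\leq2\sqrt{pq}$ from the off-diagonal block of $P_c$; everything else is routine Cauchy--Schwarz bookkeeping.
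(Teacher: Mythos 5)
Your proposal is correct and follows essentially the same route as the paper's proof: both rewrite the $(\mu-\beta)$ term via $Ax^*=b$ into the bilinear cross term $(\beta-\mu)\iprod{A(x^k-x^*)}{y^k-y}$, absorb the $F$-increment term exactly as in \eqref{Fz}, and extract the needed control on the $\abs{\mu-\beta}$ cross term from $P_c\succeq0$ (together with $0\leq\alpha\leq1$ and the definition of $c$). The only difference is one of packaging: the paper keeps the cross term inside the quadratic form $Q_c$ and proves the matrix sandwich $0\preceq Q_c\preceq(1-c)\Lambda^{-1}$, whereas you scalarize it by Young's inequality with weight $\sqrt{\sigma/\tau}$ and verify the resulting coefficient conditions from the principal $2\times2$ block of $P_c$ — your derived bound $\abs{\mu-\beta}\sqrt{\sigma\tau}\nrm{A}\leq1-2c$ is in fact slightly stronger than the $1-c$ the paper's sandwich requires, so your argument closes with room to spare.
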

\begin{proof}
Notice that due to $Ax^*=b$, we have
\begin{align*}
    \begin{split}
        \Delta_k(\tilde{z}) =& \frac1{2}\|z^k-z\|^2_{\Lambda^{-1}}+\frac{c}{2}\|z^{k}-z^{k-1}\|^2_{\Lambda^{-1}}+(\beta-\mu)\iprod{ A(x^k-x^*) }{ y^{k}-y }\\
        &+\langle F(z^k)-F(z^{k-1}), \tilde{z}-z^{k}\rangle_{\Xi\Theta}.
    \end{split}
\end{align*}
From \eqref{Fz}, we know
\[\begin{aligned}
    \abs{ \iprod{  F(z^{k})-F(z^{k-1}) }{ z^k-\tilde{z} }_{\Xi\Theta}}
    \leq\ \frac{c}{2}\nrm{z^k-z^{k-1}}_{\Lambda^{-1}}^2+\frac{c}{2}\nrm{z^k-\tilde{z}}_{\Lambda^{-1}}^2.
\end{aligned}\]
Combining the above two relations, we obtain
\[\begin{aligned}
    \nrm{\tilde{z}-z^k}_{Q_c}^2
    \leq \Delta_k(\tilde{z})
    \leq \nrm{\tilde{z}-z^k}_{c\Lambda^{-1}+Q_c}^2+c\nrm{z^k-z^{k-1}}_{\Lambda^{-1}}^2,
\end{aligned}\]
where we denote
\begin{equation}\label{def:Qc}
    Q_c:=\begin{bmatrix}
        \frac{1-c}{2\tau}I_n & \frac{\beta-\mu}{2}A^\T \\
        \frac{\beta-\mu}{2}A & \frac{1-c}{2\sigma}I_m
    \end{bmatrix}.
\end{equation}
Here $(1-c)\Lambda^{-1}\succeq Q_c\succeq 0$ due to $P_c\succeq 0$. This observation completes the proof.
\qed
\end{proof}
As a direct result of Lemma \ref{lemma:eq}, with initialization $x^{-1} = x^0, y^{-1} = y^0 = 0$, we have the following ergodic convergence result. 

\begin{theorem}\label{thm:affine}
	For the affine equality constrained problems, suppose that Assumptions \ref{assumption1}-\ref{Slater's} are satisfied and $0\leq\alpha\leq 1$. If the step sizes $\tau, \sigma$, the penalty factor $\rho$, and the extrapolation coefficients $\alpha,\beta,\mu$ are properly chosen so that $P_c\succeq0$, 	then for $\forall N\geq1$ and  $\forall\gamma\geq 0$, we have
	\begin{equation}\label{thm1}
		\Phi(\bar{x}_N)-\Phi(x^*)+\gamma\|A\bar{x}_N-b\|
		\leq \frac1{N}\left(\frac{\nrm{x^0-x^*}^2}{\tau}+\frac{\gamma^2}{\sigma}\right),
	\end{equation}
    where $\bar x_N = \frac{1}{N}\sum_{k=1}^{N}x^k$. Moreover, it holds that
	\begin{align*}
		\abs{ \Phi(\bar{x}_N)-\Phi(x^*) }&\leq \frac4{N}\left(\frac{\nrm{x^0-x^*}^2}{\tau}+\frac{\nrm{y^*}^2}{\sigma}\right),\\
		\nrm{A\bar{x}_N-b} &\leq \frac{3}{N}\left(\frac{\nrm{x^0-x^*}}{\sqrt{\tau\sigma}}+\frac{\nrm{y^*}}{\sigma}\right).
	\end{align*}
\end{theorem}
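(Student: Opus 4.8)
The plan is to telescope the one-step estimate of Lemma~\ref{lemma:eq} into a single ergodic inequality, and then to extract the two quantitative bounds through Lemma~\ref{lemma:sep-gap}. First I would fix the reference point $\tilde z=(x^*,y)$ and sum the inequality of Lemma~\ref{lemma:eq} over $k=0,\dots,N-1$. The left-hand side telescopes to $\Delta_0(\tilde z)-\Delta_N(\tilde z)$; on the right-hand side I discard the nonnegative residuals $\delta^k$ and use $\Delta_N(\tilde z)\ge0$ from Corollary~\ref{cor:pos-delta}. Since $\Phi$ is convex and $A$ is linear, Jensen's inequality applied to $\bar x_N=\frac1N\sum_{k=1}^N x^k$ converts $\frac1N\sum_{k=1}^N(\Phi(x^k)-\Phi(x^*))$ and $\frac1N\sum_{k=1}^N(Ax^k-b)$ into $\Phi(\bar x_N)-\Phi(x^*)$ and $A\bar x_N-b$, so that
\[
\Phi(\bar x_N)-\Phi(x^*)-\langle A\bar x_N-b,\,y\rangle\ \le\ \tfrac1N\,\Delta_0(\tilde z)\qquad\forall y.
\]

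Next I would control $\Delta_0(\tilde z)$ using the initialization. Because $x^{-1}=x^0$ and $y^{-1}=y^0=0$ give $z^{-1}=z^0$, the increment term in the upper bound of Corollary~\ref{cor:pos-delta} vanishes, leaving $\Delta_0(\tilde z)\le\|z^0-\tilde z\|_{\Lambda^{-1}}^2=\frac{\|x^0-x^*\|^2}{\tau}+\frac{\|y\|^2}{\sigma}$, where I used $y^0=0$. The point I want to stress is that invoking this \emph{upper} bound---rather than expanding $\Delta_0$ termwise---is exactly what absorbs the $y$-dependent cross term $(\mu-\beta)\langle A(x^0-x^*),y\rangle$ (coming from the last summand of $\Delta_0$ after using $b=Ax^*$) into the clean coefficients $1/\tau,1/\sigma$ instead of $1/(2\tau),1/(2\sigma)$; the semidefiniteness $P_c\succeq0$ is precisely what validates this, through $(1-c)\Lambda^{-1}\succeq Q_c$. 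Combining the two displays gives the same inequality with $\Delta_0$ replaced by $\frac{\|x^0-x^*\|^2}{\tau}+\frac{\|y\|^2}{\sigma}$, valid for all $y$. Choosing $y=-\gamma\,(A\bar x_N-b)/\|A\bar x_N-b\|$ (and $y=0$ when $A\bar x_N=b$) makes $-\langle A\bar x_N-b,y\rangle=\gamma\|A\bar x_N-b\|$ with $\|y\|=\gamma$, which yields the master inequality \eqref{thm1}.

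For the two explicit bounds I would feed \eqref{thm1} into Lemma~\ref{lemma:sep-gap}, first noting that $\KK=\{0\}$ forces $\KK^\circ=\RR^m$, hence $\cpos{A\bar x_N-b}=A\bar x_N-b$ and the two constraint-violation measures coincide. Writing $\delta=\frac1N\left(\frac{\|x^0-x^*\|^2}{\tau}+\frac{\gamma^2}{\sigma}\right)$, Lemma~\ref{lemma:sep-gap} gives $|\Phi(\bar x_N)-\Phi(x^*)|\le\max\{1,\frac{\|y^*\|}{\gamma-\|y^*\|}\}\delta$ and $\|A\bar x_N-b\|\le\delta/(\gamma-\|y^*\|)$ for any $\gamma>\|y^*\|$. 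Taking $\gamma=2\|y^*\|$ immediately delivers the function-value bound, while the constraint-violation bound with its $\sqrt{\tau\sigma}$ scaling follows by minimizing $\delta/(\gamma-\|y^*\|)$ over $\gamma>\|y^*\|$ (the minimizer being $\gamma^\star=\|y^*\|+\sqrt{\|y^*\|^2+\sigma\|x^0-x^*\|^2/\tau}$) and then relaxing the constants to the stated factors.

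I expect the main obstacle to be the handling of $\Delta_0(\tilde z)$: one must recognize that the $y$-dependent cross term is not an error term to be crudely bounded (which would introduce an extraneous $\|Ax^0-b\|$) but is exactly accounted for by the quadratic-form upper bound of Corollary~\ref{cor:pos-delta}, i.e.\ by $P_c\succeq0$. The only remaining delicacy is the scalar optimization over $\gamma$, which trades function-value accuracy against feasibility accuracy and must be tuned to reproduce the advertised $\mathcal{O}(1/N)$ rate with the correct dependence on $\tau,\sigma,\|x^0-x^*\|$ and $\|y^*\|$.
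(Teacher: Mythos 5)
Your proposal is correct and takes essentially the same route as the paper's proof: telescope Lemma~\ref{lemma:eq} with $\tilde z=(x^*,y)$, use Corollary~\ref{cor:pos-delta} to get $\Delta_N(\tilde z)\geq 0$ and (since $z^{-1}=z^0$, $y^0=0$) $\Delta_0(\tilde z)\leq \|z^0-\tilde z\|^2_{\Lambda^{-1}}=\frac{\|x^0-x^*\|^2}{\tau}+\frac{\|y\|^2}{\sigma}$, choose $\hat y=-\gamma(A\bar x_N-b)/\|A\bar x_N-b\|$ to obtain \eqref{thm1}, and invoke Lemma~\ref{lemma:sep-gap}; indeed your optimal $\gamma^\star=\|y^*\|+\sqrt{\|y^*\|^2+\sigma\|x^0-x^*\|^2/\tau}$ is exactly the single value of $\gamma$ the paper plugs in for both bounds, so your split into $\gamma=2\|y^*\|$ (objective gap) and $\gamma^\star$ (feasibility) is an immaterial variant, and your observation about absorbing the cross term $(\mu-\beta)\langle A(x^0-x^*),y\rangle$ via the quadratic-form upper bound of Corollary~\ref{cor:pos-delta} is precisely how the paper proceeds. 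One incidental remark: carried out exactly, both your route and the paper's give the feasibility bound $\frac{2}{N}\frac{\|x^0-x^*\|}{\sqrt{\tau\sigma}}+\frac{4}{N}\frac{\|y^*\|}{\sigma}$ (since $\delta/(\gamma^\star-\|y^*\|)=2\gamma^\star/(N\sigma)$), which matches the theorem's display only up to a constant-level relaxation (e.g.\ with factor $4$ in place of $3$ when $\|y^*\|$ dominates), so this slack lies in the paper's stated constants rather than in your argument.
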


\begin{proof}
	By Lemma \ref{lemma:eq}, for $k=0,\cdots,N-1$ and $y$, it holds that
	\begin{equation*}
		\begin{split}
			\Delta_{k}(\tilde{z})-\Delta_{k+1}(\tilde{z})\geq\ &\Phi(x^{k+1})-\Phi(x^*)-\langle Ax^{k+1}-b,y\rangle.
		\end{split}
	\end{equation*}
	Taking the sum of the above inequality from $0$ to $N - 1$, we get
	\begin{equation}
		\begin{split}
			\sum_{k=0}^{N-1} \left(\Phi(x^{k+1})-\Phi(x^*)-\langle Ax^{k+1}-b,y\rangle\right)
			\leq \Delta_0(\tilde{z})-\Delta_N(\tilde{z}).
		\end{split}
	\end{equation}
    Let $\bar{x}_N:=\frac1N\sum_{k=1}^{N}x^k$ denote the averaged iterates.
	Then, for any $\gamma\geq 0$,
	let $\hat{y}=-\frac{\gamma (A\bar x_N-b)}{\|A\bar x_N-b\|}$ and $\hat z = (x^*,\hat y)$, we have 
	\begin{equation}
		\begin{split}
			\Phi(\bar{x}_N)-\Phi(x^*)+\gamma\|A\bar x_N-b\|
			&=\  \Phi(\bar{x}_N)-\Phi(x^*)-\iprod{A\bar x_N-b}{ \hat{y} }\\
			\leq\  \frac1N&\sum_{k=0}^{N-1} \left(\Phi(x^{k+1})-\Phi(x^*)-\langle A\bar x^{k+1}-b,\hat{y}\rangle\right)\\
			&\!\!\!\!\!\!\!\!\!\!\!\!\!\!\!\!\leq\  \frac{\Delta_0(\hat z)-\Delta_N(\hat{z})}{N} \leq \ \frac1{N}\bigg(\frac{\nrm{x^0-x^*}^2}{\tau}+\frac{\gamma^2}{\sigma}\bigg),
		\end{split}
	\end{equation} 
    where the last inequality is due to Corollary \ref{cor:pos-delta}. 
	The proof is completed by setting $\gamma=\nrm{y^*}+\sqrt{\frac{\sigma}{\tau}\nrm{x^*-x^0}^2+\nrm{y^*}^2}$ and applying Lemma \ref{lemma:sep-gap}.
	\qed
\end{proof}

Next, we substitute the parameters $\mu, \alpha, \beta$ into the step size conditions $P_c\succeq0$ and analyze the sufficient conditions on step sizes specifically.

\vspace{0.2cm}
\noindent\textbf{SOGDA}\,\, Set $\mu=1, \alpha=0, \beta=1$. For any $\rho>0$, $P_c\succeq0$ is guaranteed if
\[
    \quad 2\sqrt{\sigma\tau}\nrm{A}+\max\left(\frac{\sigma}{2\rho}, \tau L_{f_{\rho}}\right)\leq 1.
\]
Note that the above step size condition will never be satisfied if we set $\rho=0$. This indicates the potential non-convergence of SOGDA when $\rho=0$, which is also numerically observed in our experiments.\vspace{0.1cm}\\
\noindent\textbf{PDHG}\,\,  Set $\mu=0, \alpha=0, \beta=0$. For any $\rho>0$, $P_c\succeq0$ is guaranteed if
\[
    \sigma\leq 2\rho,\quad \frac1\tau\geq L_f+\rho\nrm{A}^2.
\]
When $\rho=0$, the above step size condition can be satisfied only if $\sigma = 0$, which makes the right hand side of \eqref{thm1} unbounded. This case corresponds to the potential non-convergence of PDHG pointed out by \cite{he2014convergence}.
When $\rho>0$, the algorithm can been viewed as the linearized ALM. Hence the convergence of the linearized ALM is also obtained.\vspace{0.1cm}\\
\noindent\textbf{CP}\,\,  Set $\mu=0, \alpha=0, \beta=1$. For any $\rho\geq0$, $P_c\succeq0$ is guaranteed if
\[\frac1{\tau}\geq L_f+(\rho+\sigma)\nrm{A}^2.\]

\noindent\textbf{GDA}\,\,  Set $\mu=1, \alpha=0, \beta=0$. For any $\rho>0$, $P_c\succeq0$ is guaranteed if
\[
    \sigma< \frac{\rho}{2}, \quad \frac{1}{\tau} \geq L_f+\rho\nrm{A}^2 \frac{\rho-\sigma}{\rho-2\sigma}.
\]

\noindent\textbf{OGDA}\,\,  Set $\mu=1, \alpha=1, \beta=1$. For any $\rho \geq0$, $P_c\succeq0$ is guaranteed if
\[
    \tau L_{f_{\rho}}+\sqrt{\sigma\tau}\nrm{A}\leq \frac{1}{2}.
\]

It is worth mentioning that the analysis in this section is also feasible to the case of general cone with $\rho\!=\!0$. The only difference is that we need to set $s(y)=\mathbbm{1}_{\KK^*}(y)$, which does not interrupt the proof. Notice that CP and OGDA are guaranteed to converge while $\rho\!=\!0$ according to the above conditions. Therefore, for the case of general cone with $\rho\!=\!0$, CP and OGDA converge under the same step size conditions.

\subsection{Ergodic convergence for conic inequality constrained problem}

We next consider the general conic affinely constrained problems with $\rho>0$. 
For such problems, we specify the weight $c$ in the potential function as
$$
c=C^{\mathrm{conic}}_{\alpha,\beta,\mu}(\tau, \sigma, \rho):=\max \big\{\alpha \tau L_{f_\rho}, \abs{\mu\beta}\frac{\sigma}{\rho}\big\} + \max \big\{\alpha, \abs{\mu\beta}\big\}\nrm{A}\sqrt{\sigma\tau}.
$$
Define   $\gamma_y\!=\!(\mu\!-\!\beta)^2\!+\!(1\!+\!\alpha)\!\abs{\mu\!-\!\beta} \!+\! 4(\mu\!-\!\beta)$, $\gamma_w\!=\!t\big(2\!-\!2\alpha,(1\!-\!\alpha)^2\!+\!(1\!+\!\alpha)\!\abs{\mu\!-\!\beta}\!\big)$ where the function $t(\cdot,\cdot)$ is given by 
$$t(a,b):=\begin{cases}
	b+\frac{(a-b)^2}{2a-b}, & a>b,\\
	b, & a\leq b.\\
\end{cases}$$
Then we define the matrix $P_c^\prime$ for any given $c>0$ as 
\[\begin{aligned}
	P_c^\prime=
	\begin{bmatrix}
		\left(\frac{1-2c}{2\tau}-\frac{(1-\alpha)L_{f}}{2}\right)I_n-\frac{\rho\gamma_w}{4}A^\T A & \frac{\gamma_w}{4}A^\T\\
		\frac{\gamma_w}{4}A & \left(\frac{1-2c}{2\sigma}-\frac{\gamma_w+\gamma_y}{4\rho}\right)I_m
	\end{bmatrix}.
\end{aligned}\] 
Before presenting the analysis of the general cone, we introduce a supporting lemma. 
\begin{lemma}\label{lemma:algebra}
	For $w, w'\in\RR^m$ and $a,b\geq 0$, it holds that
	\[\begin{aligned}
		2a\iprod{ \cpos{w} }{ \cneg{w'} }+b\nrm{ \cpos{w}-\cpos{w'} }^2
		\leq t(a,b)\nrm{ w-w' }^2.
	\end{aligned}\] 
	Furthermore, for $v, v'$, we consider $r=\cpos{w}+v, r'=\cpos{w'}+v', u=w+v, u'=w'+v'$, then
	\[\begin{aligned}
		&\nrm{r-r'}^2\leq \nrm{u-u'}^2+\nrm{v-v'}^2.
	\end{aligned}\]
	Especially, it holds that $\nrm{r^{k}-r^{k-1}}\leq \nrm{A}\nrm{x^{k}-x^{k-1}}+\frac{1}{\rho}\nrm{y^{k}-y^{k-1}}$.
\end{lemma}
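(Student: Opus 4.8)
The plan is to prove Lemma~\ref{lemma:algebra} in three pieces, matching its three assertions. The central identity I will rely on throughout is Moreau's decomposition: for any $w$ we have $w = \cpos{w} - \cneg{w}$ with $\cpos{w}\in\KK^\circ$, $\cneg{w}\in\KK$, and $\iprod{\cpos{w}}{\cneg{w}}=0$. I will also use the firm nonexpansiveness of the projection $\mathcal{P}_{-}(\cdot)$ onto the cone $-\KK$ (equivalently of $\cpos{\cdot}$ onto $\KK^\circ$), namely $\nrm{\cpos{w}-\cpos{w'}}\le\nrm{w-w'}$ and more precisely $\iprod{\cpos{w}-\cpos{w'}}{w-w'}\ge\nrm{\cpos{w}-\cpos{w'}}^2$.

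For the first inequality, I would expand everything through Moreau. First I would control the cross term: since $\iprod{\cpos{w}}{\cneg{w}}=0$ and $\iprod{\cpos{w'}}{\cneg{w'}}=0$, I can write $\iprod{\cpos{w}}{\cneg{w'}}$ in terms of $\iprod{\cpos{w}-\cpos{w'}}{\cneg{w'}-\cneg{w}}$ and use the obtuse-angle property between the projection residual and cone elements. The goal is to reduce the left-hand side to a nonnegative combination of $\nrm{\cpos{w}-\cpos{w'}}^2$ and $\nrm{\cneg{w}-\cneg{w'}}^2$, then invoke the orthogonal-style decomposition $\nrm{w-w'}^2=\nrm{\cpos{w}-\cpos{w'}}^2+\nrm{\cneg{w}-\cneg{w'}}^2-2\iprod{\cpos{w}-\cpos{w'}}{\cneg{w}-\cneg{w'}}$ and the sign of that last cross term. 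Writing $p=\nrm{\cpos{w}-\cpos{w'}}^2$ and $q=\iprod{\cpos{w}}{\cneg{w'}}+\iprod{\cpos{w'}}{\cneg{w}}\ge0$, the bound $2aq+bp\le t(a,b)\nrm{w-w'}^2$ should follow by optimizing the scalar coefficient: the two branches of $t(a,b)$ correspond exactly to whether the unconstrained maximizer of the resulting quadratic in the ratio $p/\nrm{w-w'}^2$ lies in the feasible range or is attained at the boundary $p=\nrm{w-w'}^2$. This case split is the source of the piecewise definition of $t$.

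The second inequality is the routine one. Writing $r-r'=(\cpos{w}-\cpos{w'})+(v-v')$ and $u-u'=(w-w')+(v-v')$, I would expand both squared norms and cancel the common $\nrm{v-v'}^2$ and cross terms, so that $\nrm{r-r'}^2\le\nrm{u-u'}^2+\nrm{v-v'}^2$ reduces to checking $\nrm{\cpos{w}-\cpos{w'}}^2+2\iprod{\cpos{w}-\cpos{w'}}{v-v'}\le\nrm{w-w'}^2+2\iprod{w-w'}{v-v'}+\nrm{v-v'}^2$; after regrouping this is nonexpansiveness of $\cpos{\cdot}$ together with a completed square in $v-v'$. For the final specialization I would set $w^k=Ax^k-b-y^k/\rho$, so that $\cpos{w^k}$ and $(Ax^k-b)$ play the roles above; applying the triangle inequality after the projection bound, together with $\nrm{A(x^k-x^{k-1})}\le\nrm{A}\nrm{x^k-x^{k-1}}$ and the $1/\rho$ scaling on the $y$-difference, yields $\nrm{r^k-r^{k-1}}\le\nrm{A}\nrm{x^k-x^{k-1}}+\tfrac1\rho\nrm{y^k-y^{k-1}}$.

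The main obstacle is the first inequality, specifically pinning down the constant $t(a,b)$ sharply. The difficulty is that the quantity $q$ (the sum of cross inner products) and the quantity $p$ are not independent: they are linked through $\nrm{w-w'}^2$ by the Moreau decomposition, and bounding $q$ alone by the nonexpansiveness is too lossy to recover the stated constant. I expect the cleanest route is to parametrize by $s=\nrm{\cpos{w}-\cpos{w'}}^2/\nrm{w-w'}^2\in[0,1]$, express both $q$ and the left-hand side as affine/quadratic functions of $s$ using the identity for $\nrm{w-w'}^2$, and then maximize the ratio over $s\in[0,1]$; the maximizer $s^\ast=(a-b)/(2a-b)$ lying inside $[0,1]$ precisely when $a>b$ produces the first branch, and the endpoint maximizer $s^\ast=1$ otherwise gives the second. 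I would be careful that $2a-b$ does not vanish in the relevant regime, which is guaranteed since $a>b\ge0$ forces $2a-b>a>0$.
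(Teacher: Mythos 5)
Your second and third claims are sound and essentially reproduce the paper's own argument: the reduction you describe closes via firm nonexpansiveness $\iprod{\cpos{w}-\cpos{w'}}{w-w'}\geq\nrm{\cpos{w}-\cpos{w'}}^2$ (equivalently, $\nrm{w-w'}^2-\nrm{\cpos{w}-\cpos{w'}}^2\geq\nrm{\cneg{w}-\cneg{w'}}^2$) followed by a completed square in $v-v'$, and the specialization with $w^k=Ax^k-b-y^k/\rho$, $v=y^k/\rho$, $u=Ax^k-b$ is exactly right.

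The genuine gap is in the first inequality. Write $p=\cpos{w}$, $n=\cneg{w}$, $p'=\cpos{w'}$, $n'=\cneg{w'}$, and set $P=\nrm{p-p'}^2$, $N=\nrm{n-n'}^2$, $q_1=\iprod{p}{n'}\geq0$, $q_2=\iprod{p'}{n}\geq0$, so that $D:=\nrm{w-w'}^2=P+N+2q_1+2q_2$. Your plan expresses $q_1$ and the left-hand side as functions of the single parameter $s=P/D$ and maximizes over $s\in[0,1]$, but $q_1$ is not determined by $s$, and the Moreau identity plus nonnegativity alone cannot yield the constant $t(a,b)$: in the abstract variable set $\{P,N,q_1,q_2\geq0,\ D=P+N+2q_1+2q_2\}$, the configuration $P=N=q_2=0$, $q_1>0$ gives $(2aq_1+bP)/D=a>t(a,b)$ whenever $a>b$, so no argument using only those facts can close. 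What rules this configuration out---and what your sketch never invokes---is the Cauchy--Schwarz coupling $q_1+q_2=-\iprod{p-p'}{n-n'}\leq\sqrt{PN}$. Once it is added, a two-parameter optimization does work: taking $q_2=0$, $q_1=\sqrt{PN}$ and $\theta=\sqrt{P}/(\sqrt{P}+\sqrt{N})$, the ratio becomes $2a\theta-(2a-b)\theta^2$, whose unconstrained maximizer $\theta^*=a/(2a-b)$ is interior precisely when $a>b$, giving the value $a^2/(2a-b)=b+(a-b)^2/(2a-b)$, while the endpoint $\theta=1$ gives $b$ otherwise. Note also that your claimed interior maximizer $s^*=(a-b)/(2a-b)$ is incorrect---at the extremum $s=(\theta^*)^2=a^2/(2a-b)^2$---a symptom of the flawed one-parameter reduction. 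The paper sidesteps optimization entirely with a sum-of-squares identity: for $a\geq b$,
\[
a^2\nrm{w-w'}^2-(2a-b)\bigl[2a\iprod{\cpos{w}}{\cneg{w'}}+b\nrm{\cpos{w}-\cpos{w'}}^2\bigr]
=\nrm{(a-b)(p-p')+a(n-n')}^2+2a(2a-b)\,q_2\geq0,
\]
and for $a\leq b$ it simply drops the nonnegative cross terms from the expansion of $D$. With the Cauchy--Schwarz step supplied, your route recovers the same sharp constant, but as written the key step does not go through.
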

The proof of the above lemma is deferred to the appendix. Armed with this lemma, we are ready to prove the monotonicity of the potential function.  

\begin{lemma}\label{lemma:ineq}
	Consider the case where $\KK$ is a general proper cone, $\rho>0$, and $s(\cdot) = 0$. 
    Suppose Assumptions \ref{assumption1}-\ref{Slater's} hold and $0\leq\alpha\leq1$.
	Let the reference point be $\tilde{z} = (x^*,y)$, where $x^*$ is a primal optimal solution and $y\in\dom\ s$ is arbitrary.  
  Then as long as $P_c^\prime\succeq0$, we have 
	\begin{equation}
		\label{onestep1}
		\Delta_{k}(\tilde{z})-\Delta_{k+1}(\tilde{z})\geq \Phi(x^{k+1})-\Phi(x^*)-\langle r^{k+1},y\rangle+\delta^k,
	\end{equation}
	where $\delta^k\geq0$ is some non-negative term.  
\end{lemma}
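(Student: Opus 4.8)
The plan is to mirror the proof of Lemma~\ref{lemma:eq}, with the affine quantities $\nabla f_\rho(x)-A^\T y$ and $Ax-b$ replaced by their conic counterparts $g_x^k=\nabla f(x^k)+\rho A^\T\cpos{w^k}$ and the generalized residual
\[
r^k:=\cpos{w^k}+\tfrac{y^k}{\rho}=(Ax^k-b)+\cneg{w^k}=-\nabla_y\Psi(z^k),
\]
so that $r^{k+1}=-g_y^{k+1}$. First I would substitute $\tilde z=(x^*,y)$ into the one-step estimate of Lemma~\ref{lemma:one-step} and plug in the definition \eqref{def:Delta} of $\Delta_k$; since Lemma~\ref{lemma:one-step} never uses $\KK=\{0\}$, this yields verbatim the same decomposition
\[
\Delta_k(\tilde z)-\Delta_{k+1}(\tilde z)\ \geq\ h(x^{k+1})-h(x^*)+\Gamma_1+\Gamma_2+\Gamma_3
\]
with $\Gamma_1,\Gamma_2,\Gamma_3$ as in \eqref{def:Gamma}. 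All the real work is to lower-bound $\Gamma_1+\Gamma_2+\Gamma_3$ by $\Phi(x^{k+1})-\Phi(x^*)-h(x^{k+1})+h(x^*)-\iprod{r^{k+1}}{y}+\delta^k$ with $\delta^k\geq0$.

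The crux is the treatment of $\Gamma_1$. Grouping the $\alpha$ and $1-\alpha$ contributions of $\iprod{F(z^{k+1})}{z^{k+1}-\tilde z}$ and splitting each $\iprod{g_x^j}{x^{k+1}-x^*}$ into a smooth part $\iprod{\nabla f(x^j)}{x^{k+1}-x^*}$ and a penalty part $\rho\iprod{\cpos{w^j}}{A(x^{k+1}-x^*)}$, I would apply convexity of $f$ for $j=k+1$ and convexity-plus-$L_f$-smoothness for $j=k$; this is why the \emph{conic} constant $L_f$ rather than $L_{f_\rho}$ enters the diagonal of $P_c^\prime$. Two cone facts then handle the penalty part: since $\cpos{w^j}=\PP_{\KK^\circ}(w^j)\in\KKc=-\KK^*$ and $Ax^*-b\in\KK$ we have $\iprod{\cpos{w^j}}{Ax^*-b}\leq0$, so the $x^*$ piece drops with the correct sign; and Moreau orthogonality gives $\iprod{\cpos{w^j}}{\cneg{w^j}}=0$. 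Writing $Ax^{k+1}-b=\cpos{w^{k+1}}-\cneg{w^{k+1}}+\tfrac{y^{k+1}}{\rho}$ then converts the self-penalty into a nonnegative term $\rho\nrm{\cpos{w^{k+1}}}^2$ plus products paired with $y^{k+1}$, which I regroup against the $\iprod{r^{k+1}}{y^{k+1}}$ already present. The net effect is
\[
\Gamma_1\ \geq\ f(x^{k+1})-f(x^*)-\iprod{r^{k+1}}{y}-\tfrac{(1-\alpha)L_f}{2}\nrm{x^{k+1}-x^k}^2+(\text{cone cross terms}),
\]
where the leftover cone cross terms involve differences $\cpos{w^{k+1}}-\cpos{w^k}$ and products $\iprod{\cpos{w^{k+1}}}{\cneg{w^k}}$.

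Finally I would assemble $\delta^k=\delta_1^k+\delta_2^k$. For $\delta_1^k$, I bound $\Gamma_2$ exactly as in \eqref{Fz}, but now using nonexpansiveness of $\cpos{\cdot}$ together with the estimate $\nrm{r^k-r^{k-1}}\leq\nrm{A}\nrm{x^k-x^{k-1}}+\tfrac1\rho\nrm{y^k-y^{k-1}}$ from Lemma~\ref{lemma:algebra}; this is precisely what forces the conic choice $c=C^{\mathrm{conic}}_{\alpha,\beta,\mu}$ and guarantees $\delta_1^k\geq0$. The remaining pieces — the cone cross terms from $\Gamma_1$, the term $\Gamma_3=(\mu-\beta)\iprod{r^k}{y^{k+1}-y^k}$, and the leftover $\tfrac{1-2c}{2}\nrm{z^{k+1}-z^k}^2_{\Lambda^{-1}}$ — are collected into $\delta_2^k$. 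Invoking the first inequality of Lemma~\ref{lemma:algebra} with appropriate $(a,b)$ to dominate each $2a\iprod{\cpos{w^{k+1}}}{\cneg{w^k}}+b\nrm{\cpos{w^{k+1}}-\cpos{w^k}}^2$ by $t(a,b)\nrm{w^{k+1}-w^k}^2$ produces exactly the coefficients $\gamma_w$ and $\gamma_y$. Since $w^{k+1}-w^k=A(x^{k+1}-x^k)-\tfrac1\rho(y^{k+1}-y^k)$, the $\gamma_w$-bound $-\tfrac{\rho\gamma_w}{4}\nrm{w^{k+1}-w^k}^2$ expands precisely into the blocks $-\tfrac{\rho\gamma_w}{4}A^\T A$, $\tfrac{\gamma_w}{4}A^\T$ and part of $-\tfrac{\gamma_w+\gamma_y}{4\rho}I_m$ of $P_c^\prime$, while the purely-dual remainders supply the $\gamma_y$-part of the $yy$-block. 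Thus $\delta_2^k\geq\nrm{z^{k+1}-z^k}^2_{P_c^\prime}\geq0$ whenever $P_c^\prime\succeq0$, which completes the bound.

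I expect the main obstacle to be this last bookkeeping: correctly matching every cone cross term to one application of Lemma~\ref{lemma:algebra} and tracking the sign-sensitive coefficients $\abs{\mu\beta}$, $\abs{\mu-\beta}$ and the piecewise $t(a,b)$, so that the residual is dominated from below by exactly the quadratic form $\nrm{z^{k+1}-z^k}^2_{P_c^\prime}$. The convexity-smoothness estimates and the one-step lemma are routine; the delicate point is that the cone projections do not split additively, so the identity $\iprod{\cpos{w}}{\cneg{w}}=0$ and the quantitative Lemma~\ref{lemma:algebra} must be combined in just the right way to absorb all projection terms into the single matrix $P_c^\prime$.
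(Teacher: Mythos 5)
Your proposal follows essentially the same route as the paper's proof: the same substitution of $\tilde z=(x^*,y)$ into Lemma \ref{lemma:one-step} yielding $\Gamma_1,\Gamma_2,\Gamma_3$, the same split of $\Gamma_1$ into a smooth part (convexity plus $L_f$-smoothness, so $L_f$ rather than $L_{f_\rho}$ enters $P_c^\prime$) and a penalty part handled via $\cpos{w}\in\KK^\circ$, $Ax^*-b\in\KK$ and Moreau orthogonality, and the same double use of Lemma \ref{lemma:algebra} --- the residual bound $\nrm{r^k-r^{k-1}}\leq\nrm{A}\nrm{x^k-x^{k-1}}+\tfrac1\rho\nrm{y^k-y^{k-1}}$ for $\Gamma_2$ with the conic $c$, and the $t(a,b)$ inequality (with $a=2(1-\alpha)$, $b=\gamma_0$) to absorb the projection cross terms into $-\tfrac{\rho\gamma_w}{4}\nrm{w^{k+1}-w^k}^2$, which expands into exactly the blocks of $P_c^\prime$. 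The only deviations are cosmetic: your cross term $\iprod{\cpos{w^{k+1}}}{\cneg{w^k}}$ transposes the indices of the paper's $\iprod{\cpos{w^k}}{\cneg{w^{k+1}}}$ (Lemma \ref{lemma:algebra} covers either), and the paper keeps the quadratic $\rho\nrm{r^{k+1}}^2$ intact to complete the square against the terms linear in $r^{k+1}$ from the $(1-\alpha)$ extrapolation and $\Gamma_3$ rather than peeling off $\rho\nrm{\cpos{w^{k+1}}}^2$, but your regrouping against $\iprod{r^{k+1}}{y^{k+1}}$ amounts to the same algebra.
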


\begin{proof}
Similar to the derivation of \eqref{onestepxstar}, substituting $\tilde{z}=(x^*,y)$ into \eqref{onehand**} and rearranging the terms gives
\begin{equation}\label{eqn:lmcone-0}
    \Delta_{k}(\tilde{z})-\Delta_{k+1}(\tilde{z})\geq h(x^{k+1})-h(x^*)+s(y^{k+1})-s(y) +\Gamma_1+\Gamma_2+\Gamma_3,
\end{equation}
where $\Gamma_1, \Gamma_2$ and $\Gamma_3$ are defined in \eqref{def:Gamma}. For our choice of $s$ and $y$, it always holds that $s(y)=0$, $s(y^{k+1})=0$.
Now, we first bound the term $\Gamma_1$. For simplicity, we write $r^{k}\!=\!-\!\nabla_{\!y}\Psi(z^k)\!=\!\frac{y^k}{\rho}\!+\!\mathcal{P}_{\!+}(w^k)\!=\!Ax^k\!-\!b\!+\!\mathcal{P}_{\!-}(w^{k})$ and $w^k\!=\!Ax^k\!-\!b\!-\!\frac{y^k}{\rho}$.
Now, by basic algebra we have
\begin{align}
	\label{eqn:lmcone-1}
    \Gamma_1
    =&  \iprod{ \alpha\nabla_x \Psi(x^{k+1}) + (1-\alpha)\nabla_x \Psi(x^{k}) }{ x^{k+1}-x^* }
    + \iprod{ r^{k+1} }{ y^{k+1}-y }\\
    =& \underbrace{
        \iprod{ \alpha\nabla f(x^{k+1}) + (1-\alpha)\nabla f(x^{k}) }{ x^{k+1}-x^* } - \iprod{ r^{k+1} }{ y }
    }_{\Gamma_{1,a}}\nonumber\\
    &+  \underbrace{
        \rho\iprod{ \alpha\cpos{w^{k+1}} + (1-\alpha)\cpos{w^{k}} }{ Ax^{k+1}-Ax^* } + \iprod{ r^{k+1} }{ y^{k+1} }
    }_{\Gamma_{1,b}}.\nonumber
\end{align}
Similar to \eqref{convex-smooth}, we utilize the convexity and smoothness of $f(x)$ and obtain
\begin{equation}
	\label{eqn:lmcone-1a}
	\Gamma_{1,a}\geq f(x^{k+1})-f(x^*) - \iprod{ r^{k+1} }{ y } - \frac{(1-\alpha)L_{f}}{2}\nrm{x^{k+1}-x^k}^2.
\end{equation} 
Notice that $Ax^*-b\in \KK$, we also have
\begin{align}
	\label{eqn:lmcone-1b}
	& \frac{1}{\rho}\Gamma_{1,b}
	\geq 
	\iprod{ \alpha\cpos{w^{k+1}} + (1-\alpha)\cpos{w^{k}} }{ Ax^{k+1}-b } + \iprod{ r^{k+1} }{ \frac{y^{k+1}}{\rho} }\\
	& \!\!=
	\iprod{ \alpha\mathcal{P}_{\!\!+}(w^{k+1}) + (1-\alpha)\mathcal{P}_{\!\!+}(w^{k}) }{ r^{k+1} } - (1-\alpha) \iprod{ \mathcal{P}_{\!\!+}(w^{k}) }{ \mathcal{P}_{\!\!-}(w^{k+1}) \!}\nonumber\\
	&\quad\,+ \iprod{ r^{k+1} }{ r^{k+1}-\cpos{w^{k+1}}\!}\nonumber\\
	&\!\!=
	\nrm{r^{k+1}}^{\!2} \!\!\!+\! (1\!-\!\alpha)\!\iprod{ \mathcal{P}_{\!\!+}(w^{k})\!-\!\mathcal{P}_{\!\!+}(w^{k+1}) }{ r^{k+1} } \!-\! (1\!-\!\alpha)\! \iprod{ \mathcal{P}_{\!\!+}(w^{k}) }{ \mathcal{P}_{\!\!-}(w^{k+1})\!}\!, \nonumber
\end{align}
where the last second equality is because the fact that $\langle\cpos{u},\cneg{u}\rangle\equiv0$ for $\forall u$ and the definition of the vector $r^k$.  
Next, we bound the term $\Gamma_2$ similarly to \eqref{Fz}. 
It holds that
\begin{align}
    \label{eqn:lmcone-2}
    &\abs{ \iprod{  F(z^{k})-F(z^{k-1}) }{ z^k-z }_{\Xi\Theta}}\\ 
    \leq \ & \alpha\abs{\langle\nabla_x\Psi(z^k)-\nabla_x\Psi(z^{k-1}),x^k-x\rangle}+\abs{\mu\beta}\abs{\langle r^k-r^{k-1},y^k-y\rangle}\nonumber\\
    \stackrel{(i)}{\leq} \ & \alpha L_f\nrm{ x^k-x^{k-1} }\nrm{ x^k-x }
    +\alpha\rho\nrm{A}\nrm{ \cpos{w^k}-\cpos{w^{k-1}} }\nrm{ x^k-x }\nonumber\\
    &+\abs{\mu\beta} \nrm{  r^{k}-r^{k-1} }\nrm{ y^k-y }\nonumber\\
    \stackrel{(ii)}{\leq} \ &\frac{\alpha L_{f}}2\nrm{x-x^k}^2+\frac{\alpha L_{f}}2\nrm{x^k-x^{k-1}}^2\nonumber\\
    &+\alpha\rho \nrm{A}\nrm{x-x^k}\left(\nrm{A}\nrm{x^{k}-x^{k-1}}+\frac{1}{\rho}\nrm{y^k-y^{k-1}}\right)\nonumber\\
    &+\abs{\mu\beta} \nrm{y-y^k}\left(\nrm{A}\nrm{x^{k}-x^{k-1}}+\frac{1}{\rho}\nrm{y^k-y^{k-1}}\right)\nonumber\\
    \leq \ &\frac{\alpha L_{f}}2\nrm{x-x^k}^2+\frac{\alpha L_{f}}2\nrm{x^k-x^{k-1}}^2\nonumber\\
    &+\alpha\rho \nrm{A}^2\frac{\nrm{x-x^k}^2+\nrm{x^{k}-x^{k-1}}^2}{2}\nonumber\\
    &+\alpha\sqrt{\sigma\tau}\nrm{A}\left(\frac{\nrm{x-x^k}^2}{2\tau}+\frac{\nrm{y^k-y^{k-1}}^2}{2\sigma}\right)\nonumber\\
    &+\abs{\mu\beta}\sqrt{\sigma\tau}\nrm{A}\left(\frac{\nrm{x^{k}-x^{k-1}}^2}{2\tau}+\frac{\nrm{y-y^k}^2}{2\sigma}\right)\nonumber\\
    &+\frac{\abs{\mu\beta}}{\rho}\frac{\nrm{y-y^k}^2+\nrm{y^{k}-y^{k-1}}^2}{2}\nonumber\\
    \stackrel{(iii)}{\leq}\ &\frac{c}{2}\nrm{z^k-z^{k-1}}_{\Lambda^{-1}}^2+\frac{c}{2}\nrm{z-z^k}_{\Lambda^{-1}}^2,\nonumber
\end{align}
where (i) is due to the fact that $\nabla_x\Psi(z^k)=\nabla f(x)+\rho A^\T\cpos{w^k}$ and $f$ is $L_f$-smooth, 
(ii) utilizes the nonexpansiveness of the projection operator and Lemma \ref{lemma:algebra}, 
(iii) is because of the definition of $c$. Substitute $z=z^{k+1}$ into \eqref{eqn:lmcone-2} and apply the inequality, we get
\begin{equation}
    \Gamma_2\geq \frac{1-2c}{2}\nrm{z^{k+1}-z^k}^2_{\Lambda^{-1}}.
\end{equation}
Finally, we note that $\Gamma_3$ can be rewritten as 
\begin{align}
    \label{eqn:lmcone-3}
    \langle r^k, y^{k+1}&\!-\!y^k\rangle
    =\iprod{ r^k-r^{k+1} }{ y^{k+1}-y^{k} }
    +\iprod{ r^{k+1} }{ y^{k+1}-y^{k} }\\
    =&\iprod{ \mathcal{P}_{\!+}(w^{k})\!-\!\mathcal{P}_{\!+}(w^{k+1}) }{ y^{k+1}\!-\!y^{k} }\!-\!\frac{1}{\rho}\nrm{y^{k+1}\!-\!y^{k}}^2\!\!+\!\iprod{ r^{k+1} }{ y^{k+1}\!-\!y^{k} }.\nonumber
\end{align}
Combining the inequalities \eqref{eqn:lmcone-0} - \eqref{eqn:lmcone-3}, we obtain 
\begin{eqnarray}
    \label{eqn:lmcone-4}
    &&\Delta_{k}(\tilde{z})-\Delta_{k+1}(\tilde{z})\\
    &\geq& \Phi(x^{k+1})-\Phi(x^*)  -\langle r^{k+1},y\rangle + \Gamma_4\nonumber\\
    &&+\frac{1-2c}{2}\nrm{ z^{k+1}-z^{k} }^2_{\Lambda^{-1}}-\frac{(1-\alpha)L_{f}}{2}\nrm{x^{k+1}-x^k}^2-\frac{\mu-\beta}{\rho}\nrm{ y^{k+1}-y^k }^2 \nonumber,
\end{eqnarray}
where
\begin{eqnarray*}
	&& \Gamma_4= \rho\nrm{r^{k+1}}^2 - (1\!-\!\alpha) \rho \iprod{ \mathcal{P}_{\!+}(w^{k}) }{ \mathcal{P}_{\!-}(w^{k+1}) }\\
	& & \qquad\,\, -  (1\!-\!\alpha)\rho\iprod{ \mathcal{P}_{\!+}(w^{k+1})\!-\!\mathcal{P}_{\!+}(w^{k}) }{ r^{k+1} } \!+\! (\mu\!-\!\beta)\iprod{ r^{k+1} }{ y^{k+1}-y^{k} }\\
    && \qquad\,\, -(\mu-\beta)\iprod{ \mathcal{P}_{\!+}(w^{k+1})\!-\!\mathcal{P}_{\!+}(w^{k}) }{ y^{k+1}\!-\!y^{k} }.
\end{eqnarray*}
Consequently, we have 
\begin{eqnarray}
	& & \Gamma_4 + (1\!-\!\alpha) \rho \iprod{ \mathcal{P}_{\!+}(w^{k}) }{ \mathcal{P}_{\!-}(w^{k+1})}\nonumber\\
	& = & \rho\nrm{r^{k+1}}^2\!\!-\!2\rho\cdot\iprod{r^{k+1}}{ \frac{1\!-\!\alpha}{2}\left(\cpos{w^{k+1}}\!-\!\cpos{w^{k}}\right) - \frac{\mu\!-\!\beta}{2\rho}\left(y^{k+1}-y^k\right)}\nonumber\\
	& &  -(\mu-\beta)\iprod{ \cpos{w^{k+1}}-\cpos{w^{k}} }{ y^{k+1}-y^{k} } \nonumber\\
	&\geq& -\frac{\rho}{4}\nrm{(1-\alpha)\left(\cpos{w^{k+1}}-\cpos{w^{k}}\right) - \frac{\mu-\beta}{\rho}\left(y^{k+1}-y^{k}\right)
	}^2\nonumber\\
	&&-(\mu-\beta)\iprod{ \cpos{w^{k+1}}-\cpos{w^{k}} }{ y^{k+1}-y^{k} }\nonumber\\
	&= &
	-\frac{\rho(1-\alpha)^2}{4}\nrm{ \cpos{w^{k+1}}-\cpos{w^{k}} }^2
	-\frac{(\mu\!-\!\beta)^2}{4\rho}\nrm{ y^{k+1}-y^{k} }^2\nonumber\\
	&&-\frac{(1+\alpha)(\mu-\beta)}{2}\iprod{ \cpos{w^{k+1}}-\cpos{w^{k}} }{ y^{k+1}-y^{k} }\nonumber\\
	&\geq&
	-\frac{\rho\gamma_0}{4}\nrm{ \cpos{w^{k+1}}-\cpos{w^{k}} }^2-\frac{\gamma_1}{4\rho}\nrm{ y^{k+1}-y^{k} }^2,\nonumber
\end{eqnarray}
where the last inequality is due to the choice $\gamma_0=(1-\alpha)^2+(1+\alpha)\abs{\mu-\beta}$, $\gamma_1=(\mu-\beta)^2+(1+\alpha)\abs{\mu-\beta}$. Consequently, with $\gamma_w=t(2(1-\alpha),\gamma_0)$, Lemma \ref{lemma:algebra} indicates that 
\begin{eqnarray} 
	\Gamma_4 &\!\geq& \!-\frac{\gamma_1}{4\rho}\!\nrm{ y^{k+1}\!\!-\!y^{k} }^2 \!\!-\! \frac{\rho\gamma_0}{4}\!\nrm{ \mathcal{P}_{\!+}(w^{k+1})\!-\!\mathcal{P}_{\!+}(w^{k}) }^2 \!\!-\! (1\!-\!\alpha) \rho \!\iprod{ \mathcal{P}_{\!+}(w^{k}) }{ \mathcal{P}_{\!-}(w^{k+1}) \!}\nonumber\\
	& \geq & \!-\frac{\gamma_1}{4\rho}\!\nrm{ y^{k+1}\!\!-\!y^{k} }^2 \!\!-\! \frac{\rho\gamma_w}{4}\!\nrm{ w^{k+1}\!-\!w^{k} }^2.\nonumber
\end{eqnarray}
Substituting the above inequality into \eqref{eqn:lmcone-4} yields 
$$\Delta_{k}(\tilde{z})\!-\!\Delta_{k+1}(\tilde{z})\geq \Phi(x^{k+1}) \!-\! \Phi(x^*)  \!-\! \langle r^{k+1},y\rangle \!+\! \delta^k,$$
where we define $\gamma_y=\gamma_1+4(\mu-\beta)$ and
\begin{align*}
    \delta^k:=&\frac{1-2c}{2}\nrm{ z^{k+1}-z^{k} }^2_{\Lambda^{-1}}-\frac{(1-\alpha)L_{f}}{2}\nrm{x^{k+1}-x^k}^2 \\
    & -\frac{\rho \gamma_w}{4}\nrm{ w^{k+1}-w^{k} }^2
    -\frac{\gamma_y}{4\rho}\nrm{ y^{k+1}-y^{k} }^2,\\
    =& \left(\frac{1-2c}{2\tau}-\frac{(1-\alpha)L_{f}}{2}-\frac{\rho\gamma_w}{4}A^\T A \right)\|x^{k+1}-x^k\|^2 \\
    & +\frac{\gamma_w}{2}\langle A(x^{k+1}-x^k),y^{k+1}-y^k\rangle
    +\left(\frac{1-2c}{2\sigma}-\frac{\gamma_w+\gamma_y}{4\rho}\right)\|y^{k+1}-y^k\|^2.
\end{align*}
Hence $\delta^k$ is a quadratic form on $x^{k+1}-x^{k}$ and $y^{k+1}-y^{k}$ associated with the matrix $P_c^\prime$. Since $P_c^\prime\succeq 0$, the quadratic form is positive semi-definite, and $\delta^k\geq0$ always holds. This completes the proof. 
\qed
\end{proof}
According to Remark \ref{rmk:explicit}, $s(\cdot)$ can also be set to $\mathbbm{1}_{\KK^*}(\cdot)$ when $\rho>0$. 
Since the above proof is feasible for any convex and lower semi-continuous $s(\cdot)$ and any $y\in\dom\ s$, Lemma \ref{lemma:ineq} also holds for the case where $s(\cdot)=\mathbbm{1}_{\KK^*}(\cdot)$.

\begin{lemma}\label{lemma:deltalow}
    Under the settings of Lemma \ref{lemma:ineq}, we fix a $y^*\in\KK^*$ such that $z^*=(x^*,y^*)$ is the KKT pair. Then it holds that
    $$
    \Delta_k(z^*)\geq \nrm{z^k-z^*}_{\frac{c}{2}\Lambda^{-1}+P_c'}^2.
    $$
    Furthermore, assuming that $P_c'\succeq \diag\left(0,\frac{a}{2\sigma}\right)$ for some $a\geq0$ such that $a+c>0$, we then have
    $$
    \Delta_k(\tilde{z})\geq -\frac{\abs{\mu-\beta}^2\sigma}{2(a+c)\rho^2}\nrm{y-y^*}^2.
    $$
\end{lemma}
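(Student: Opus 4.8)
The plan is to obtain both inequalities from one ``master'' one-step bound and then only change the reference point. Starting from the definition of $\Delta_k$, I would lower bound the extrapolation term by its absolute value through \eqref{eqn:lmcone-2}, namely $\iprod{F(z^k)-F(z^{k-1})}{z-z^k}_{\Xi\Theta}\geq-\frac{c}{2}\nrm{z^k-z^{k-1}}^2_{\Lambda^{-1}}-\frac{c}{2}\nrm{z-z^k}^2_{\Lambda^{-1}}$. This cancels the stored $\frac{c}{2}\nrm{z^k-z^{k-1}}^2_{\Lambda^{-1}}$ and, crucially, removes every dependence on the earlier iterate $z^{k-1}$. Recalling $\nabla_y\Psi(z^k)=-r^k$, it gives the master bound $\Delta_k(z)\geq\frac{1-c}{2}\nrm{z^k-z}^2_{\Lambda^{-1}}-(\mu-\beta)\iprod{r^k}{y^k-y}$, valid for every $z=(x,y)$, where $1-c>0$ since $P_c^\prime\succeq0$ forces $c\leq\tfrac12$.

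For the first inequality I would specialize to $z=z^*$. The pivotal fact is $r^*=-\nabla_y\Psi(z^*)=0$: writing $w^*=Ax^*-b-\tfrac{y^*}{\rho}$ with $Ax^*-b\in\KK$, $-\tfrac{y^*}{\rho}\in\KKc$ and $\iprod{Ax^*-b}{y^*}=0$ (complementarity, Remark \ref{yink*}), Moreau's decomposition (Theorem \ref{Moreau}) gives $\cpos{w^*}=-\tfrac{y^*}{\rho}$ and hence $r^*=0$. Using $r^k=r^k-r^*=\big(\cpos{w^k}-\cpos{w^*}\big)+\tfrac1\rho(y^k-y^*)$, I would expand $-(\mu-\beta)\iprod{r^k}{y^k-y^*}$ and, exactly as in the $\Gamma_4$ estimate inside the proof of Lemma \ref{lemma:ineq}, invoke Lemma \ref{lemma:algebra} with the constants $\gamma_w,\gamma_y$ to recast the residual as the quadratic form of $P_c^\prime$. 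This yields $\Delta_k(z^*)\geq\nrm{z^k-z^*}^2_{\frac{c}{2}\Lambda^{-1}+P_c^\prime}$, and the hypothesis $P_c^\prime\succeq\diag(0,\tfrac{a}{2\sigma})$ then gives the convenient consequence $\Delta_k(z^*)\geq\tfrac{a+c}{2\sigma}\nrm{y^k-y^*}^2$.

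For the second inequality I would apply the master bound at $\tilde z=(x^*,y)$ and split $y^k-y=(y^k-y^*)-(y-y^*)$ together with $r^k=\big(\cpos{w^k}-\cpos{w^*}\big)+\tfrac1\rho(y^k-y^*)$. The piece directed along $y^k-y^*$ recombines into exactly the expression bounded in the first step, so it is controlled from below by $\tfrac{a+c}{2\sigma}\nrm{y^k-y^*}^2$; the piece $\tfrac{1-c}{2\sigma}\nrm{y-y^*}^2$ is nonnegative; and the remaining signed interaction reduces, after the projection coupling is handled, to the cross term $\tfrac{\mu-\beta}{\rho}\iprod{y^k-y^*}{y-y^*}$ coming from the $\tfrac1\rho(y^k-y^*)$ part of $r^k$. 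Balancing this against $\tfrac{a+c}{2\sigma}\nrm{y^k-y^*}^2$ by Young's inequality with weight $\tfrac{a+c}{\sigma}$ produces precisely $-\tfrac{\abs{\mu-\beta}^2\sigma}{2(a+c)\rho^2}\nrm{y-y^*}^2$, which is the claim.

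The step I expect to be the real obstacle is the projection-difference coupling $(\mu-\beta)\iprod{\cpos{w^k}-\cpos{w^*}}{\,\cdot\,}$, which is unsigned and cannot be dominated by $\nrm{y^k-y^*}$ alone. When $\mu=\beta$ (e.g. SOGDA, OGDA) it vanishes and the bound collapses to $\Delta_k(\tilde z)\geq0$; otherwise it must be absorbed into the $\gamma_w$-weighted $\nrm{w^k-w^*}^2$ slack that $P_c^\prime\succeq0$ carries, rather than into the leftover $\tfrac{a+c}{2\sigma}\nrm{y^k-y^*}^2$ budget. For this reason I would in fact run the reduction at the level of the quadratic form—incorporating the $y-y^*$ contributions before invoking $P_c^\prime\succeq0$—so that the semidefiniteness of $P_c^\prime$ simultaneously dominates both the $y^k-y^*$ and the $y-y^*$ projection couplings and leaves only the clean cross term for the final Young step.
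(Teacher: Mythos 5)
Your master bound $\Delta_k(z)\geq\frac{1-c}{2}\nrm{z^k-z}^2_{\Lambda^{-1}}+(\beta-\mu)\iprod{r^k}{y^k-y}$ and your proof of the first inequality are sound and essentially the paper's: specializing to $y=y^*$, using $r^*=-\nabla_y\Psi(z^*)=0$ (which the paper asserts from optimality and you correctly verify via complementarity and Moreau), decomposing $r^k=(\cpos{w^k}-\cpos{w^*})+\frac{1}{\rho}(y^k-y^*)$, and folding the projection coupling into the $P_c'$ form using nonexpansiveness and $\gamma_w\left(\gamma_y-4(\mu-\beta)\right)\geq 4\abs{\mu-\beta}^2$ is exactly what the paper does.

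The genuine gap is in your route to the second inequality. The split $y^k-y=(y^k-y^*)-(y-y^*)$ expands $\frac{1-c}{2\sigma}\nrm{y^k-y}^2$ into $\frac{1-c}{2\sigma}\nrm{y^k-y^*}^2+\frac{1-c}{2\sigma}\nrm{y-y^*}^2-\frac{1-c}{\sigma}\iprod{y^k-y^*}{y-y^*}$; your accounting keeps the first two pieces but silently drops the third, which carries the large coefficient $\frac{1-c}{\sigma}$ and is \emph{not} part of the ``clean cross term'' $\frac{\mu-\beta}{\rho}\iprod{y^k-y^*}{y-y^*}$ you plan to Young away. Once the first inequality has been invoked as a black box (consuming the $x$- and $w$-budget of $P_c'$ at the displacement $(x^k-x^*,y^k-y^*)$), the only surviving budget is $\frac{a+c}{2\sigma}\nrm{y^k-y^*}^2$, and absorbing $\frac{1-c}{\sigma}\iprod{y^k-y^*}{y-y^*}$ by Young would need $a+2c\geq 1$ — which the hypotheses do not supply ($P_c'\succeq 0$ forces $c\leq\frac12$, and only $a+c>0$ is assumed). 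The additional coupling $(\mu-\beta)\iprod{\cpos{w^k}-\cpos{w^*}}{y-y^*}$ that you flag likewise needs $\nrm{x^k-x^*}$-slack that was already spent. Your fallback (``run the reduction at the level of the quadratic form'') points in the right direction but, carried out, it forces you to evaluate $P_c'$ at the displacement $(x^k-x^*,y^k-y)$ — i.e.\ it collapses into the paper's argument, which never splits $y^k-y$ at all. The paper recenters the \emph{residual} instead of the $y$-variable: with $\tilde{w}=Ax^*-b-\frac{y}{\rho}$ it writes $r^k=\tilde{r}+\left(\cpos{w^k}-\cpos{\tilde{w}}\right)+\frac{1}{\rho}(y^k-y)$ and bounds $\nrm{\tilde{r}}=\nrm{\nabla_y\Psi(x^*,y)-\nabla_y\Psi(x^*,y^*)}\leq\frac{1}{\rho}\nrm{y-y^*}$ by the second part of Lemma \ref{lemma:algebra} with $u=u'=Ax^*-b$ fixed. (Note that a naive triangle inequality on $\tilde{r}=\frac{y-y^*}{\rho}+\left(\cpos{\tilde{w}}-\cpos{w^*}\right)$ only gives $\frac{2}{\rho}\nrm{y-y^*}$, degrading the final constant by a factor of four after Young; the sharp factor $\frac{1}{\rho}$ genuinely uses the lemma.) Then the $P_c'$ form at $(x^k-x^*,y^k-y)$, whose $w$-displacement is exactly $w^k-\tilde{w}$, absorbs the projection coupling just as in your first step, and the single leftover $-\frac{\abs{\mu-\beta}}{\rho}\nrm{y-y^*}\nrm{y^k-y}$ is balanced against $\frac{a+c}{2\sigma}\nrm{y^k-y}^2$ to give precisely $-\frac{\abs{\mu-\beta}^2\sigma}{2(a+c)\rho^2}\nrm{y-y^*}^2$.
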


\begin{proof}
According to \eqref{eqn:lmcone-2}, we have
\begin{align*}
    \Delta_k(z) =& \frac1{2}\|z^k-z\|^2_{\Lambda^{-1}}+\frac{c}{2}\|z^{k}-z^{k-1}\|^2_{\Lambda^{-1}}\\
        &+\langle F(z^k)-F(z^{k-1}), z-z^{k}\rangle_{\Xi\Theta}+(\mu-\beta)\iprod{  \nabla_y\Psi(z^k) }{ y^{k}-y }\\
        \geq& \frac{1-c}{2}\|z^k-z\|^2_{\Lambda^{-1}}+(\beta-\mu)\iprod{ r^k }{ y^{k}-y }.
\end{align*}
It remains to bound the last term on the right hand side. 
Define $\tilde{r}$ and $\tilde{w}$ as the value of $r$ and $w$ at $\tilde{z}$, that is, $\tilde{r} := -\nabla_y\Psi(\tilde{z})$, $\tilde{w}:=Ax^*-b-\frac{y}{\rho}$, then we have $\tilde{r} = \frac{y}{\rho}+\cpos{\tilde{w}}$.
Utilizing the definition of $r^k$ and $\tilde{r}$ yields
\begin{align*}
    \iprod{ r^k }{ y^{k}-y }
    =&
    \iprod{ r^k-\tilde{r} }{ y^{k}-y } + \iprod{ \tilde{r} }{ y^{k}-y }\\
    =&
    \iprod{ \cpos{w^k}-\cpos{\tilde{w}} }{ y^{k}-y } + \frac{1}{\rho}\nrm{y^k-y}^2 + \iprod{ \tilde{r} }{ y^{k}-y }.
\end{align*}
The optimality condition indicates that $\nabla_y\Psi(x^*,y^*)=0$, hence we have
$\|\tilde{r}\|=\nrm{\nabla_y\Psi(x^*,y)-\nabla_y\Psi(x^*,y^*)}\leq \frac1\rho\nrm{y-y^*}$ by Lemma \ref{lemma:algebra}. Then it holds
\begin{equation}\label{eqn:conpos-1}
    \begin{split}
        \Delta_k(\tilde{z})
        \geq& \frac{1-c}{2}\|z^k-\tilde{z}\|^2_{\Lambda^{-1}}-\frac{\mu-\beta}{\rho}\nrm{y^k-y}^2\\
        &-\abs{\mu-\beta}\nrm{w^k-\tilde{w}}\nrm{y^k-y}-\frac{\abs{\mu-\beta}}{\rho}\nrm{y-y^*}\nrm{y^k-y}.
    \end{split}
\end{equation}
By the definition of $P_c^\prime$, we have
\begin{equation}\label{eqn:conpos-2}
    \begin{split}
        \nrm{z^k-z}_{P_c^\prime}^2=&\frac{1-2c}{2}\nrm{ z^{k}-\tilde{z} }^2_{\Lambda^{-1}}-\frac{(1-\alpha)L_{f}}{2}\nrm{x^{k}-x^*}^2 \\
        & -\frac{\rho \gamma_w}{4}\nrm{ w^{k}-\tilde{w} }^2
        -\frac{\gamma_y}{4\rho}\nrm{ y^{k}-y}^2.
    \end{split}
\end{equation}
Combining \eqref{eqn:conpos-1} and \eqref{eqn:conpos-2} yields
\begin{equation}\label{eqn:conpos-3}
    \begin{split}
        \Delta_k(\tilde{z})\geq&\|z^k-\tilde{z}\|^2_{\frac{c}{2}\Lambda^{-1}+P_c'}+\frac{\rho\gamma_w}{4}\nrm{w^k-\tilde{w}}^2+\frac{\gamma_y-4(\mu-\beta)}{4\rho}\nrm{y^k-y}^2\\
        &-\abs{\mu-\beta}\nrm{w^k-\tilde{w}}\nrm{y^k-y}-\frac{\abs{\mu-\beta}}{\rho}\nrm{y-y^*}\nrm{y^k-y}.
    \end{split}
\end{equation}
By definition, we have $\gamma_w\geq (1-\alpha)^2+(1+\alpha)\abs{\mu-\beta}$, $\gamma_y-4(\mu-\beta)=(\mu-\beta)^2+(1+\alpha)\abs{\mu-\beta}$, and hence by basic algebra, $\gamma_w\left(\gamma_y-4(\mu-\beta)\right)\geq 4\abs{\mu-\beta}^2$. 
We further set $y=y^*$ in \eqref{eqn:conpos-3}, then the case of $z=z^*$ is proven. Under the assumption that $P_c'\succeq \diag\left(0,\frac{a}{2\sigma}\right)$, it also holds that
\begin{align*}
    \Delta_k(\tilde{z})
    \geq&\|z^k-\tilde{z}\|^2_{\frac{c}{2}\Lambda^{-1}+P_c'}-\frac{\abs{\mu-\beta}}{\rho}\nrm{y-y^*}\nrm{y^k-y}\\
    \geq&\frac{a+c}{2\sigma}\nrm{y^k-y}^2-\frac{\abs{\mu-\beta}}{\rho}\nrm{y-y^*}\nrm{y^k-y}\\
    \geq&-\frac{\abs{\mu-\beta}^2\sigma}{2(a+c)\rho^2}\nrm{y-y^*}^2,
\end{align*}
which completes the proof.
\qed
\end{proof}
Recall that the initial conditions are given by $x^{-1}=x^0$, $y^0=y^{-1}=0$. 
We give the main theorem of the convergence analysis.
\begin{theorem}\label{thm:conic}
    For the conic inequality constrained problems, suppose that Assumptions \ref{assumption1}-\ref{Slater's} are satisfied and $0\leq\alpha\leq1$. If the step size $\tau, \sigma$ and the penalty factor $\rho>0$ and the extrapolation coefficients $\alpha,\beta,\mu$ are properly chosen such that $P_c'\succeq 0$ and $P_c'+c\Lambda^{-1}\succ 0$.
    Then for $\bar x_N = \frac{1}{N}\sum_{k=1}^{N}x^k$, it holds that
    \begin{align*}
        \abs{ \Phi(\bar{x}_N)-\Phi(x^*) }\leq \bigO{\frac{1}{N}},\qquad
        \nrm{\cpos{ A\bar{x}_N-b }} \leq \bigO{\frac{1}{N}},
    \end{align*}
    where $\bigO{\cdot}$ hides constants that depend on $\nrm{x^*-x^0}, \nrm{y^*}$ and the parameters.
\end{theorem}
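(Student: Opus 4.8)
The plan is to mirror the proof of Theorem \ref{thm:affine}, with $r^{k+1}=Ax^{k+1}-b+\cneg{w^{k+1}}$ playing the role that $Ax^{k+1}-b$ had in the equality case, and to resolve the two genuinely new difficulties caused by the cone: (i) averaging the iterates produces $\bar r_N$ rather than the true constraint violation $\cpos{A\bar x_N-b}$, and (ii) unlike Corollary \ref{cor:pos-delta}, the potential $\Delta_N$ need no longer be nonnegative and must instead be bounded below quantitatively. First I would sum the one-step inequality \eqref{onestep1} of Lemma \ref{lemma:ineq} over $k=0,\dots,N-1$ and drop the nonnegative terms $\delta^k$, obtaining
$$\sum_{k=0}^{N-1}\bigl(\Phi(x^{k+1})-\Phi(x^*)-\iprod{r^{k+1}}{y}\bigr)\le \Delta_0(\tilde z)-\Delta_N(\tilde z).$$
Dividing by $N$, applying Jensen's inequality $\Phi(\bar x_N)\le\frac1N\sum_{k=1}^N\Phi(x^k)$, and writing $\bar r_N=\frac1N\sum_{k=1}^N r^k$, this yields $\Phi(\bar x_N)-\Phi(x^*)-\iprod{\bar r_N}{y}\le \frac1N\bigl(\Delta_0(\tilde z)-\Delta_N(\tilde z)\bigr)$ for every $y$.

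The crux is choosing the test point $y$ and converting $\bar r_N$ into the genuine violation. I would take $\hat y=-\gamma\,\cpos{A\bar x_N-b}/\nrm{\cpos{A\bar x_N-b}}$, so that $\nrm{\hat y}=\gamma$ and $\hat y\in\KK^*$ (since $\cpos{A\bar x_N-b}\in\KKc=-\KK^*$). Decomposing $\bar r_N=(A\bar x_N-b)+\bar s_N$, where $\bar s_N=\frac1N\sum_{k=1}^N\cneg{w^k}\in-\KK$ because $-\KK$ is a convex cone, I would prove $-\iprod{\bar r_N}{\hat y}\ge\gamma\nrm{\cpos{A\bar x_N-b}}$. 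Indeed, the Moreau decomposition together with the orthogonality $\iprod{\cpos{u}}{\cneg{u}}=0$ gives $\iprod{A\bar x_N-b}{\cpos{A\bar x_N-b}}=\nrm{\cpos{A\bar x_N-b}}^2$, while $\iprod{\bar s_N}{\cpos{A\bar x_N-b}}\ge0$ since $\iprod{p}{q}\ge0$ for $p\in\KK$ and $q\in\KK^*$. Hence, with $\tilde z=\hat z:=(x^*,\hat y)$,
$$\Phi(\bar x_N)-\Phi(x^*)+\gamma\nrm{\cpos{A\bar x_N-b}}\le \frac1N\bigl(\Delta_0(\hat z)-\Delta_N(\hat z)\bigr).$$

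It then remains to show the right-hand side is $\bigO{1/N}$, i.e. to control $\Delta_0(\hat z)$ from above and $\Delta_N(\hat z)$ from below. By the initialization $x^{-1}=x^0$, $y^{-1}=y^0=0$ one has $z^0=z^{-1}$, so $F(z^0)=F(z^{-1})$ and the difference terms in \eqref{def:Delta} drop out; thus $\Delta_0(\hat z)$ reduces to $\frac{1}{2\tau}\nrm{x^0-x^*}^2+\frac{\gamma^2}{2\sigma}-(\mu-\beta)\iprod{\nabla_y\Psi(z^0)}{\hat y}$, a constant bounded in terms of $\gamma$, $\nrm{x^0-x^*}$ and the parameters. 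For the lower bound I would invoke Lemma \ref{lemma:deltalow} with $a=0$ (admissible since $P_c'\succeq0$ and $c>0$, with the strict condition $P_c'+c\Lambda^{-1}\succ0$ guaranteeing the associated constants are finite and the iterates remain bounded), which gives $\Delta_N(\hat z)\ge-\frac{\abs{\mu-\beta}^2\sigma}{2c\rho^2}\nrm{\hat y-y^*}^2\ge-\frac{\abs{\mu-\beta}^2\sigma}{2c\rho^2}(\gamma+\nrm{y^*})^2$, a constant independent of $N$. Combining yields $\Phi(\bar x_N)-\Phi(x^*)+\gamma\nrm{\cpos{A\bar x_N-b}}\le C/N$ for an explicit constant $C$. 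Finally I would fix any $\gamma>\nrm{y^*}$ and apply Lemma \ref{lemma:sep-gap} to split this single estimate into the two desired bounds $\abs{\Phi(\bar x_N)-\Phi(x^*)}\le\bigO{1/N}$ and $\nrm{\cpos{A\bar x_N-b}}\le\bigO{1/N}$.

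I expect the main obstacle to be the conversion of $\bar r_N$ into the genuine violation $\cpos{A\bar x_N-b}$: the averaged dual residual carries the extra term $\bar s_N\in-\KK$, and one must select the test direction $\hat y$ and combine Moreau orthogonality with the polar-cone pairing $\iprod{p}{q}\ge0$ ($p\in\KK,\,q\in\KK^*$) so that this extra term can only help rather than hurt. A secondary but essential point absent from the equality case is that $\Delta_N$ may now be negative, so Corollary \ref{cor:pos-delta} must be replaced by the quantitative lower bound of Lemma \ref{lemma:deltalow}; its negative right-hand side is precisely the reason $\gamma$ must be chosen as a fixed finite number exceeding $\nrm{y^*}$ rather than sent to infinity.
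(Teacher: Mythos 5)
Your proposal is correct and follows the same architecture as the paper's proof: telescope the one-step inequality of Lemma \ref{lemma:ineq}, choose a test multiplier $\hat y$ of norm $\gamma$, bound $\Delta_0(\hat z)$ using the initialization $z^{-1}=z^0$, $y^0=0$ (which leaves exactly the residual term $(\mu-\beta)\iprod{\nabla_y\Psi(z^0)}{-\hat y}\leq\gamma\abs{\mu-\beta}\nrm{r^0}$), bound $\Delta_N(\hat z)$ from below via Lemma \ref{lemma:deltalow}, and finish with Lemma \ref{lemma:sep-gap} for a fixed $\gamma>\nrm{y^*}$. The one genuine (if local) difference is how you convert the averaged residual $\bar r_N$ into the true violation: the paper takes $\hat y=-\gamma\cpos{\bar r_N}/\nrm{\cpos{\bar r_N}}$, obtains $\gamma\nrm{\cpos{\bar r_N}}$ by Moreau orthogonality, and only then passes to $\nrm{\cpos{A\bar x_N-b}}$ via the appendix Lemma \ref{normlemma} ($\nrm{\cpos{a+b}}\geq\nrm{\cpos{a}}$ for $b\in-\KK$), whereas you aim $\hat y$ directly at $\cpos{A\bar x_N-b}$ and absorb the slack $\bar s_N=\frac1N\sum_k\cneg{w^k}\in-\KK$ through the cone pairing $\iprod{\bar s_N}{\cpos{A\bar x_N-b}}\geq0$. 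Your route is slightly more economical, since it renders Lemma \ref{normlemma} unnecessary; both arguments are valid, and your indexing of $\bar r_N$ is in fact the consistent one.

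One small repair is needed in your lower bound for $\Delta_N$: invoking Lemma \ref{lemma:deltalow} with $a=0$ presumes $c>0$, but $c=C^{\mathrm{conic}}_{\alpha,\beta,\mu}(\tau,\sigma,\rho)=0$ whenever $\alpha=0$ and $\mu\beta=0$ — this covers PDHG and GDA, and for GDA one has $\mu-\beta=1$, so the offending term does not vanish and $a+c=0$ would produce a division by zero. In that regime you must take $a>0$, which is precisely what the hypothesis $P_c'+c\Lambda^{-1}\succ0$ supplies (when $c=0$ it forces $P_c'\succ0$, hence $P_c'\succeq\operatorname{diag}(0,\tfrac{a}{2\sigma})$ for some $a>0$); this is why the paper carries a general $a\geq0$ with $a+c>0$ through the estimate. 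Your parenthetical remark about the strict condition already gestures at this, so it is a one-line fix rather than a gap.
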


\begin{proof}
By Lemma \ref{lemma:ineq}, for $k=0,\cdots,N-1$ and $y$, it holds that
\begin{equation*}
    \begin{split}
        \Delta_{k}(\tilde{z})-\Delta_{k+1}(\tilde{z})\geq\ &\Phi(x^{k+1})-\Phi(x^*)-\langle r^{k+1},y\rangle.
    \end{split}
\end{equation*}
Taking the sum of the above inequality from $0$ to $N - 1$, we get
\begin{equation}
    \begin{split}
        \sum_{k=0}^{N-1} \left(\Phi(x^{k+1})-\Phi(x^*)-\langle r^{k+1},y\rangle\right)
        \leq \Delta_0(\tilde{z})-\Delta_N(\tilde{z}).
    \end{split}
\end{equation}
Therefore, for $\gamma\geq 0$, 
let $\bar{r}_N = \frac1N\sum_{k=0}^{N-1}r^k$, $\hat{y}=-\gamma\cpos{\bar{r}_N}/\|\cpos{\bar{r}_N}\|$ and $\hat z = (x^*,\hat y)$, we have
\begin{align}\label{thm2}
    &\Phi(\bar{x}_N)-\Phi(x^*)+\gamma\|\cpos{\bar{r}_N}\|\\
    =\ & \Phi(\bar{x}_N)-\Phi(x^*)-\iprod{\bar{r}_N}{ \hat{y} }\nonumber\\
    \stackrel{(i)}{\leq}\ & \frac1N\sum_{k=0}^{N-1} \left(\Phi(x^{k+1})-\Phi(x^*)-\langle r^{k+1},\hat{y}\rangle\right)\nonumber\\
    \leq\ & \frac{1}{N}\left(\Delta_0(\hat{z})-\Delta_N(\hat{z})\right)\nonumber\\
    \stackrel{(ii)}{\leq}\ & \frac1N\left(\frac{\nrm{x^0-x^*}^2}{2\tau}+\frac{\gamma^2}{2\sigma}+\gamma\abs{\mu-\beta}\nrm{r^0}+\frac{\abs{\mu-\beta}^2\sigma}{(a+c)\rho^2}\left(\gamma^2+\nrm{y^*}^2\right)\right),\nonumber
\end{align}
where (i) utilizes the convexity of $\Phi$ and $r$ and (ii) is due to $P_c'+c\Lambda^{-1}\succ 0$ and Lemma \ref{lemma:deltalow}.
Since $\bar{r}_N = A\bar{x}_N-b+\frac1N\sum_{i=0}^{N-1}\cneg{w^{k}}$ and $\frac1N\sum_{i=0}^{N-1}\cneg{w^{k}}\in-\KK$, using Lemma \ref{normlemma} in appendix, we have
\begin{equation}\label{thm3}
    \|\cpos{\bar{r}_N}\|\geq\|\cpos{A\bar{x}_N-b}\|.
\end{equation}
Combining \eqref{thm3} with \eqref{thm2} yields
\begin{equation*}
    \Phi(\bar{x}_N)-\Phi(x^*)+\gamma\|\cpos{ A\bar{x}_N-b }\|
    \leq \bigO{\frac{1}{N}},
\end{equation*}
where $\bigO{\cdot}$ hides constants that depend on $\nrm{x^*-x^0}, \nrm{y^*}$ and the parameters.
The proof is completed by setting some $\gamma>\nrm{y^*}$ and directly applying Lemma \ref{lemma:sep-gap}.
\qed
\end{proof}

Next, we substitute the parameters into the step size conditions $P_c^\prime\succeq 0$ and analyze the sufficient conditions on step sizes specifically.

\vspace{0.2cm}
\noindent\textbf{SOGDA}\,\, Set $\mu=1, \alpha=0, \beta=1$. For any $\rho>0$, $P_c^\prime\succeq0$ can be guaranteed if
\[
    \sqrt{\sigma\tau}\nrm{A}+\frac{\sigma}{\rho}\leq \frac{3}{8}, \quad
    \frac1\tau\geq 4L_f+\rho\nrm{A}^2\frac{\rho}{\frac{3}{8}\rho-\sigma}.
\]

\noindent\textbf{PDHG}\,\,  Set $\mu=0, \alpha=0, \beta=0$. For any $\rho>0$, $P_c^\prime\succeq0$ can be guaranteed if
\[
    \sigma< \frac{3}{2}\rho,\quad \frac1\tau\geq L_f+\rho\nrm{A}^2\frac{\rho}{\frac{3}{2}\rho-\sigma}.
\]

\noindent\textbf{CP}\,\,  Set $\mu=0, \alpha=0, \beta=1$. For any $\rho>0$, $P_c^\prime\succeq0$ can be guaranteed if
\[\frac1{\tau}\geq L_f+(\rho+\sigma)\nrm{A}^2.\]

\noindent\textbf{GDA}\,\,  Set $\mu=1, \alpha=0, \beta=0$. For any $\rho>0$, $P_c^\prime\succeq0$ can be guaranteed if
\[
    \sigma< \frac{\rho}{4},\quad \frac1\tau\geq L_f+\rho\nrm{A}^2\frac{\rho-3\sigma}{\rho-4\sigma}.
\]

\noindent\textbf{OGDA}\,\,  Set $\mu=1, \alpha=1, \beta=1$. For any $\rho>0$, $P_c^\prime\succeq0$ can be guaranteed if
\[
    \maxop{ \tau L_{f_{\rho}} , \frac{\sigma}{\rho} }+\sqrt{\sigma\tau}\nrm{A}\leq \frac{1}{2}.
\]

Combining the analysis in Section \ref{sec:equality}, we obtain the complete ergodic convergence of the unified primal-dual framework.
From the results, we can observe that the properly selected penalty enables the convergence of several algorithms.
For example, the PDHG method based on Lagrangian function has no convergence guarantee generally.
However, if we add a penalty term to the Lagrangian function and apply PDHG on the augmented Lagrangian function, the algorithm is guaranteed to converge without further assumptions.
The penalty term makes the convex objective function into a strongly convex function along at least one direction, which brings the benefits of convergence.
The similar benefit from the penalty term is also reflected in GDA and SOGDA.
They converge if and only if the penalty parameter $\rho>0$.

\section{The non-ergodic convergence}\label{sect:smooth}
In the previous section, we have established the sublinear ergodic convergence rate of our primal-dual algorithm framework \eqref{generalalgo}. In this section, we establish the non-ergodic convergence for the last iterate of our algorithm. For simplicity, we only consider the affine equality constrained problem where $\KK=\{0\}$. Pointwise convergence to a KKT pair of problem \eqref{prob} can be directly obtained from our analysis framework. Furthermore, under the error bound condition (e.g. strong convexity), if we properly choose the algorithmic parameters, then linear convergence to a KKT pair can be achieved. 
Next, we define the local error bound (LEB) condition of problem \eqref{prob}.
\begin{definition}\label{def:errorbound}
For the affine equality constrained problem \eqref{prob} with $\KK=\{0\}$, denote $z = [x;y]\in\RR^{n+m}$, we define a set-valued operator $T:\RR^{n+m}\rightrightarrows \RR^{n+m}$ as
\[\begin{aligned}
T: z=[x; y]^\top \mapsto [\partial \Phi(x)-A^\top y; Ax-b]^\top.
\end{aligned}\]
Let $\cZ^*$ be the set of all KKT pairs of problem \eqref{prob}, we say $T$ satisfies (LEB) if for every $z^*\in\cZ^*$, there exists $\epsilon>0, M>0$ such that
\[\begin{aligned}
\dist{z, \cZ^*}\leq M \dist{T(z), 0}, \qquad \forall z\ \st \dist{z, z^*}\leq \epsilon.
\end{aligned}\]
\end{definition}
\noindent This type of error bound condition is satisfied for a large class of constrained optimization problems. We state a few important examples as follows. 

\begin{example}
	Consider the affinely constrained strongly convex  problem: 
	\begin{align}\label{scs}
		\min_{x} f(x), \quad\st \; Ax=b.
	\end{align}
	If $f$ is $L_f$-smooth and $\mu_f$-strongly convex, then (LEB) is satisfied for some $M>0$. 
	Moreover, this result can also be extended to the case where $f$ is $\mu_f$-strongly convex restricted to the hyperplane $\{x\!:Ax\!=\!b\!\}$.
\end{example}

\begin{example}
	Consider the two-block affinely constrained convex  problem:
	$$\min_{x_1,x_2} f(x_1)\!+\!h(x_2), \st A_1x_1\!+\!A_2x_2=b.$$ The operator $T$ introduced by Definition \ref{def:errorbound} agrees with the operator $T_{KKT}$ introduced in \cite{yuan2020discerning}. By \cite[Theorem 61]{yuan2020discerning}, (LEB) holds if the following assumptions are satisfied
	\begin{itemize}
		\item $A_1$ has full row rank, $A_2$ has full column rank.
		\item $f(x_1)=g(Lx_1)+\iprod{q}{x_1}$ with $g$ being smooth and strongly convex.
		\item $h$ is either a convex piecewise linear-quadratic function, or a \textit{$\ell_{1, q}$-norm regularizer with $q \in[1,2]$}, or a \textit{sparse-group LASSO regularizer}.
	\end{itemize}
    In particular, the BP problem and the L1L1 problem considered in the numerical experiments are both included in this class.
\end{example}
More examples can be found in \cite{ye1997necessary}. 
Next, let us establish the last-iterate non-ergodic convergence of our method. 
\begin{theorem}\label{thm:ogda-limit}
	For problem \eqref{prob} with $\KK=\{0\}$, suppose that Assumptions \ref{assumption1}-\ref{Slater's} hold and the algorithmic parameters are chosen so that $P_c\succ0$. For the iterate sequence $\{z_k\}_{k=0}^{+\infty}$ generated by our algorithm, then there exists $z^*\in\cZ^*$ such that $\lim_{k\to\infty}z^{k} = z^*$.
\end{theorem}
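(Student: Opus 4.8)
The plan is to run a Fej\'er-monotonicity / Opial argument on the Lyapunov sequence $\Delta_k(\cdot)$ defined in \eqref{def:Delta}, using the strict positivity $P_c\succ 0$ to extract summable descent, boundedness, and vanishing increments. First I fix an arbitrary KKT pair $z^*=(x^*,y^*)\in\cZ^*$ as the reference point in Lemma \ref{lemma:eq}. Since $A^\T y^*\in\partial\Phi(x^*)$ and $Ax^*=b$, convexity of $\Phi$ gives $\Phi(x^{k+1})-\Phi(x^*)-\langle Ax^{k+1}-b,y^*\rangle\geq 0$, so \eqref{onestep} reduces to $\Delta_k(z^*)-\Delta_{k+1}(z^*)\geq\delta^k\geq 0$. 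Together with $\Delta_k(z^*)\geq 0$ from Corollary \ref{cor:pos-delta}, the sequence $\{\Delta_k(z^*)\}$ is non-increasing and bounded below, hence convergent; this forces $\delta^k\to 0$ and $\sum_k\delta^k<\infty$. The very same monotonicity holds verbatim for \emph{every} reference point in $\cZ^*$.

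Next I cash in $P_c\succ 0$. Recall $\delta^k\geq\delta_2^k$, where $\delta_2^k$ is the quadratic form in $(Ax^{k+1}-b,\,x^{k+1}-x^k,\,y^{k+1}-y^k)$ with matrix $P_c$; strict positivity yields $\delta_2^k\geq\lambda_{\min}(P_c)\bigl(\nrm{Ax^{k+1}-b}^2+\nrm{z^{k+1}-z^k}^2\bigr)$, so $\delta^k\to 0$ gives
\[
\nrm{z^{k+1}-z^k}\to 0,\qquad \nrm{Ax^{k+1}-b}\to 0.
\]
Moreover $P_c\succ 0$ implies $Q_c\succ 0$: the $(x,y)$-block of $P_c$ is a principal submatrix, hence positive definite, and $Q_c$ exceeds it by a strictly positive diagonal. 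Thus the lower bound $\Delta_k(z^*)\geq\nrm{z^k-z^*}_{Q_c}^2$ from Corollary \ref{cor:pos-delta}, combined with $\Delta_k(z^*)\leq\Delta_0(z^*)$, shows that $\{z^k\}$ is bounded.

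I then identify cluster points with KKT pairs by passing to the limit in the optimality inclusion \eqref{subgrad*}. For $\KK=\{0\}$ the operator $F$ is globally Lipschitz and $z^{k+1}-z^k\to 0$, so along any subsequence $z^{k_j}\to\bar z$ the three evaluations $F(z^{k-1}),F(z^k),F(z^{k+1})$ all tend to $F(\bar z)$ and the $\Xi,\Theta$-combination collapses, giving $v^{k+1}\to -F(\bar z)$ with $v^{k+1}\in\partial R(z^{k+1})$. Closedness of the graph of $\partial R$ then yields $0\in\partial R(\bar z)+F(\bar z)$, i.e. $A\bar x=b$ and $A^\T\bar y\in\partial\Phi(\bar x)$, so $0\in T(\bar z)$ and $\bar z\in\cZ^*$.

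Finally I invoke Opial. For any fixed $\bar z=(x^*,y^*)\in\cZ^*$, the momentum term $\tfrac{c}{2}\nrm{z^k-z^{k-1}}_{\Lambda^{-1}}^2$, the cross term $\langle F(z^k)-F(z^{k-1}),\bar z-z^k\rangle_{\Xi\Theta}$, and the term $(\mu-\beta)\langle\nabla_y\Psi(z^k),y^k-y^*\rangle=-(\mu-\beta)\langle Ax^k-b,y^k-y^*\rangle$ appearing in $\Delta_k(\bar z)$ all vanish in the limit, using $z^{k+1}-z^k\to 0$, the Lipschitzness of $F$, the boundedness of $\{z^k\}$, and $\nrm{Ax^k-b}\to 0$. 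Hence $\Delta_k(\bar z)-\tfrac12\nrm{z^k-\bar z}_{\Lambda^{-1}}^2\to 0$, and since $\{\Delta_k(\bar z)\}$ converges, $\lim_k\nrm{z^k-\bar z}_{\Lambda^{-1}}$ exists for every $\bar z\in\cZ^*$. Combined with the fact that every cluster point lies in $\cZ^*$, Opial's lemma delivers a unique cluster point, so $z^k\to z^*$ for some $z^*\in\cZ^*$. I expect the main obstacle to be precisely this asymptotic reduction of the Lyapunov function to the genuine squared distance $\tfrac12\nrm{z^k-\bar z}_{\Lambda^{-1}}^2$: it is exactly here that one must simultaneously exploit the vanishing increments and the vanishing feasibility residual $\nrm{Ax^k-b}\to 0$ (both consequences of $P_c\succ 0$) before Opial can be applied, with the closed-graph passage to the KKT inclusion being the secondary delicate point.
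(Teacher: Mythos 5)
Your proof is correct, and its skeleton coincides with the paper's: anchor $\Delta_k$ at a KKT pair via Lemma \ref{lemma:eq}, use Corollary \ref{cor:pos-delta} together with $P_c\succ0$ to obtain monotonicity, boundedness, and summable increments $\sum_k\nrm{z^{k+1}-z^k}^2_{\Lambda^{-1}}<\infty$, and identify subsequential limits with KKT pairs by passing to the limit in the optimality condition (the paper does this equivalently through continuity of the proximal fixed-point map rather than the closed graph of $\partial R$ applied to \eqref{subgrad*}; both are fine). Where you genuinely diverge is the finishing step. You establish the full Fej\'er--Opial property --- $\lim_k\nrm{z^k-\bar z}_{\Lambda^{-1}}$ exists for \emph{every} $\bar z\in\cZ^*$ --- by proving the asymptotic reduction $\Delta_k(\bar z)-\tfrac12\nrm{z^k-\bar z}^2_{\Lambda^{-1}}\to0$, which forces you to additionally extract $\nrm{Ax^k-b}\to0$ from the quadratic form $\delta_2^k$ and to kill the cross term and the $(\mu-\beta)$ term using Lipschitzness of $F$ and boundedness of the iterates. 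The paper sidesteps exactly the step you flag as the main obstacle: it re-anchors the Lyapunov function at the cluster point $\bar z$ itself and invokes the \emph{upper} bound of Corollary \ref{cor:pos-delta}, $\Delta_{k_n}(\bar z)\leq\nrm{z^{k_n}-\bar z}^2_{\Lambda^{-1}}+c\nrm{z^{k_n}-z^{k_n-1}}_{\Lambda^{-1}}\to0$, so that monotonicity of $\{\Delta_k(\bar z)\}$ alone drives the whole sequence $\Delta_k(\bar z)\to0$, and the lower bound $\Delta_k(\bar z)\geq\lambda\nrm{z^k-\bar z}^2_{\Lambda^{-1}}$ then gives $z^k\to\bar z$ --- no asymptotic reduction, no feasibility residual, no Opial lemma needed. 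Your route is heavier but buys slightly more: convergence of the distance to every KKT pair and the fact that $\nrm{Ax^k-b}\to0$ along the iterates. One cosmetic nit: your justification that $Q_c\succ0$ via ``a strictly positive diagonal'' increment is loose when $c=0$, but the principal-submatrix half of your argument already suffices, since the $(x,y)$-block of $P_c\succ0$ is positive definite and $Q_c$ dominates it by a nonnegative diagonal.
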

\begin{proof}
    Fix a $z^*=(x^*,y^*)\in\cZ^*$, then by Lemma \ref{lemma:eq}, it holds that
    \[\begin{aligned}
        \Delta_{k}(z^*)-\Delta_{k+1}(z^*)\geq\ &\Phi(x^{k+1})-\Phi(x^*)-\langle Ax^{k+1}-b,y^*\rangle+\delta^{k}\geq \delta^k
    \end{aligned}\]
    due to the optimality condition $Ax^* = b$ and $A^\T y^*\in\partial \Phi(x^*)$. Note that by Corollary \ref{cor:pos-delta} and the analysis of Lemma \ref{lemma:eq}, we have $\Delta_k(z^*)\geq \|z^k-z^*\|_{Q_c}^2$ and  $\delta_k\geq\delta_2^k =  \|u\|_{P_c}^2$, $u = [Ax^{k+1}-b;z^{k+1}-z^k]$, where $Q_c$ is defined in \eqref{def:Qc}. Therefore, because $P_c\succ 0$, we know $Q_c\succ0$ and  there exists $\lambda>0$ such that $\delta^{k}_2\geq \lambda\nrm{z^{k+1}-z^k}_{\Lambda^{-1}}^2$ and $\Delta_k(z^*)\geq \lambda\nrm{z^*-z^k}_{\Lambda^{-1}}^2$, for all $k\geq 0$.
    Thus the sequence $\{\Delta_k(z^*)\}_{k=0}^{+\infty}$ is monotonically decreasing, and hence the sequence $\{z^k\}_{k=0}^{+\infty}$ is bounded and has a convergent subsequence $\{z^{k_n}\}_{n=0}^{+\infty}$. Suppose this subsequence  converges to some point $\bar z=[\bar x;\bar y]$. Note that
    $$\lambda\sum_{k=0}^{+\infty} \|z^{k+1}-z^k\|^2_{\Lambda^{-1}}\leq \sum_{k=0}^{+\infty} \delta^k \leq \Delta_0(z^*)<+\infty.$$
     Consequently, we have $\lim_{n\to\infty} \nrm{z^{k+1}-z^{k}}=0$. Then taking $k=k_n$ in \eqref{generalalgo} and let $n\to\infty$ yields
    \begin{align*}
        \bar x = \prox_{\tau h}\left[\bar x-\tau \nabla_x \Psi(\bar x, \bar y)\right],\qquad
        \bar y = \prox_{\sigma s}\left[\bar y + \eta \nabla_y \Psi(\bar x, \bar y)\right].
    \end{align*}
    That is, $\bar z\in\cZ^*$.  Let us set $z^* = \bar z$, then $\lim_{n\to\infty} \|z^{k_n}-z^*\|^2_{\Lambda^{-1}} = 0$. By Corollary  \ref{cor:pos-delta}, 
    $$\Delta_{k_n}(z^*)\leq \|z^{k_n}-z^*\|^2_{\Lambda^{-1}} + c\|z^{k_n}-z^{k_n-1}\|_{\Lambda^{-1}}\to0\quad\mbox{as}\quad n\to\infty.$$ 
    Finally, due to the fact that the sequence $\{\Delta_k(z^*)\}_{k=0}^{+\infty}$ is monotonically decreasing, a subsequence converging to zero also indicates the convergence of the whole sequence to zero.   Finally, we conclude that $\|z^k\!-\!z^*\|_{\Lambda^{-1}}^2\!\leq\! \lambda^{-1}\Delta_k(z^*)\!\to\!0$ as $k\!\to\!+\infty$.
\qed
\end{proof}

Next, we provide the analysis of the linear convergence for the scheme \eqref{generalalgo}. 
\begin{lemma}\label{cor-desc}
Under the setting of Theorem  \ref{thm:ogda-limit}, for $\forall k\geq0$, there exists $C>0$ such that
\begin{align*}
    &\rho\nrm{Ax^{k+1}\!-\!b}^2\!+\!\tau\dist{\partial \Phi(x^{k+1})\!-\!A^\T y^{k+1},0}^2\!+\!\nrm{z^{k+1}\!-\!z^{k}}_{\Lambda^{-1}}^2\!\leq\! C\delta^k.
\end{align*} 
\end{lemma}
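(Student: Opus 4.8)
The plan is to bound each of the three quantities on the left-hand side by a uniform multiple of $\delta^k=\delta_1^k+\delta_2^k$, the two nonnegative pieces produced in the proof of Lemma \ref{lemma:eq}. Recall from the proof of Theorem \ref{thm:ogda-limit} that $\delta_2^k=\nrm{u}_{P_c}^2$ with $u=[Ax^{k+1}-b;\,z^{k+1}-z^k]$. Since $P_c\succ0$, we have $\delta^k\geq\delta_2^k\geq\lambda_{\min}(P_c)\big(\sqr{Ax^{k+1}-b}+\sqr{z^{k+1}-z^k}\big)$, which immediately controls the first and third terms: both $\rho\sqr{Ax^{k+1}-b}$ and $\nrm{z^{k+1}-z^k}_{\Lambda^{-1}}^2\leq\frac{1}{\min\{\tau,\sigma\}}\sqr{z^{k+1}-z^k}$ are $\leq C\delta^k$. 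It remains to handle the middle term $\tau\dist{\partial\Phi(x^{k+1})-A^\T y^{k+1},0}^2$.

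For this I would produce an explicit element of $\partial\Phi(x^{k+1})-A^\T y^{k+1}$ from the optimality condition of the $x$-update in \eqref{generalalgo}. Writing $\Phi=f+h$ and using that there is $\xi^{k+1}\in\partial h(x^{k+1})$ with $\xi^{k+1}=-\frac1\tau(x^{k+1}-x^k)-(1+\alpha)g_x^k+\alpha g_x^{k-1}$, the vector $\eta:=\nabla f(x^{k+1})+\xi^{k+1}-A^\T y^{k+1}$ lies in $\partial\Phi(x^{k+1})-A^\T y^{k+1}$. Substituting $g_x^j=\nabla f_{\rho}(x^j)-A^\T y^j$ (valid for $\KK=\{0\}$, where $\cpos{\cdot}$ is the identity) and grouping, $\eta$ becomes a sum of $\nabla f(x^{k+1})-\nabla f(x^k)$, $A^\T(y^{k+1}-y^k)$, $\frac1\tau(x^{k+1}-x^k)$, $\rho A^\T(Ax^k-b)$, $\alpha\big(\nabla f_{\rho}(x^k)-\nabla f_{\rho}(x^{k-1})\big)$ and $\alpha A^\T(y^k-y^{k-1})$. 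Using Lipschitz continuity of $\nabla f$ and $\nabla f_{\rho}$, the triangle inequality, and $\nrm{Ax^k-b}\leq\nrm{Ax^{k+1}-b}+\nrm{A}\nrm{x^{k+1}-x^k}$, one gets $\nrm{\eta}^2\leq C'\big(\sqr{z^{k+1}-z^k}+\sqr{Ax^{k+1}-b}+\sqr{z^k-z^{k-1}}\big)$, so $\tau\dist{\partial\Phi(x^{k+1})-A^\T y^{k+1},0}^2\leq\tau\nrm{\eta}^2$ is controlled once we also dominate the \emph{backward} increment $\sqr{z^k-z^{k-1}}$ by $\delta^k$.

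The main obstacle is exactly this last point: $\delta_2^k$ only sees the forward increment $z^{k+1}-z^k$, and the crude estimate \eqref{Fz} gives only $\delta_1^k\geq0$, not a positive lower bound in $\sqr{z^k-z^{k-1}}$. To extract one I would revisit the cross term $\iprod{F(z^k)-F(z^{k-1})}{z^k-z^{k+1}}_{\Xi\Theta}$: each of its three products pairs one factor from $z^k-z^{k-1}$ with one factor from $z^{k+1}-z^k$, so applying Young's inequality in the unbalanced form (weight $\lambda\in(0,1)$ on the $z^k-z^{k-1}$ side and $1/\lambda$ on the $z^{k+1}-z^k$ side) and recalling $c=C^{\mathrm{affine}}_{\alpha,\beta,\mu}(\tau,\sigma,\rho)$ yields $\iprod{F(z^k)-F(z^{k-1})}{z^k-z^{k+1}}_{\Xi\Theta}\leq\frac{\lambda c}{2}\nrm{z^k-z^{k-1}}_{\Lambda^{-1}}^2+\frac{c}{2\lambda}\nrm{z^{k+1}-z^k}_{\Lambda^{-1}}^2$. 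Plugging this into the definition of $\delta_1^k$ gives $\delta_1^k\geq\frac{(1-\lambda)c}{2}\nrm{z^k-z^{k-1}}_{\Lambda^{-1}}^2-\frac{(1-\lambda)c}{2\lambda}\nrm{z^{k+1}-z^k}_{\Lambda^{-1}}^2$, whose leading coefficient is strictly positive whenever $c>0$, which is guaranteed as soon as $\alpha>0$, the only regime in which $z^k-z^{k-1}$ actually appears in $\eta$.

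Finally I would fix $\lambda$ close enough to $1$ that the negative $\nrm{z^{k+1}-z^k}_{\Lambda^{-1}}^2$ term above is dominated by half of the quadratic form $\delta_2^k$ (using $P_c\succ0$ together with $\nrm{z^{k+1}-z^k}_{\Lambda^{-1}}^2\leq\frac{1}{\min\{\tau,\sigma\}}\sqr{z^{k+1}-z^k}$). Adding $\delta_1^k$ and $\delta_2^k$ then gives $\delta^k\geq c_1\nrm{z^k-z^{k-1}}_{\Lambda^{-1}}^2+\frac12\delta_2^k$ for some $c_1>0$, so that $\delta^k$ simultaneously dominates $\sqr{z^k-z^{k-1}}$, $\sqr{z^{k+1}-z^k}$ and $\sqr{Ax^{k+1}-b}$. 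Substituting these three controls into the bound on $\nrm{\eta}^2$ and collecting the constants into a single $C>0$, which is manifestly independent of $k$ since every ingredient depends only on the fixed data $L_f,L_{f_{\rho}},\tau,\sigma,\rho,\nrm{A},\alpha,\beta,\mu,\lambda_{\min}(P_c),\lambda$, completes the proof; in the degenerate case $\alpha=0$ the vector $\eta$ contains no backward increment and the claim follows from the $\delta_2^k$ estimate alone.
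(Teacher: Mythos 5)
Your proposal is correct and follows essentially the same route as the paper's proof: both control $\rho\nrm{Ax^{k+1}-b}^2$ and the forward increment through $\delta_2^k$ and $P_c\succ0$, both produce a subgradient witness of $\partial\Phi(x^{k+1})-A^\T y^{k+1}$ from the optimality condition of the $x$-update, and both recover the backward increment $\nrm{z^k-z^{k-1}}_{\Lambda^{-1}}^2$ by re-estimating the cross term in $\delta_1^k$ with unbalanced Young weights (the paper fixes the weights numerically, obtaining $c\nrm{z^{k}-z^{k-1}}_{\Lambda^{-1}}^2\leq 4\delta_1^k+10c\nrm{z^{k+1}-z^{k}}_{\Lambda^{-1}}^2$, whereas you parameterize by $\lambda$ and absorb the forward excess into $\delta_2^k$ --- the same mechanism with different constant bookkeeping). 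Your explicit handling of the $\rho A^\T(Ax^k-b)$ piece of $g_x^k$, bounded via $\nrm{Ax^k-b}\leq\nrm{Ax^{k+1}-b}+\nrm{A}\nrm{x^{k+1}-x^k}$, is if anything slightly more careful than the paper's corresponding step.
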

\begin{proof}
    Because $P_c\succ0$, there exists $\lambda_1,\lambda_2>0$ such that $P_c\succeq \diag\left(\lambda_1 \rho, \lambda_2\Lambda^{-1}\right)$. Then from the definition of $\delta_2^k$ in Lemma \ref{lemma:eq}, we have
    \begin{align*}
        \delta_2^k\geq \lambda_1\rho\nrm{Ax^{k+1}-b}^2+\lambda_2\nrm{z^{k+1}-z^{k}}_{\Lambda^{-1}}^2.
    \end{align*}
    It remains to bound $\dist{\partial \Phi(x^{k+1})-A^\T y^{k+1},0}^2$ by $\delta^k$.  From \eqref{Fz}, we have
    \begin{align*}
        \delta_1^k=
        &\frac{c}{2}\|z^{k+1}\!-\!z^{k}\|^2_{\Lambda^{-1}}\!+\!\frac{c}{2}\|z^{k}\!-\!z^{k-1}\|^2_{\Lambda^{-1}}
        \!-\!\iprod{ F(z^{k})\!-\!F(z^{k-1}) }{ z^k\!-\!z^{k+1} }_{\Xi\Theta}\\
        \geq &
        \frac{c}{2}\|z^{k+1}\!-\!z^{k}\|^2_{\Lambda^{-1}}\!+\!\frac{c}{2}\|z^{k}\!-\!z^{k-1}\|^2_{\Lambda^{-1}}
        \!-\!\alpha L_{f_\rho}\nrm{x^k\!-\!x^{k-1}}\nrm{x^{k+1}\!-\!x^k}\\
        &-\alpha \nrm{A}\nrm{y^k\!-\!y^{k-1}}\nrm{x^{k+1}\!-\!x^k}
        \!-\!\abs{\mu\beta} \nrm{A}\nrm{x^k\!-\!x^{k-1}}\nrm{y^{k+1}\!-\!y^k}\\
        \geq &
        \frac{c}{2}\|z^{k+1}\!-\!z^{k}\|^2_{\Lambda^{-1}}\!+\!\frac{c}{2}\|z^{k}\!-\!z^{k-1}\|^2_{\Lambda^{-1}}
        \!-\!\frac{c}{\tau}\nrm{x^k\!-\!x^{k-1}}\nrm{x^{k+1}\!-\!x^k}\\
        &-\frac{c}{\sqrt{\tau\sigma}}\nrm{y^k\!-\!y^{k-1}}\nrm{x^{k+1}\!-\!x^k}
        \!-\!\frac{c}{\sqrt{\tau\sigma}}\nrm{x^k\!-\!x^{k-1}}\nrm{y^{k+1}\!-\!y^k}\\
        \geq &
        \frac{c}{4\tau}\nrm{x^{k}\!-\!x^{k-1}}^2\!+\!\frac{c}{4\sigma}\nrm{y^{k}\!-\!y^{k-1}}^2
        \!-\!\frac{5c}{2\tau}\nrm{x^{k+1}\!-\!x^{k}}^2\!-\!\frac{3c}{2\sigma}\nrm{y^{k+1}\!-\!y^{k}}^2.
    \end{align*}
    Hence, it holds that $c\nrm{z^{k}-z^{k-1}}_{\Lambda^{-1}}^2\leq 4\delta_1^k+10c\nrm{z^{k+1}-z^{k}}_{\Lambda^{-1}}^2$.

    \noindent Recall the update rule of primal variable in \eqref{generalalgo}, we have
    \begin{equation}\label{xupdate}
        x^{k+1} = \prox_{\tau h}\left[x^k-\tau \left((1+\alpha) g_x^k-\alpha g_x^{k-1}\right)\right].
    \end{equation}
    According to optimality condition of \eqref{xupdate}, there exists $v_x^{k+1}\in\partial h(x^{k+1})$ such that 
    \[
        v_x^{k+1}+\frac1\tau(x^{k+1}-x^k)+(1+\alpha)g_x^k-\alpha g_x^{k-1}=0.
    \]
    Rearranging the above equation yields
    \begin{align*}
        &\|v_x^{k+1}+g_x^{k+1}\|\\ 
        =\ &\left\|-\frac1\tau(x^{k+1}-x^k)+g_x^{k+1}-g_x^k-\alpha(g_x^k-g_x^{k-1})\right\|\\
        \leq\ &\! \frac1{\tau}\nrm{x^{k+1}-x^k}+\|g_x^{k+1}-g_x^k\|+\alpha\|g_x^k-g_x^{k-1}\|\\
        \leq\ &\! (\frac1\tau\!+\!L_{f_\rho})\!\nrm{x^{k+\!1}\!\!-\!x^k}\!+\!\nrm{A}\!\nrm{y^{k+\!1}\!\!-\!y^k} \!+\!\alpha L_{\!f_{\!\rho}}\!\nrm{x^{k}\!\!-\!x^{k-\!1}}\!+\!\alpha\! \nrm{A}\!\nrm{y^{k}\!\!-\!y^{k-\!1}}\\
        \leq\ &\! (c\!+\!1)\!\left(
            \frac{\nrm{x^{k+1}\!-\!x^k}}{\tau}\!+\!\frac{\nrm{y^{k+1}\!-\!y^k}}{\sqrt{\sigma\tau}}
        \right) \!+\!c\left(
            \frac{\nrm{x^{k}\!-\!x^{k-1}}}{\tau}\!+\!\frac{\nrm{y^{k}\!-\!y^{k-1}}}{\sqrt{\sigma\tau}}
        \right)
    \end{align*}
    where the last inequality is due to the definition of $c$.
    Therefore, we have
    \begin{align*}\label{gradboundx}
        &\tau \dist{\partial h(x^{k+1})+\nabla f(x^{k+1})-A^\T y^{k+1},0}^2\\
        \stackrel{(i)}{\leq}\ & \tau\|v_x^{k+1}+\nabla f(x^{k+1})-A^\T y^{k+1}\|^2\\
        \stackrel{(ii)}{\leq}\, & 4(c+1)^2\nrm{z^{k+1}-z^{k}}_{\Lambda^{-1}}^2 + 4c^2\nrm{z^{k}-z^{k-1}}_{\Lambda^{-1}}^2\\
        \leq\ & 4c\delta_1^k+(10c^2+4(c+1)^2)\nrm{z^{k+1}-z^{k}}_{\Lambda^{-1}}^2
    \end{align*}
    where (i) is because $v_x^{k+1}\in\partial h(x^{k+1})$, (ii) is due to Cauchy inequality.
    Then it suffices to take $C=\max\left\{\frac{1}{\lambda_1}, 4c, \frac{10c^2+4(c+1)^2+1}{\lambda_2} \right\}$.
\qed
\end{proof}

Next, we prove the linear convergence of our algorithm.
\begin{theorem}
    Suppose Assumptions \ref{assumption1}-\ref{Slater's} are satisfied and assume (LEB) condition holds. If the parameters are chosen so that $P_c\succ0$, then there $\exists\kappa, R>0$ and an integer $K$, s.t.  for all $k\geq K$, it holds that
    \[\begin{aligned}
    \dist{x^k, \cX^*}\leq Re^{-\kappa(k-K)},\quad
    \dist{\partial \Phi(x^{k})-A^\T y^{k},0}\leq Re^{-\kappa(k-K)}.
    \end{aligned}\]
\end{theorem}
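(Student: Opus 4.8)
The plan is to reduce the whole statement to a single geometric-decay estimate for the per-step quantity $\delta^k$ of Lemma \ref{lemma:eq} (equivalently, for the potential $\Delta_k(z^*)$), and then read off both advertised bounds. First I would localize. Since $P_c\succ0$, Theorem \ref{thm:ogda-limit} gives $z^k\to z^*$ for a fixed $z^*=(x^*,y^*)\in\cZ^*$, so there is an index $K$ beyond which $z^k$ lies in the neighborhood where the (LEB) condition of Definition \ref{def:errorbound} holds; this is exactly why the conclusion is stated only for $k\ge K$. Next I observe that both target quantities are controlled by $\delta^{k-1}$: Lemma \ref{cor-desc} bounds the KKT residual and increment, giving $\dist{T(z^k),0}^2\lesssim\delta^{k-1}$, and then (LEB) yields $\dist{x^k,\cX^*}\le\dist{z^k,\cZ^*}\le M\dist{T(z^k),0}\lesssim\sqrt{\delta^{k-1}}$, while $\dist{\partial\Phi(x^k)-A^\T y^k,0}\lesssim\sqrt{\delta^{k-1}}$ comes directly from Lemma \ref{cor-desc}. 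Hence it suffices to prove that $\delta^k$ decays geometrically, and since $\delta^k\le\Delta_k(z^*)-\Delta_{k+1}(z^*)\le\Delta_k(z^*)$, it suffices to show $\Delta_k(z^*)$ decays geometrically.

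To get geometric decay of $\Delta_k(z^*)$ I would establish a contraction $\Delta_{k+1}(z^*)\le\theta\,\Delta_k(z^*)$ with $\theta\in(0,1)$ for $k\ge K$. The ingredients are: the monotone decrease $\Delta_k(z^*)-\Delta_{k+1}(z^*)\ge\delta^k$ from Lemma \ref{lemma:eq}; the estimate $\nrm{z^{k+1}-z^k}^2_{\Lambda^{-1}}+\dist{T(z^{k+1}),0}^2\lesssim\delta^k$ from Lemma \ref{cor-desc}; the error bound $\dist{z^{k+1},\cZ^*}^2\le M^2\dist{T(z^{k+1}),0}^2\lesssim\delta^k$; and the upper estimate $\Delta_{k+1}(z^*)\le\nrm{z^{k+1}-z^*}^2_{\Lambda^{-1}}+c\nrm{z^{k+1}-z^k}^2_{\Lambda^{-1}}$ from Corollary \ref{cor:pos-delta}. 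In this last estimate the increment term is already $\lesssim\delta^k$, so the entire contraction reduces to proving $\nrm{z^{k+1}-z^*}^2\lesssim\delta^k$: once this holds, $\Delta_{k+1}(z^*)\lesssim\delta^k\le\Delta_k(z^*)-\Delta_{k+1}(z^*)$ rearranges to $\Delta_{k+1}(z^*)\le\tfrac{C}{C+1}\Delta_k(z^*)$.

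The hard part is exactly this bridge between $\nrm{z^{k+1}-z^*}$, the distance to the single limit point entering the upper bound of $\Delta_{k+1}(z^*)$, and $\dist{z^{k+1},\cZ^*}$, the distance to the whole solution set, which is all that (LEB) controls when $\cZ^*$ is not a singleton. The crude split $\nrm{z^{k+1}-z^*}^2\le 2\nrm{z^{k+1}-z^k}^2+2\nrm{z^k-z^*}^2$ only reproduces $\Delta_{k+1}(z^*)\lesssim\delta^k+\Delta_k(z^*)$, which is not a contraction because the constants are fixed by the data and step sizes and cannot be made small. To close the gap I would exploit the localization and the summability $\sum_{j\ge K}\nrm{z^{j+1}-z^j}^2_{\Lambda^{-1}}\le\sum_{j\ge K}\delta^j\le\Delta_K(z^*)<\infty$ together with the telescoped bound $\nrm{z^{k+1}-z^*}\le\sum_{j\ge k+1}\nrm{z^{j+1}-z^j}$, valid because $z^j\to z^*$; combined with the lower bound $\Delta_k(z^*)\ge\lambda\nrm{z^k-z^*}^2_{\Lambda^{-1}}$ from the proof of Theorem \ref{thm:ogda-limit} and $\dist{z^{k+1},\cZ^*}^2\lesssim\delta^k$, this lets one absorb the contribution of the non-uniqueness of $\cZ^*$ and upgrade the tail of increments into a genuinely geometric series once the contraction has been bootstrapped. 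Controlling this reference-point mismatch is the single genuinely delicate step; everything else is algebra on results already established.

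Finally, granting $\Delta_{k+1}(z^*)\le\theta\,\Delta_k(z^*)$ for $k\ge K$, I would set $\kappa=\tfrac12\log(1/\theta)$ and conclude. Indeed $\delta^{k-1}\le\Delta_{k-1}(z^*)\le\theta^{\,k-1-K}\Delta_K(z^*)$, so by the first paragraph both $\dist{x^k,\cX^*}$ and $\dist{\partial\Phi(x^k)-A^\T y^k,0}$ are at most a constant multiple of $\sqrt{\delta^{k-1}}\le\sqrt{\Delta_K(z^*)}\,\theta^{(k-1-K)/2}=Re^{-\kappa(k-K)}$, where $R$ depends on $M$, $C$, $\tau$, $\rho$, $\lambda$ and $\Delta_K(z^*)$, hence on $\nrm{x^*-x^0}$ and $\nrm{y^*}$. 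This is the claimed last-iterate linear rate.
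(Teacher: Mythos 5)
Your outer skeleton coincides with the paper's proof: localize via Theorem \ref{thm:ogda-limit} so that (LEB) applies for $k\geq K$, bound both target quantities by a constant times $\sqrt{\delta^{k-1}}$ using Lemma \ref{cor-desc} together with (LEB), and reduce everything to geometric decay of a potential. But the step you yourself flag as ``genuinely delicate''---the contraction for the potential anchored at the \emph{fixed} limit point $z^*$---is a genuine gap, and the repair you sketch does not work. The telescoped bound $\nrm{z^{k+1}-z^*}\leq\sum_{j\geq k+1}\nrm{z^{j+1}-z^j}$ is only informative if its right-hand side is finite, yet the potential decrease gives you only $\sum_{j}\nrm{z^{j+1}-z^j}^2_{\Lambda^{-1}}\leq\Delta_K(z^*)<\infty$, i.e.\ square-summability of the increments, which does not imply their $\ell^1$-summability. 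Moreover, your plan to ``upgrade the tail of increments into a genuinely geometric series once the contraction has been bootstrapped'' is circular: the geometric tail is a consequence of precisely the contraction you are trying to establish, so nothing gets the induction started. As written, the crude estimate $\Delta_{k+1}(z^*)\lesssim\delta^k+\Delta_k(z^*)$ is all you actually have, and, as you correctly note, it is not a contraction.

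The paper closes this reference-point mismatch differently and much more simply: it never anchors the potential at a single solution, but works with $\Delta_k^*:=\inf_{z\in\cZ^*}\Delta_k(z)$. The descent inequality $\Delta_k(\tilde z)-\Delta_{k+1}(\tilde z)\geq\delta^k$ of Lemma \ref{lemma:eq} holds for \emph{every} $\tilde z\in\cZ^*$, and the upper bound of Corollary \ref{cor:pos-delta} likewise holds for every reference point, in particular for the $\Lambda^{-1}$-nearest point of $\cZ^*$ to $z^{k+1}$; hence $\Delta_{k+1}^*\leq\distw{\Lambda^{-1}}{z^{k+1},\cZ^*}^2+c\nrm{z^{k+1}-z^k}^2_{\Lambda^{-1}}\leq M_2^{-1}\delta^k$ follows directly from (LEB) combined with Lemma \ref{cor-desc}, with no bridge to a fixed $z^*$ needed. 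Consequently $\Delta_k(\tilde z)\geq\Delta_{k+1}(\tilde z)+M_2\Delta^*_{k+1}\geq(1+M_2)\Delta^*_{k+1}$ for all $\tilde z\in\cZ^*$, and taking the infimum over $\tilde z$ yields the contraction $\Delta_k^*\geq(1+M_2)\Delta_{k+1}^*$ for $k\geq K$; since also $\delta^k\leq\Delta_k(\tilde z)$ for every $\tilde z$ (by nonnegativity of $\Delta_{k+1}$), the quantity $\delta^k$ decays geometrically, and your first and last paragraphs then go through verbatim with $\Delta_k(z^*)$ replaced by $\Delta_k^*$. The moral is that (LEB) only controls the distance to the whole set $\cZ^*$, so the potential must be measured against the \emph{nearest} solution: the infimum makes the non-uniqueness of $\cZ^*$ disappear, rather than requiring it to be ``absorbed'' by a summability argument that the available estimates cannot support.
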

\begin{proof}
    Assume that the sequence $\{z^k\}_{k=0}^{+\infty}$ converges to $z^*\in\cZ^*$. Then there exists constant $M_1$, $\epsilon_1$ (that depends on the step sizes and $z^*$) so that 
    \begin{align}\label{thm:leb}
        \distw{\Lambda^{-1}}{z^{k+1}, \cZ^*}^2 \leq M_1\left(
            \rho\nrm{Ax^{k+1}-b}^2+\tau\dist{\partial \Phi(x^{k+1})-A^\T y^{k+1},0}^2
        \right),
    \end{align}
    as long as $\nrm{z^{k+1}-z^*}\leq \epsilon_1$. By Theorem \ref{thm:ogda-limit}, there exists $K$ such that for all $k\geq K$, it holds that $\nrm{z^{k}-z^*}\leq \epsilon_1$. Therefore, there exists $M_2$ such that for all $k\geq K$, $\tilde{z}\in\cZ^*$, it holds that
    \[\begin{aligned}
        \Delta_{k}(\tilde{z})-\Delta_{k+1}(\tilde{z})
        \geq \delta^{k}
        \overset{(i)}{\geq}& M_2\left(\distw{\Lambda^{-1}}{z^{k+1}, \cZ^*}^2+c\nrm{z^{k+1}-z^{k}}_{\Lambda^{-1}}^2\right)\\
        \overset{(ii)}{\geq}& M_2 \inf_{z\in\cZ^*} \Delta_{k+1}(z),
    \end{aligned}\]
    where $M_2=\frac{1}{C\max\{M_1,c\}}$, (i) is the combination of Lemma \ref{cor-desc} and \eqref{thm:leb}, and (ii) follows from Corollary \ref{cor:pos-delta}. Thus, we denote $\Delta^*_k:=\inf_{z\in\cZ^*} \Delta_{k}(z)$, and then $\Delta^*_k\geq (1+M_2)\Delta^*_{k+1}\forall k\geq K$. 
    Hence, it holds that for all $k\geq K$, $\Delta_k^*\leq (1+M_2)^{-(k-K)}\Delta_{K}^*$.
    The desired conclusion follows from Lemma \ref{cor-desc} and the fact that $\Delta_k^*\geq \delta^k$.
\qed
\end{proof}

For the problem class \eqref{scs}, we provide a more careful analysis. In this case, the optimal solution $x^*$ of \eqref{prob} is unique, and
\begin{equation}\label{def:Y*}
    \cZ^*=\{x^*\}\times \cY^*, \qquad
    \cY^*=\left\{y: A^\top y=\nabla f(x^*)\right\}.
\end{equation}
\begin{theorem}\label{thm:afsc}
    Suppose the problem has the form \eqref{scs} and Assumption \ref{assumption3} holds. 
    Let the step sizes of \eqref{generalalgo} be suitably chosen as $\tau=\frac{c_{\tau}}{L_f}, \sigma=c_{\sigma}\frac{L_f}{\nrm{A}^2}, \rho=c_{\rho}\frac{L_f}{\nrm{A}^2}$, where the constants $c_{\tau}, c_{\sigma}, c_{\rho}$ only depend on $\alpha, \beta, \mu$. Then there exists constant $c_{\kappa}$ such that for all $k\geq 0$,
    \[\begin{aligned}
    \nrm{x^k-x^*}\leq \bigO{ \exp(-c_{\kappa}\left(\kappa_f+\kappa_A^2\right) k) },
    \end{aligned}\]
    where $\bigO{\cdot}$ hides constants that depend on $x^0, y^0$ only.
\end{theorem}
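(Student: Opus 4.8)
The plan is to specialize the potential-function analysis of Lemma~\ref{lemma:eq} and Lemma~\ref{cor-desc} to problem~\eqref{scs}, using strong convexity both to sharpen the one-step descent and to supply an explicit (LEB) constant with computable dependence on $\mu_f,L_f$ and the singular values of $A$; substituting the prescribed step sizes then lets one read off the rate. First I would sharpen the descent: since $\Phi=f$ is $\mu_f$-strongly convex, each inequality in~\eqref{convex-smooth} acquires an extra $\tfrac{\mu_f}{2}\|x^{k+1}-x^*\|^2$, so repeating the proof of Lemma~\ref{lemma:eq} verbatim gives, for $\tilde z=(x^*,y)$,
\[
\Delta_k(\tilde z)-\Delta_{k+1}(\tilde z)\ \geq\ \Phi(x^{k+1})-\Phi(x^*)-\iprod{Ax^{k+1}-b}{y}+\delta^k+\mu_f'\|x^{k+1}-x^*\|^2,
\]
with $\mu_f'$ a positive multiple of $\mu_f$. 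Choosing $y=y^*$ for a KKT multiplier makes the first three terms nonnegative, so $\Delta_k(z^*)-\Delta_{k+1}(z^*)\geq \delta^k+\mu_f'\|x^{k+1}-x^*\|^2$.

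Next I would make the error bound explicit by splitting $x-x^*=p+q$ with $p\in\ker A$ and $q\in(\ker A)^\perp$. The range part satisfies $\|q\|\leq \sigma_{\min}(A)^{-1}\|Ax-b\|$, where $\sigma_{\min}(A)$ is the smallest nonzero singular value of $A$; the null part is controlled through the reduced gradient, using that $f$ restricted to $\{x:Ax=b\}$ is $\mu_f$-strongly convex and $\Pi_{\ker A}\nabla f(x^*)=0$, which gives $\mu_f\|p\|\leq\|g\|+L_f\|q\|$ with $g=\nabla f(x)-A^\T y$. Since $y-\Pi_{\cY^*}(y)\in\mathrm{Range}(A)$ by~\eqref{def:Y*}, one also has $\dist{y,\cY^*}\leq\sigma_{\min}(A)^{-1}(L_f\|x-x^*\|+\|g\|)$. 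Assembling these bounds shows $\dist{z,\cZ^*}^2$ is dominated by a computable multiple of $\|g\|^2+\|Ax-b\|^2=\dist{T(z),0}^2$, i.e. the (LEB) of Definition~\ref{def:errorbound} holds with an $M$ written entirely in terms of $\mu_f,L_f,\sigma_{\min}(A),\|A\|$.

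Finally I would convert this into contraction. Theorem~\ref{thm:ogda-limit} gives $z^k\to z^*\in\cZ^*$, so the local bound eventually applies, while Lemma~\ref{cor-desc} bounds $\|g^{k+1}\|^2$ and $\rho\|Ax^{k+1}-b\|^2$ by $\delta^k$. Feeding the explicit $M$ and the sharpened descent into the $\Delta_k^*:=\inf_{z\in\cZ^*}\Delta_k(z)$ recursion of the (LEB)-based argument above yields $\Delta_{k+1}^*\leq(1-r)\Delta_k^*$, and Corollary~\ref{cor:pos-delta} transfers this to $\|x^k-x^*\|$; plugging in $\tau=c_\tau/L_f$, $\sigma=c_\sigma L_f/\|A\|^2$, $\rho=c_\rho L_f/\|A\|^2$ turns every constant into a power of $\kappa_f$ and $\kappa_A$, and the goal is a contraction factor $r$ of order $\kappa_f+\kappa_A^2$.

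The hard part is obtaining the additive $\kappa_f+\kappa_A^2$ instead of the multiplicative $\kappa_f\kappa_A^2$ that a black-box substitution of a single scalar error-bound constant into the generic estimate would produce. The two condition numbers enter through genuinely different channels: $\kappa_f$ through the strong-convexity term $\mu_f'\|x^{k+1}-x^*\|^2$, which alone drives the $\ker A$ component of the primal error and is untouched by the penalty, whereas $\kappa_A^2$ enters through the penalty piece $\rho\|Ax^{k+1}-b\|^2\gtrsim\rho\,\sigma_{\min}(A)^2\|q^{k+1}\|^2$ inside $\delta^k$, which governs the range component and, via Lemma~\ref{cor-desc}, the dual distance. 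Keeping these separated — bounding $\distw{\Lambda^{-1}}{z^{k+1},\cZ^*}^2$ componentwise and matching each component to the portion of the descent that controls it, rather than collapsing everything into one $M$ — is what prevents the two condition numbers from multiplying, and the attendant balancing of $c_\tau,c_\sigma,c_\rho$ is the most delicate bookkeeping in the argument.
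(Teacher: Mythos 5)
Your overall architecture matches the paper's proof (potential decrease from Lemma \ref{lemma:eq} with $y=y^*$, residual bounds from Lemma \ref{cor-desc}, explicit error-bound constants through the smallest nonzero singular value $\lambda_0$ of $A$, geometric decay of $\Delta_k^*$), but there is a genuine gap at precisely the point you identify as the hard part: your dual-distance estimate produces the multiplicative $\kappa_f\kappa_A^2$, not the additive $\kappa_f+\kappa_A^2$. Concretely, your bound $\dist{y,\cY^*}\leq\lambda_0^{-1}\left(L_f\nrm{x-x^*}+\nrm{g}\right)$ controls the gradient difference $\nrm{\nabla f(x^{k+1})-\nabla f(x^*)}$ only through Lipschitz continuity. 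The only descent term you retain that controls the primal error is the strong-convexity term of order $\mu_f\nrm{x^{k+1}-x^*}^2$, so $L_f^2\nrm{x^{k+1}-x^*}^2\lesssim (L_f^2/\mu_f)\,\delta_0^k = L_f\kappa_f\,\delta_0^k$, where $\delta_0^k:=f(x^{k+1})-f(x^*)-\iprod{Ax^{k+1}-b}{y^*}$; after dividing by $\sigma\lambda_0^2$ with $\sigma=c_\sigma L_f/\nrm{A}^2$, this contributes a term of order $\kappa_f\kappa_A^2$ to $M$. Your null/range split of $x-x^*$ does not rescue this, because the obstruction lives in the vector $\nabla f(x^{k+1})-\nabla f(x^*)$, which is not aligned with $\ker A$ or its complement in any exploitable way; routing the null component through $\mu_f\nrm{p}\leq\nrm{g}+L_f\nrm{q}$ actually makes the exponent worse (of order $\kappa_f^2\kappa_A^2$). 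So the ``componentwise matching'' you describe names the goal but does not supply the mechanism.

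The missing idea --- and what the paper actually does --- is to lower-bound the single gap quantity $\delta_0^k$ in \emph{two} ways simultaneously. By the KKT identities $Ax^*=b$ and $\nabla f(x^*)=A^\T y^*$, $\delta_0^k$ equals the Bregman-type gap $f(x^{k+1})-f(x^*)-\iprod{\nabla f(x^*)}{x^{k+1}-x^*}$, so strong convexity gives $\delta_0^k\gtrsim\mu_f\nrm{x^{k+1}-x^*}^2$ \emph{and} cocoercivity of the $L_f$-smooth convex $f$ gives $\delta_0^k\geq\frac{1}{2L_f}\nrm{\nabla f(x^{k+1})-\nabla f(x^*)}^2$. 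The second bound lets the dual estimate read
\begin{equation*}
\lambda_0\,\dist{y^{k+1},\cY^*}\leq \sqrt{\frac{C\delta^k}{\tau}}+\sqrt{2L_f\delta_0^k},
\end{equation*}
so after dividing by $\sigma\lambda_0^2$ the gradient term costs only $L_f/(\sigma\lambda_0^2)\sim\kappa_A^2$, and the factor $\kappa_f$ never enters the dual channel; this is how the paper obtains $M=\bigO{\kappa_f+\kappa_A^2}$. With this replacement your argument closes; note also that no sharpening of \eqref{convex-smooth} or of Lemma \ref{lemma:eq} is needed, since $\delta_0^k$ is already present in \eqref{onestep} with $y=y^*$. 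A secondary mismatch: because the estimates in this specialization are global, the paper argues directly for all $k\geq0$, whereas your detour through Theorem \ref{thm:ogda-limit} and the local (LEB) machinery would only yield the contraction for $k\geq K$, which is weaker than the claimed ``for all $k\geq0$''.
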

\begin{proof}
    We can simply choose $c_{\tau}, c_{\sigma}, c_{\rho}$ such that the constant $C$ defined in Lemma \ref{cor-desc} are of constant order. For any fixed $z^*=(x^*,y^*)\in\cZ^*$, it holds that
    \[\begin{aligned}
    & f(x)-f(x^*)-\iprod{Ax-b}{y^*}\\
    \stackrel{(i)}{=}& f(x)-f(x^*)-\iprod{\nabla f(x^*)}{x-x^*}\\
    \stackrel{(ii)}{\geq}& \max\left\{\mu_{f}\nrm{x-x^*}^2,\frac{1}{2L_{f}}\nrm{\nabla f(x)-\nabla f(x^*)}^2\right\},
    \end{aligned}\]
    where (i) utilizes the KKT conditions $Ax^*=b$ and $\nabla f(x^*)=A^\T y^*$, (ii) is due to the assumption that $f$ is $L_f$-smooth and $\mu_f$-strongly convex.
    For simplicity, we denote $\delta_0^k:=f(x^{k+1})-f(x^*)-\iprod{Ax^{k+1}-b}{y^*}$, then
    \begin{align*}
        &\nrm{x^{k+1}-x^*}\leq \sqrt{\frac{\delta_0^k}{\mu_{f}}},\qquad
        \nrm{\nabla f(x^{k+1})-\nabla f(x^*)}\leq \sqrt{2L_{f}\delta_0^k}.
    \end{align*}
    By Lemma \ref{cor-desc} we have $\nrm{\nabla f(x^{k+1})-A^\top y^{k+1}}\leq \sqrt{\frac{C\delta^k}{\tau}}$. Recall the definition of $\cY^*$ in \eqref{def:Y*} and $\lambda_0\!>\!0$ is the minimum nonzero singular value of $A$, we obtain
    \begin{align*}
    \lambda_0 \dist{y^{k+1}, \cY^*}
    &\leq \nrm{A^\top y^{k+1}-\nabla f(x^*)}\\
    &\leq \nrm{A^\top y^{k+1}-\nabla f(x^{k+1})}+\nrm{\nabla f(x^{k+1})-\nabla f(x^*)}\\
    &\leq \sqrt{\frac{C\delta^k}{\tau}}+\sqrt{2L_{f}\delta_0^k}.
    \end{align*}
    Hence
    \begin{align*}
        \distw{\Lambda^{-1}}{z^{k+1}, \cZ^*}^2\leq 
        \frac{\delta_0^{k}}{\tau\mu_{f}}
        +\frac{3}{\lambda_0^2\sigma}\left(\frac{C\delta^k}{\tau}+L_{f}\delta_0^k\right).
    \end{align*}
    On the other hand, we have $\nrm{z^{k+1}-z^{k}}_{\Lambda^{-1}}^2\leq C\delta^k$ by Lemma \ref{cor-desc}. Thus, for
    \[\begin{aligned}
        M=\maxop{\frac{3C}{\lambda_0^2\sigma\tau}+cC, \frac{1}{\tau\mu_{f}}+\frac{3L_f}{\lambda_0^2\sigma}}
    \end{aligned}\]
    and any $\tilde{z}\in\cZ^*$, it holds that
    \begin{align*}
        \Delta_{k}(\tilde{z})-\Delta_{k+1}(\tilde{z})
        \geq& \delta_0^k+\delta^{k}\\
        {\geq}& \frac 1M \left(\distw{\Lambda^{-1}}{z^{k+1}, \cZ^*}^2+c\nrm{z^{k+1}-z^{k}}_{\Lambda^{-1}}^2\right)\\
        {\geq}& \frac 1M \inf_{z\in\cZ^*} \Delta_{k+1}(z).
    \end{align*}
    Then we have $\Delta^*_{k+1}\leq \frac{M}{M+1}\Delta^*_{k},$
    which yields a convergence rate of $\left(1-\frac{1}{M+1}\right)^k$. Here $M=\bigO{\kappa_{f}+\kappa_A^2}$ clearly.
\qed
\end{proof}
For example, we can take $\rho=\sigma=\frac{L_f}{4\nrm{A}^2}$, $\tau=\frac{1}{8L_f}$ for SOGDA, and take $\rho=\sigma=\frac{L_f}{2\nrm{A}^2}$, $\tau=\frac{1}{2L_f}$ for LALM.

\section{A byproduct: proximal OGDA for nonsmooth saddle point problem}  Note that, our analysis framework can be extended beyond bilinear problem or augmented Lagrangian function. As a byproduct of our analysis, we derive the convergence result of the proximal OGDA algorithm, which is a generalization of well-known OGDA algorithm for smooth problem \cite{wei2020linearOGDA,mokhtari2020unifiedOGDA}.
 Consider the problem
\begin{equation}\label{gen-saddle}
    \min_{x}\max_{y}\ \cL(x,y):=h(x)+\Psi(x,y)-s(y).
\end{equation}
We propose the proximal OGDA algorithm as 
\begin{align}\label{OGDA}
    \begin{split}
        x^{k+1} &= \prox_{\tau h}\left[x^k-\tau \left(2 g_x^k-g_x^{k-1}\right)\right],\\
        y^{k+1} &= \prox_{\sigma s}\left[y^k+\sigma\left(2 g_y^{k}-g_y^{k-1}\right)\right].
    \end{split}
\end{align}
Especially, when $h(\cdot)=\mathbb{I}_{\cX}(\cdot)$ and $s(\cdot)=\mathbb{I}_{\cY}(\cdot)$, we recover the OGDA on smooth minimax problem with closed convex domain.
The convergence analysis of the proximal OGDA \eqref{OGDA} is almost a direct implication of our analysis for \eqref{generalalgo}, but much simpler. Consider the potential function
\begin{align*}
    \Delta_k(z) =& \frac1{2}\|z^k-z\|^2_{\Lambda^{-1}}+\frac{1}{4}\|z^{k}-z^{k-1}\|^2_{\Lambda^{-1}}+\langle F(z^k)-F(z^{k-1}), z-z^{k}\rangle.
\end{align*}

We assume that the gradient $\nabla_x\Psi(x,y)$ is $L_{xx}$-Lipschitz continuous with respect to $x$ and $L_{xy}$-Lipschitz continuous with respect to $y$,
the gradient $\nabla_y\Psi(x,y)$ is $L_{yx}$-Lipschitz continuous with respect to $x$ and $L_{yy}$-Lipschitz continuous with respect to $y$, i.e.,
\begin{align*} 
    \nrm{\nabla_x\Psi(x,y)-\nabla_x\Psi(x^\prime,y)}&\leq L_{xx}\nrm{x-x^\prime},\ \forall x,x^\prime\in\mathcal{X}, \forall y\in\mathcal{Y},\\
    \nrm{\nabla_x\Psi(x,y)-\nabla_x\Psi(x,y^\prime)}&\leq L_{xy}\nrm{y-y^\prime},\ \forall x\in\mathcal{X}, \forall y,y^\prime\in\mathcal{Y},\\
    \nrm{\nabla_y\Psi(x,y)-\nabla_y\Psi(x^\prime,y)}&\leq L_{yx}\nrm{x-x^\prime},\ \forall x,x^\prime\in\mathcal{X}, \forall y\in\mathcal{Y},\\
    \nrm{\nabla_y\Psi(x,y)-\nabla_y\Psi(x,y^\prime)}&\leq L_{yy}\nrm{y-y^\prime},\ \forall x\in\mathcal{X}, \forall y,y^\prime\in\mathcal{Y}.
\end{align*}

\begin{theorem}
    The sequence $\{z^n\}_{n=0}^{+\infty}$ is generated by \eqref{OGDA} with the step sizes satisfying $\tau\leq \frac{1}{2L_{xx}}, \sigma\leq \frac{1}{2L_{yy}}$ and $\big(\frac{1}{\tau}-2L_{xx}\big)\big(\frac{1}{\sigma}-2L_{yy}\big)> 4\max\{L_{xy},L_{yx}\}^2$. 
    Then the sequence $\{z^n\}_{n=0}^{+\infty}$ converges to a saddle point of problem \eqref{gen-saddle}. Furthermore, let  $(\bar x_N, \bar y_N)  = \left(\frac{1}{N}\sum_{k=1}^{N}x^k, \frac{1}{N}\sum_{k=1}^{N}y^k\right)$, then for any $R_x, R_y>0$, it holds that
    \begin{align*}
        \max_{y\in\cY\cap \BB(y_0, R_y)} \cL(\ox_N, y)-\min_{x\in\cX\cap \BB(x_0, R_x)}\cL(x, \oy_N)\leq \frac{1}{N}\left(\frac{R_x^2}{\tau}+\frac{R_y^2}{\sigma}\right).
    \end{align*}
\end{theorem}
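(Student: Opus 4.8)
The scheme \eqref{OGDA} is exactly the instance $\alpha=\beta=\mu=1$ of the unified framework \eqref{generalalgo} applied to a general (not necessarily bilinear) convex--concave $\Psi$. I would therefore reuse the one-step machinery of Lemma~\ref{lemma:one-step}, which simplifies dramatically here: with these parameters one has $\Theta=\Xi=I$, so $\Xi\Theta=I$, $\Theta-\Xi=0$, and $\mu-\beta=0$, whence the last term of $\Delta_k$ disappears and the stated potential $\Delta_k(z)=\frac12\nrm{z^k-z}_{\Lambda^{-1}}^2+\frac14\nrm{z^k-z^{k-1}}_{\Lambda^{-1}}^2+\iprod{F(z^k)-F(z^{k-1})}{z-z^k}$ is precisely \eqref{def:Delta} with $c=\tfrac12$. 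Substituting $\Xi\Theta=I,\Theta-\Xi=0$ into Lemma~\ref{lemma:one-step} and telescoping against the potential yields, for every reference point $z=(x,y)$,
\[
\Delta_k(z)-\Delta_{k+1}(z)\ \geq\ R(z^{k+1})-R(z)+\iprod{F(z^{k+1})}{z^{k+1}-z}+E_k,
\]
where $E_k:=\tfrac14\nrm{z^{k+1}-z^k}_{\Lambda^{-1}}^2+\tfrac14\nrm{z^k-z^{k-1}}_{\Lambda^{-1}}^2-\iprod{F(z^k)-F(z^{k-1})}{z^k-z^{k+1}}$. Using convexity of $h(\cdot)+\Psi(\cdot,y^{k+1})$ in $x$ and concavity of $\Psi(x^{k+1},\cdot)-s(\cdot)$ in $y$, the first three terms on the right lower-bound the saddle residual $\cL(x^{k+1},y)-\cL(x,y^{k+1})$, exactly as in the derivation preceding Theorem~\ref{thm:affine}.

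\textbf{The crux: nonnegativity of $E_k$ under the step-size conditions.} This is where the stated step-size conditions enter, and is the main obstacle. Writing $a=\nrm{x^k-x^{k-1}}$, $b=\nrm{y^k-y^{k-1}}$, $p=\nrm{x^{k+1}-x^k}$, $q=\nrm{y^{k+1}-y^k}$ and invoking the four Lipschitz bounds on $\nabla\Psi$, I would estimate $\iprod{F(z^k)-F(z^{k-1})}{z^k-z^{k+1}}\leq (L_{xx}a+L_{xy}b)p+(L_{yx}a+L_{yy}b)q$, so that $E_k\geq \frac{a^2+p^2}{4\tau}+\frac{b^2+q^2}{4\sigma}-(L_{xx}a+L_{xy}b)p-(L_{yx}a+L_{yy}b)q$. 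Absorbing the diagonal terms via $L_{xx}ap\leq\frac{L_{xx}}{2}(a^2+p^2)$ and $L_{yy}bq\leq\frac{L_{yy}}{2}(b^2+q^2)$, and setting $\alpha_x=\frac14(\frac1\tau-2L_{xx})$, $\alpha_y=\frac14(\frac1\sigma-2L_{yy})$, $s=\max\{L_{xy},L_{yx}\}$, the remainder splits into two identical planar forms $[\alpha_x a^2+\alpha_y q^2-s\,aq]+[\alpha_x p^2+\alpha_y b^2-s\,bp]$. Each is nonnegative iff $4\alpha_x\alpha_y\geq s^2$, i.e.\ iff $(\frac1\tau-2L_{xx})(\frac1\sigma-2L_{yy})\geq 4\max\{L_{xy},L_{yx}\}^2$ together with $\tau\leq\frac1{2L_{xx}},\sigma\leq\frac1{2L_{yy}}$ --- precisely the hypotheses. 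The strict inequality moreover yields $E_k\geq\theta\big(\nrm{z^{k+1}-z^k}_{\Lambda^{-1}}^2+\nrm{z^k-z^{k-1}}_{\Lambda^{-1}}^2\big)$ for some $\theta>0$. The very same planar estimate, applied to $\iprod{F(z^k)-F(z^{k-1})}{z-z^k}$, shows $\Delta_k(z)\geq\frac14\nrm{z^k-z}_{\Lambda^{-1}}^2\geq0$, the analogue of Corollary~\ref{cor:pos-delta}.

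\textbf{Ergodic gap bound.} Since $E_k\geq0$, summing the per-step inequality for $k=0,\dots,N-1$ and dropping $-\Delta_N(z)\leq0$ gives $\sum_{k=0}^{N-1}\big(\cL(x^{k+1},y)-\cL(x,y^{k+1})\big)\leq\Delta_0(z)$. The initialization $z^{-1}=z^0$ kills the difference and quadratic remainder terms in $\Delta_0$, leaving $\Delta_0(z)=\frac1{2\tau}\nrm{x^0-x}^2+\frac1{2\sigma}\nrm{y^0-y}^2$. Dividing by $N$, applying Jensen to $(\ox_N,\oy_N)$ through convexity--concavity of $\cL$, and finally choosing $x\in\cX\cap\BB(x_0,R_x)$, $y\in\cY\cap\BB(y_0,R_y)$ to be the minimizer and maximizer defining the restricted gap, I obtain $\max_{y\in\cY\cap\BB(y_0,R_y)}\cL(\ox_N,y)-\min_{x\in\cX\cap\BB(x_0,R_x)}\cL(x,\oy_N)\leq\frac1N\big(\frac{R_x^2}{\tau}+\frac{R_y^2}{\sigma}\big)$, the claimed bound (in fact with a slack factor of two to spare).

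\textbf{Last-iterate convergence.} Taking $z=z^*$ a saddle point, the residual $\cL(x^{k+1},y^*)-\cL(x^*,y^{k+1})\geq0$, so the per-step inequality reduces to $\Delta_k(z^*)-\Delta_{k+1}(z^*)\geq E_k\geq0$; hence $\{\Delta_k(z^*)\}$ is nonincreasing and bounded below, the iterates are bounded, and $\sum_kE_k<\infty$ forces $\nrm{z^{k+1}-z^k}\to0$. Passing to a convergent subsequence $z^{k_n}\to\bar z$ and using $\nrm{z^{k+1}-z^k}\to0$ with continuity of $\prox$ and $\nabla\Psi$ in \eqref{OGDA} identifies $\bar z$ as a fixed point, i.e.\ a saddle point. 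Choosing $z^*=\bar z$ and using the upper bound $\Delta_k(z)\leq\frac34\nrm{z^k-z}_{\Lambda^{-1}}^2+\frac12\nrm{z^k-z^{k-1}}_{\Lambda^{-1}}^2$ gives $\Delta_{k_n}(\bar z)\to0$, and monotonicity propagates this to the whole sequence, yielding $z^k\to\bar z$ exactly as in Theorem~\ref{thm:ogda-limit}. I expect every step except the quadratic-form verification in the second paragraph to be routine; that verification is the heart of the argument and the place where the precise step-size thresholds are pinned down.
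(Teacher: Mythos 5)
Your proposal is correct and follows essentially the same route as the paper's proof: the same potential $\Delta_k$ (the $c=\tfrac12$ specialization of \eqref{def:Delta}), the same per-step inequality from Lemma~\ref{lemma:one-step}, the same four-term Lipschitz estimate giving $\delta^k\geq 0$ under the stated step-size product condition, telescoping with $\Delta_0(z)=\tfrac{1}{2\tau}\nrm{x^0-x}^2+\tfrac{1}{2\sigma}\nrm{y^0-y}^2$, and the identical subsequence-plus-monotonicity argument for last-iterate convergence. Your explicit splitting of the quadratic form into the two planar pieces $\alpha_x a^2+\alpha_y q^2-s\,aq$ and $\alpha_x p^2+\alpha_y b^2-s\,bp$ merely fills in a verification the paper states tersely, and your observation that the ergodic bound actually holds with an extra factor of $\tfrac12$ is consistent with the paper's (looser) stated constant.
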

\begin{proof}
    By Lemma \ref{lemma:one-step}, it holds that for any $z\in\cZ=\cX\times\cY$ that 
    \begin{eqnarray*}
    	&&\Delta_k(z)-\Delta_{k+1}(z)\geq R(z^{k+1})-R(z)+\left\langle F(z^{k+1}), z^{k+1}-z\right\rangle\\
    	&&\qquad\qquad +\frac{1}{4}\nrm{z^{k+1}\!-\!z^{k}}^2_{\Lambda^{-1}}\!+\!\frac{1}{4}\nrm{z^{k}\!-\!z^{k-1}}^2_{\Lambda^{-1}}\!-\!\iprod{  F(z^{k})\!-\!F(z^{k-1}) }{ z^k\!-\!z^{k+1} }.
    \end{eqnarray*} 
    By definition, we have
    \[\begin{aligned}
        &\iprod{  F(z^{k})-F(z^{k-1}) }{ z^k-z^{k+1} }\\
        \leq &\nrm{x^{k}-x^{k+1}}\nrm{\nabla_x \Psi(z^{k})-\nabla_x \Psi(z^{k-1})}+ \nrm{y^{k}-y^{k+1}}\nrm{\nabla_y \Psi(z^{k})-\nabla_y \Psi(z^{k-1})}\\
        \leq &L_{xx}\nrm{x^{k}-x^{k+1}}\nrm{x^{k}-x^{k-1}}+L_{yy}\nrm{y^{k}-y^{k+1}}\nrm{y^{k}-y^{k-1}}\\
        &+L_{xy}\nrm{x^{k}-x^{k+1}}\nrm{y^{k}-y^{k-1}}+L_{yx}\nrm{y^{k}-y^{k+1}}\nrm{x^{k}-x^{k-1}}.
    \end{aligned}\]
    Then by $\left(\frac{1}{2\tau}-L_{xx}\right)\left(\frac{1}{2\sigma}-L_{yy}\right)\geq \max\{L_{xy},L_{yx}\}^2$, it follows that
    \[\begin{aligned}
    \delta^k:=\frac{1}{4}\nrm{z^{k+1}-z^{k}}^2_{\Lambda^{-1}}+\frac{1}{4}\nrm{z^{k}-z^{k-1}}^2_{\Lambda^{-1}}-\iprod{  F(z^{k})-F(z^{k-1}) }{ z^k-z^{k+1} }\geq 0.
    \end{aligned}\]
    Similarly, we have $\Delta_k(z)\geq \frac{1}{4}\nrm{z^k-z}_{\Lambda^{-1}}^2$. Hence, for any $z\in\cZ=\cX\times\cY$,
    \[\begin{aligned}
        R(z^{k+1})-R(z)+\left\langle F(z^{k+1}), z^{k+1}-z\right\rangle
        \leq \Delta_k(z)-\Delta_{k+1}(z).
    \end{aligned}\]
    The desired inequality follows directly by taking average.

    Furthermore, when $\left(\frac{1}{2\tau}-L_{xx}\right)\left(\frac{1}{2\sigma}-L_{yy}\right)> \max\{L_{xy},L_{yx}\}^2$, there actually exists a $\gamma>0$ such that for all $k\geq0$, $\delta^k\geq \gamma\nrm{z^{k+1}-z^{k}}^2_{\Lambda^{-1}}$, and hence for any saddle point $z^*$,
    \[\begin{aligned}
        \Delta_k(z^*)-\Delta_{k+1}(z^*)
        &\geq \delta_0^k+R(z^{k+1})-R(z^*)+\left\langle F(z^{k+1}), z^{k+1}-z^*\right\rangle\\
        &\geq \gamma \nrm{z^{k+1}-z^{k}}^2_{\Lambda^{-1}}.
    \end{aligned}\]
    Then we see $\left(\Delta_k(z^*)\right)$ is monotonically decreasing, and $\lim_k \nrm{z^{k+1}-z^k}=0$. By the fact $\Delta_k(z^*)\geq \frac{1}{4}\nrm{z^k-z^*}_{\Lambda^{-1}}^2$, $(z^n)$ has a subsequence $(z^{k_n})$ which converges to a $z^{\star}\in\cZ$. Then in 
    \begin{align*}
        z^{k+1}=\prox_{\Lambda R}\left(z^{k}-2\Lambda F(z^{k})+\Lambda F(z^{k-1})\right),
    \end{align*}
    we can plug in $k=k_n$ and let $n\to\infty$, by continuity it holds $z^{\star}=\prox_{\Lambda R}\left(z^{\star}-\Lambda F(z^{\star})\right)$. Therefore, $z^{\star}\in\cZ^*$ and hence the above argument shows that $\left(\Delta_k(z^{\star})\right)$ is also monotonically decreasing. Thus it must tend to 0, and $\lim_k z^k=z^{\star}$.
\qed
\end{proof}

\section{Numerical experiments}\label{sec:experiment}
In this section, we demonstrate the effectiveness of our proposed algorithm on several test problems.
\subsection{Applications to linear programming}
Consider the following linear programming problem:
\[
    \min_x\  r^\T x, \quad \st\ Ax\leq b,\ Cx=d,\ l\leq x\leq u,
\]
where $A\in\mathbbm{R}^{m_1\times n}, C\in\mathbbm{R}^{m_2\times n}$ are given matrices, $r,l,u\in\mathbbm{R}^{n}, b\in\mathbbm{R}^{m_1}, d\in\mathbbm{R}^{m_2}$ are given vectors. 
When $\rho_1=\rho_2=0$, the Lagrangian function is
\[
    \mathcal{L}(x,y,z) = r^\T x -y^\T(Cx-d)-z^\T(Ax-b).
\]
Let $f(x)=\mathbbm{1}_{[l,u]}(x), h(x)=0, \Psi(x,y,z) = \mathcal{L}(x,y,z)$ and $s(y,z)=\mathbbm{1}_{[-\infty,0]}(z)$ in \eqref{generalalgo}.
When $\rho_1, \rho_2>0$, the augmented Lagrangian function is 
\begin{equation*}
    \mathcal{L}_\rho(x,y,z) = r^\T x -y^\T(Cx-d)+\rho_1\|Cx-d\|^2+\frac{\rho_2}{2}\left\|\left[Ax-b-\frac{z}{\rho_2}\right]_+\right\|^2-\frac{\|z\|^2}{2\rho_2}.
\end{equation*}
Let $f(x)=\mathbbm{1}_{[l,u]}(x), h(x)=0, \Psi(x,y,z) = \mathcal{L}_\rho(x,y,z)$ and ${s(y,z)=0}$,
then the primal-dual methods \ref{generalalgo} can be easily implemented according to Lemma \ref{s0cone}.
In contrast, the subproblem of ADMM has no explicit solution. 
Thus, the inner iteration is needed to solve the subproblem approximately, which is computationally expensive.

Since the optimal solution is not necessarily unique, we utilize the objective function gap and the primal infeasibility to measure the optimality of the iterates:
\[
    \text{ObjGap}=\frac{\abs{r^\T x-r^\T x^*}}{\abs{r^\T x^*}},\qquad
    \text{Pinf}=\frac{\|[Ax-b]_+\|_2}{\|b\|_2}+\frac{\|Cx-d\|_2}{\|d\|_2}.
\]
We test several primal-dual methods on the two simple instances from netlib dataset.
Before applying the algorithms, we presolve the instances using gurobi \cite{gurobi} to simplify the problems.
The tested algorithms include the Chambolle-Pock method and the OGDA method based on the Lagrangian and augmented Lagrangian function, as well as SOGDA based on the augmented Lagrangian function.
We do not include the result of SOGDA based on the Lagrangian function since it has no convergence guarantee and the experiments confirm the point.
The results of ADMM are also not presented here because of the need to solve the subproblems.

For fairness of comparison, we choose $\tau, \sigma$ from $\{1e-5,1e-4,1e-3,1e-2,1e-1,1,1e1,1e2,1e3\}$.
For the algorithms based on the augmented Lagrangian function, $\rho_1=\rho_2=\hat{\rho}$ are also chosen from the set.
The best results under the parameter range are shown in Figure \ref{fig:lp}.

We can find that our proposed SOGDA-AL is efficient and competitive when compared to other methods.
Moreover, algorithms with an augmented Lagrangian function converge faster than those based on the Lagrangian function, especially for OGDA-AL in qap8 and CP-AL in both the two instances.

\begin{figure*}[!htb]
    \centering
        \subfigure[qap8]{\includegraphics[scale = 0.42]{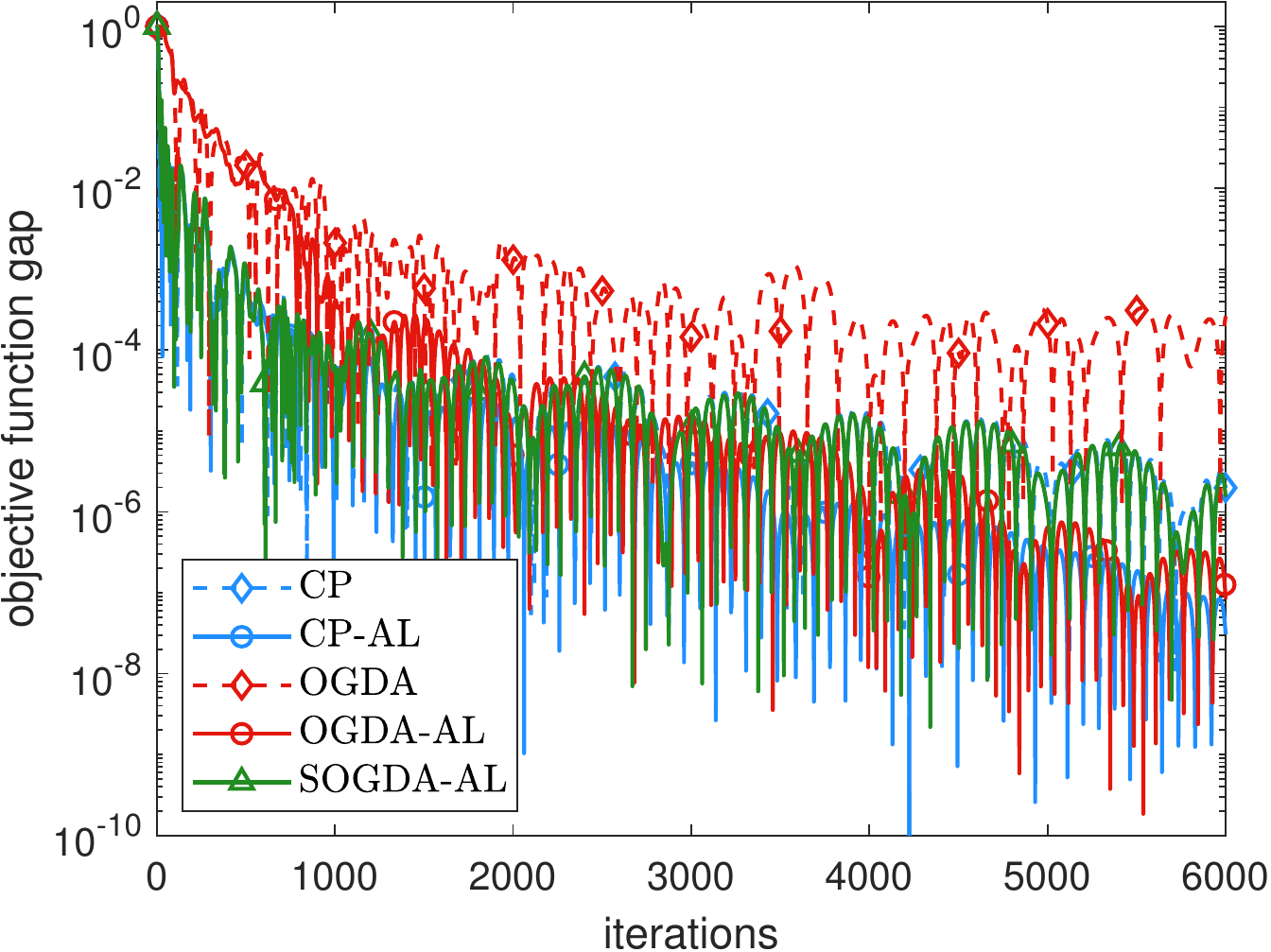}}
        \subfigure[qap8]{\includegraphics[scale = 0.42]{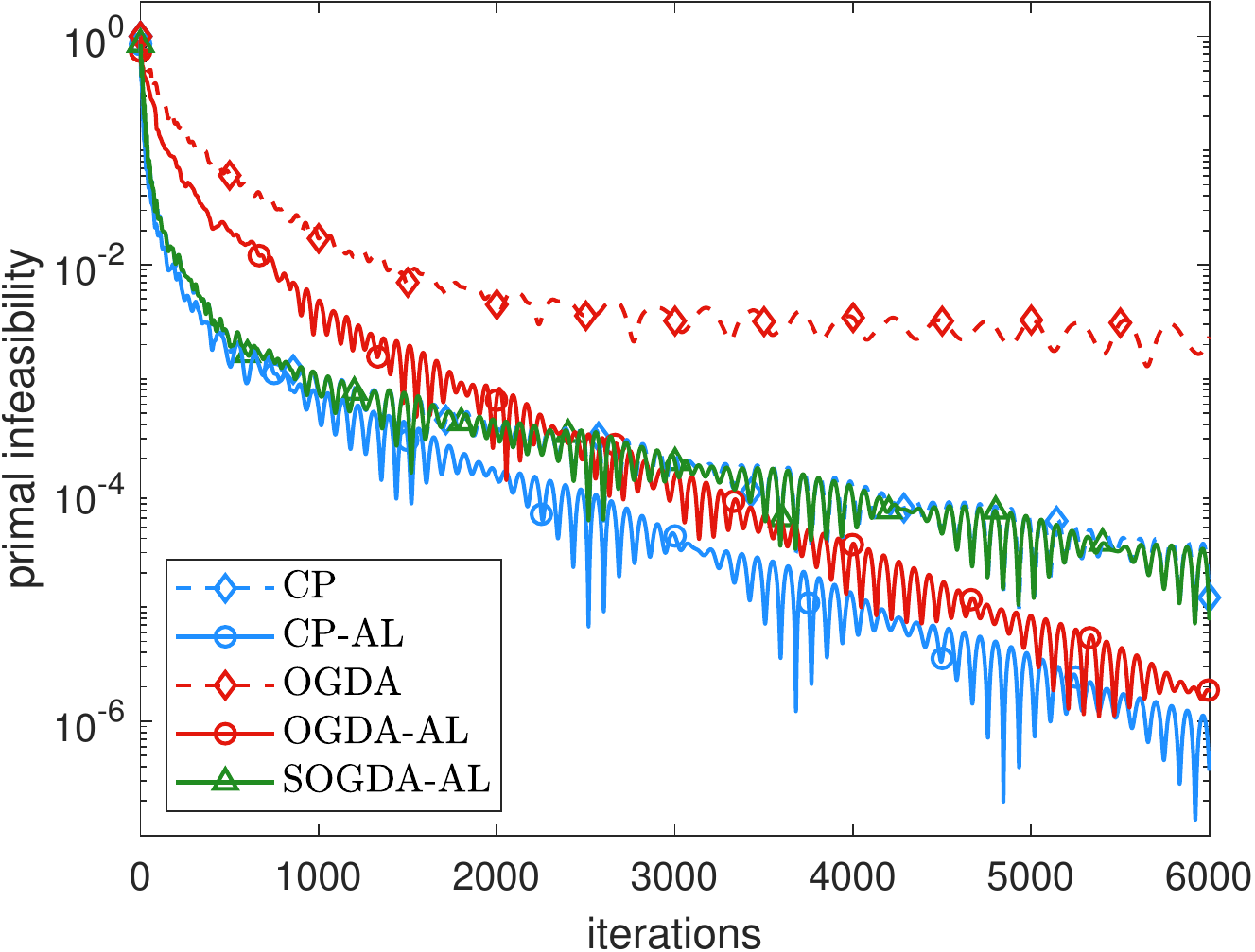}}\\
        \subfigure[sc50a]{\includegraphics[scale = 0.42]{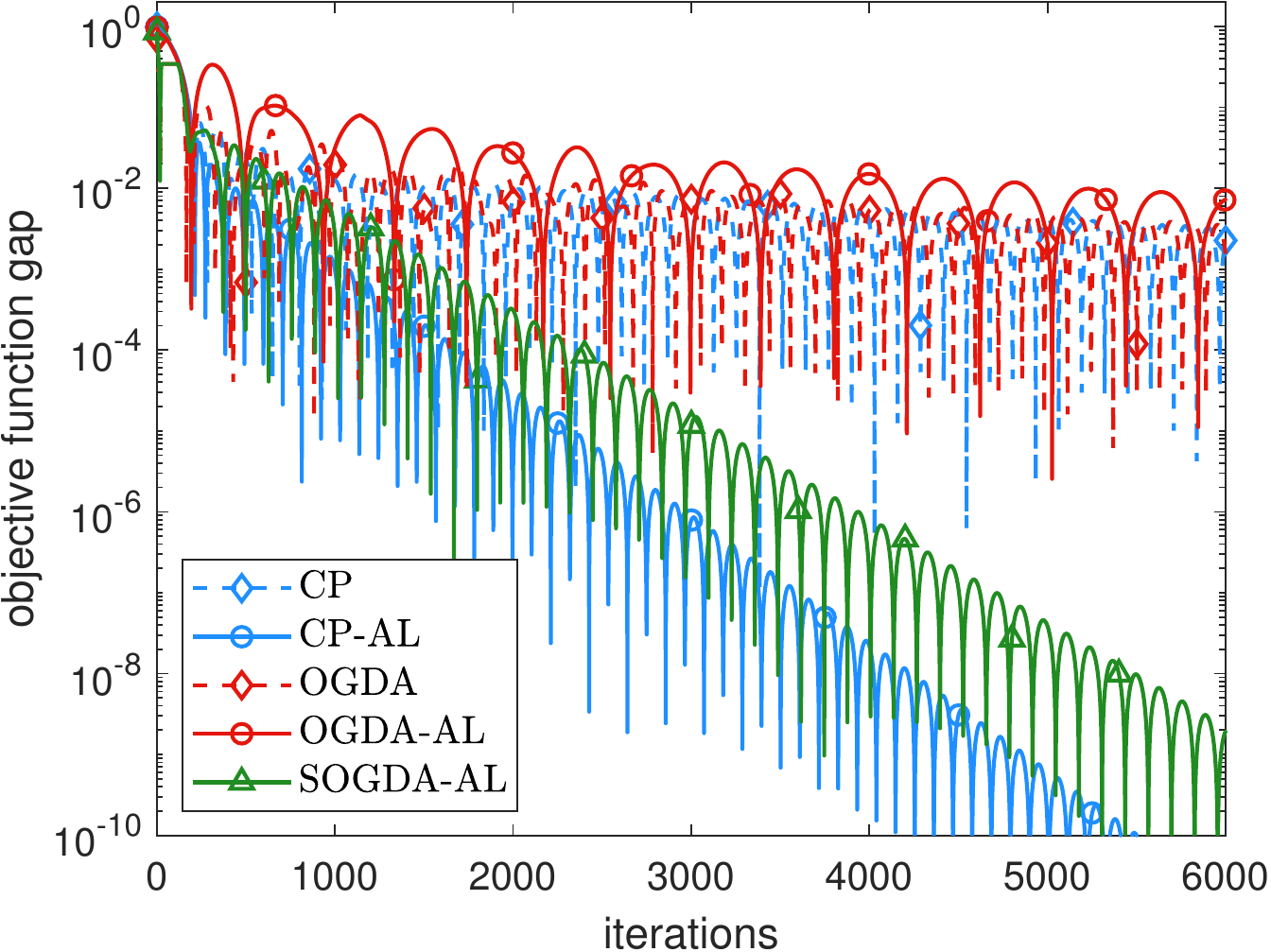}}
        \subfigure[sc50a]{\includegraphics[scale = 0.42]{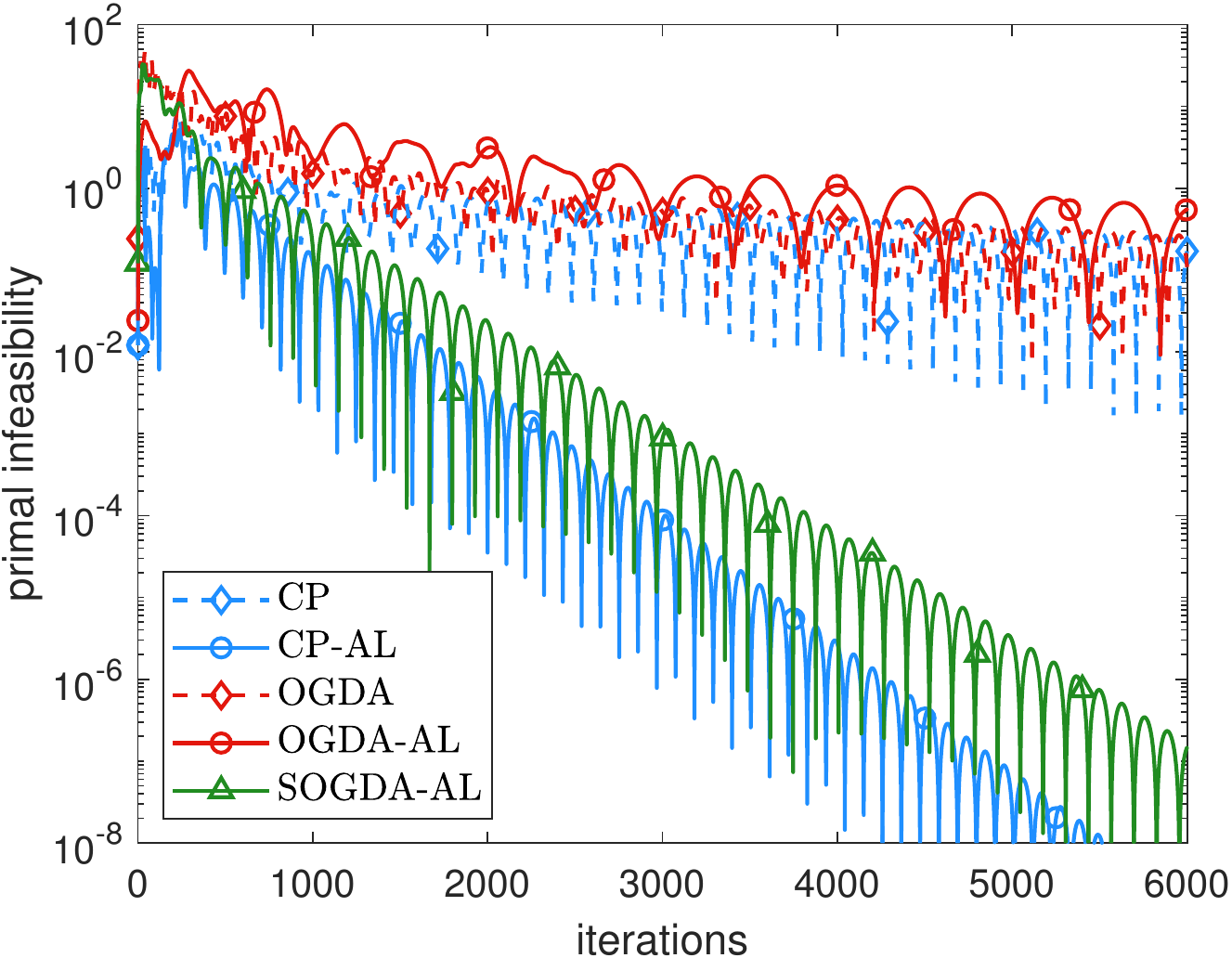}}
    \caption{Linear programming.}
        \label{fig:lp}
\end{figure*}

\subsection{Applications to $\ell_1$ minimization}
\subsubsection{Basis pursuit}\label{bpsec}
We consider the basis pursuit problem, 
\begin{equation}\label{bp}
        \min_x\ \|x\|_1,\quad \st\ Ax=b,
\end{equation}
where $A\in\mathbb{R}^{m\times n}$ is of full row rank and $b\in\mathbb{R}^m$.
The augmented Lagrangian function can be written as 
\[
    \mathcal{L}_\rho(x,y) = \|x\|_1 - y^\T(Ax-b) + \frac\rho2\|Ax-b\|_2^2.
\]
Let $f(x)=0$, $h(x)=\|x\|_1$ and $\Psi(x,y)=-y^\T(Ax-b)+\frac\rho2\|Ax-b\|^2$ and $s(y)=0$ in \eqref{generalalgo},
we can easily get the primal-dual methods for solving \eqref{bp}.
We define the following metrics to describe the gap between the iterate and the optimum:
\[
    \text{RelErr} = \frac{\|x-x^*\|_2}{\max(\|x^*\|_2,1)},\qquad
    \text{Pinf} = \frac{\|Ax-b\|_2}{\|b\|_2}.
\]

\subsubsection{L1L1}
Consider the following problem
\begin{equation}\label{l1l1}
        \min_x\ \zeta\|x\|_1+\|Ax-b\|_1,
\end{equation}
where the settings of $A$ and $b$ are the same as the basis pursuit problem and $\zeta>0$ is a constant.
It is different from LASSO in that the square of $\ell_2$ norm is replaced by $\ell_1$ norm.
We call the problem \eqref{l1l1} the L1L1 problem here.
In order to write the problem in the form of an affinely constrained problem,
we introduce $r := b-Ax$ and \eqref{l1l1} becomes
\[
    \min_{x,r}\ \zeta\|x\|_1+\|r\|_1,\quad \st\ Ax-b+r = 0.
\]
Then the augmented Lagrangian function of the problem is
\[
    \mathcal{L}_\rho(x,r,y) = \zeta\|x\|_1 + \|r\|_1 - y^\T(Ax-b+r) + \frac\rho2\|Ax-b+r \|_2^2.
\]
substitute $f(x,r)=0$, $h(x,r)=\zeta\|x\|_1+\|r\|_1$, $\Psi(x,r,y)=- y^\T(Ax-b+r) + \frac\rho2\|Ax-b+r \|_2^2$ and $s(y)=0$ into \eqref{generalalgo} to get primal-dual algorithms for solving \eqref{l1l1}.
We define the following metrics to measure the gap between the iterate and the optimum of the problem:
\begin{align*}
    \text{RelErr} = \frac{\|x-x^*\|_2}{\max(\|x^*\|_2,1)},\qquad
    \text{Pinf} = \frac{\|Ax-b+r\|_2}{\|b\|_2}.
\end{align*}

\subsubsection{Numerical comparison}
Our test problems\cite{milzarek2014semismooth} are constructed as follows.
Firstly, we create a sparse solution $x^* \in \mathbb{R}^{n}$ with $k$ nonzero entries, where $n = 512^2 = 262144$ and $k = [n/40] = 5553$. 
The $k$ different indices are sampled uniformly from $\{1,2,\cdots,n\}$.
The magnitude of each nonzero element is determined by $x^*_{i}=\eta_{1}(i) 10^{d \eta_{2}(i) / 20}$, 
where $d$ is a dynamic range, $\eta_{1}(i)$ and $\eta_{2}(i)$ are uniformly randomly chosen from $\{-1, 1\}$ and $[0, 1]$, respectively. 
The linear operator $A$ is defined as $m = n/8 = 32768$ random cosine measurements, i.e., $A x=(\operatorname{dct}(x))_{J}$, 
where $\operatorname{dct}$ is the discrete cosine transform and the set $J$ is a subset of $\{1, 2, \cdots, n\}$ with $m$ elements.

We choose the step size $\tau$ among $\{0.5,1,1.5,2,2.5\}$ and the ratio $\tau/\sigma$ among $\{1,1.5,2,2.5,3\}$.
For the algorithms based on the augmented Lagrangian function, the penalty factor $\rho$ is chosen among $\{0.5,1,1.5,2,2.5,3\}$.
ADMM is implemented in the software YALL1 \cite{yang2011alternating}.
It is worth noticing that the subproblem of ADMM for the dual problem can be solved exactly since $AA^T=I$.
The relative error and primal infeasibility with respect to the total number of iterations for different problems are shown in Figure \ref{fig:bp_05_25} and \ref{fig:l1l1_05_25}.

The augmented term potentially accelerates the primal-dual method, especially for OGDA and high accuracy solution.
SOGDA-AL outperforms other algorithms in most of the problems.
By contrast, although ADMM solves the subproblem exactly and adopts the strategy of updating the penalty parameters, it has no obvious advantage over the other algorithms.
In addition, the linear convergence rate of the primal-dual methods can be observed, as we have proved.

\begin{figure*}[!htb]
    \centering
      \subfigure[20dB]{\includegraphics[scale = 0.42]{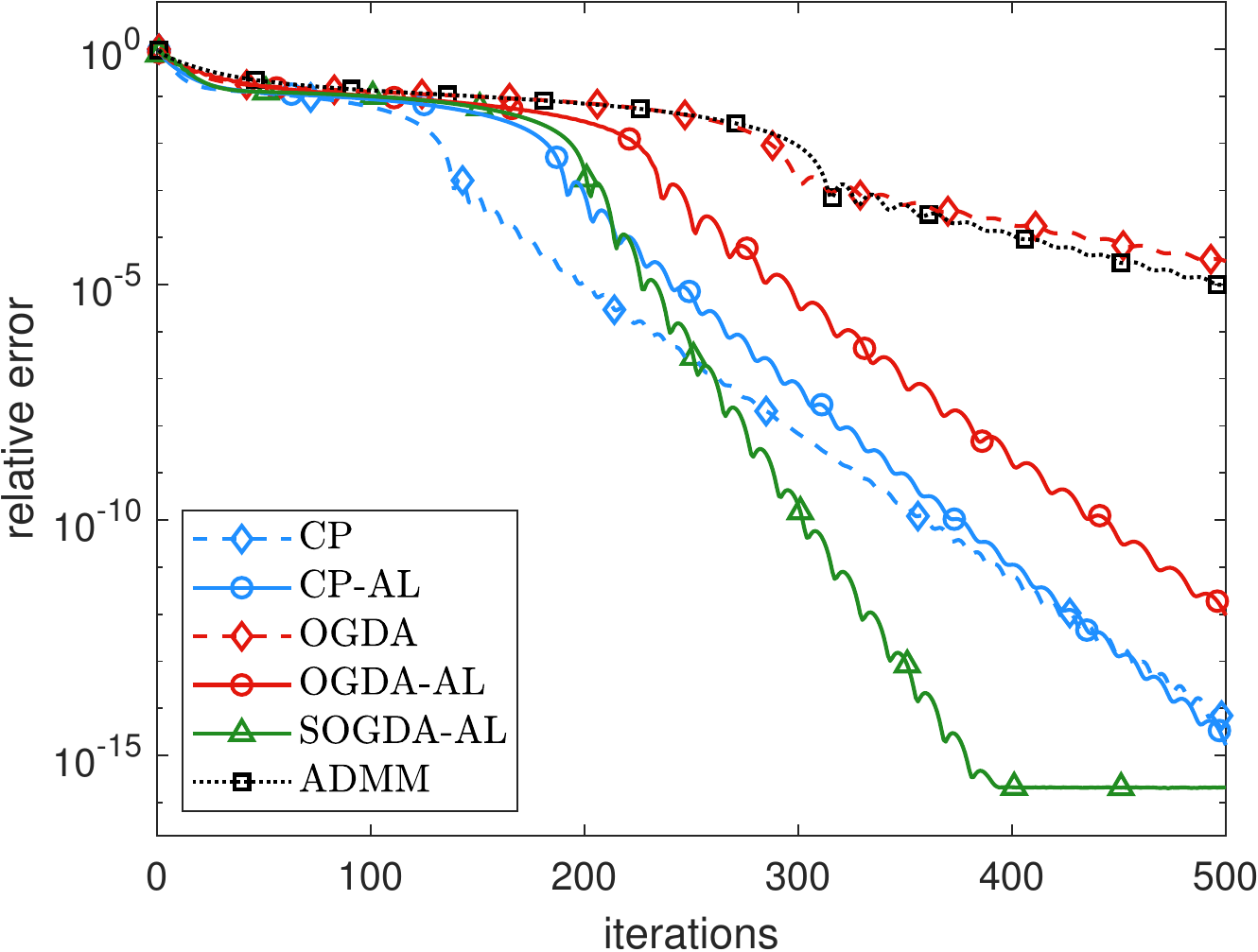}}
      \subfigure[20dB]{\includegraphics[scale = 0.42]{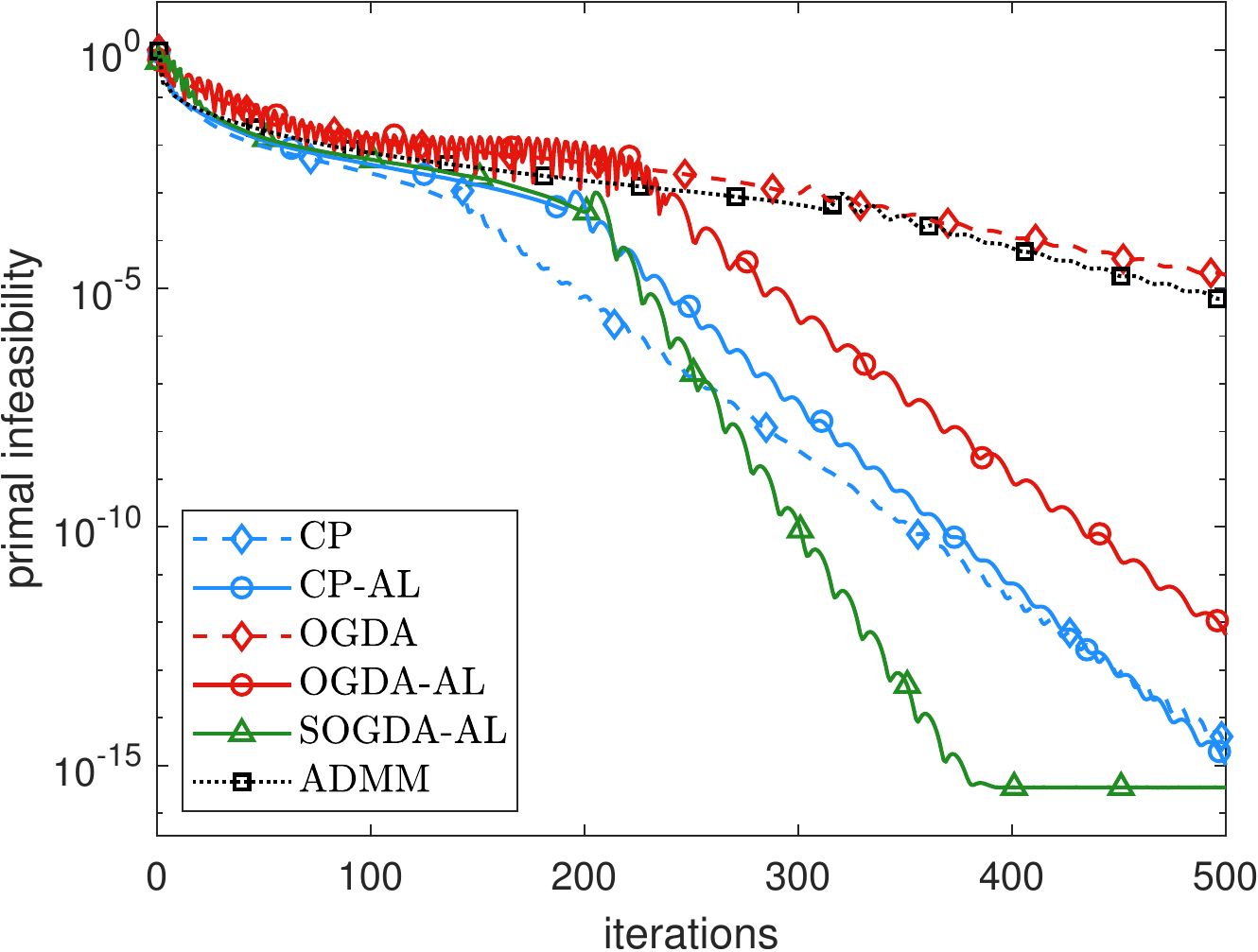}} \\
      \subfigure[40dB]{\includegraphics[scale = 0.42]{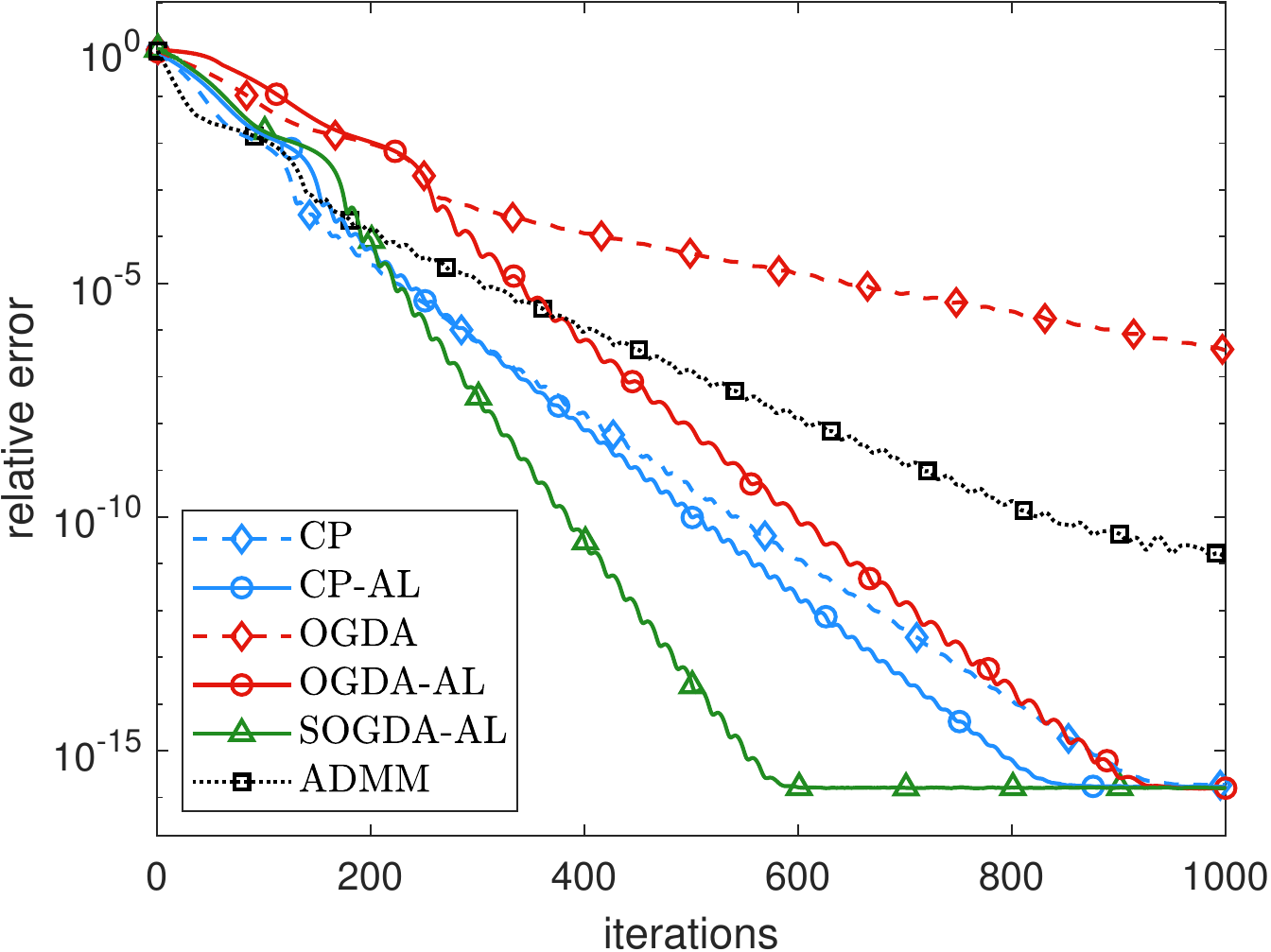}}
      \subfigure[40dB]{\includegraphics[scale = 0.42]{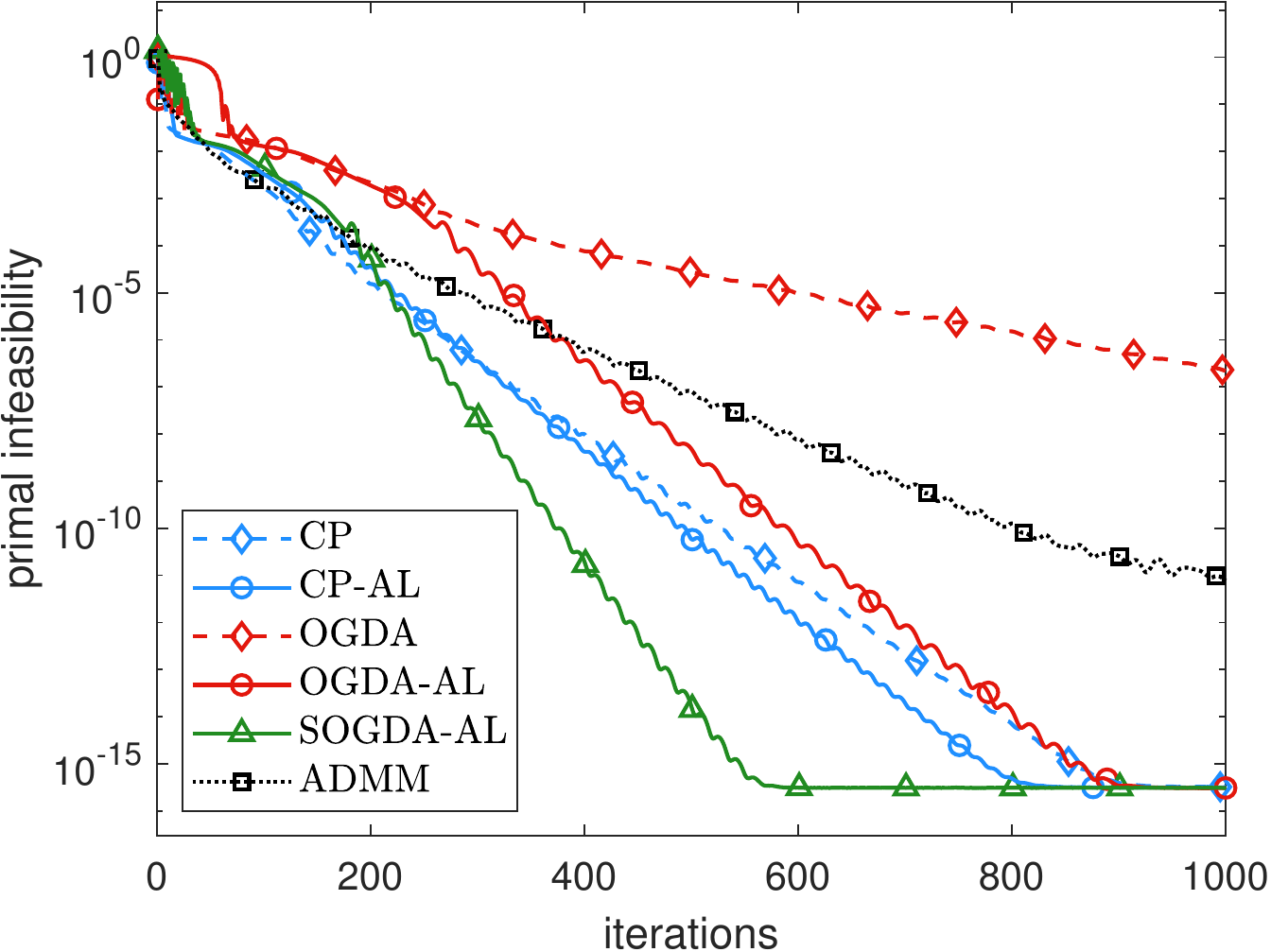}}
    \caption{Basis pursuit.}
      \label{fig:bp_05_25}
\end{figure*}

\begin{figure*}[!htb]
\centering
    \subfigure[20dB]{\includegraphics[scale = 0.42]{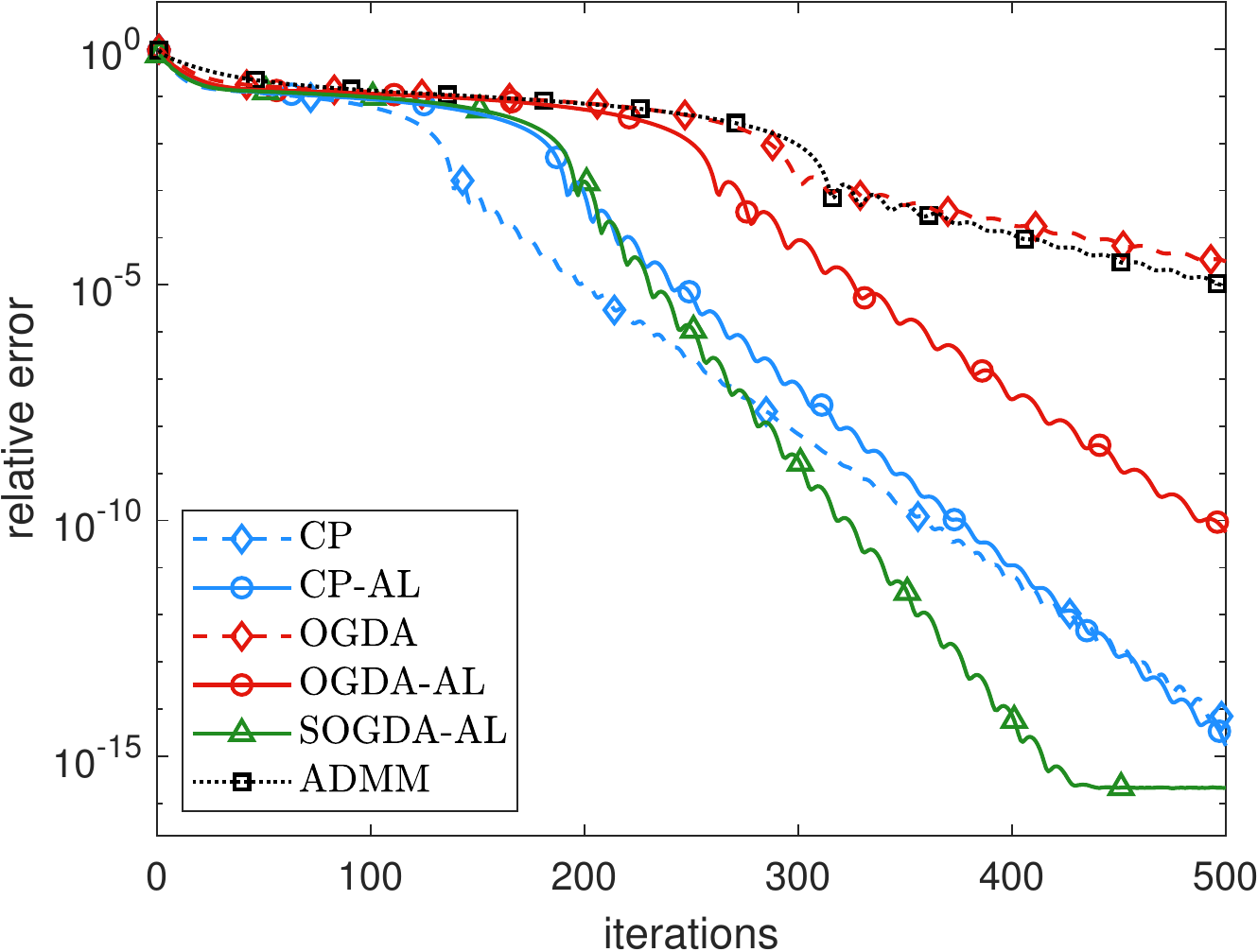}}
    \subfigure[20dB]{\includegraphics[scale = 0.42]{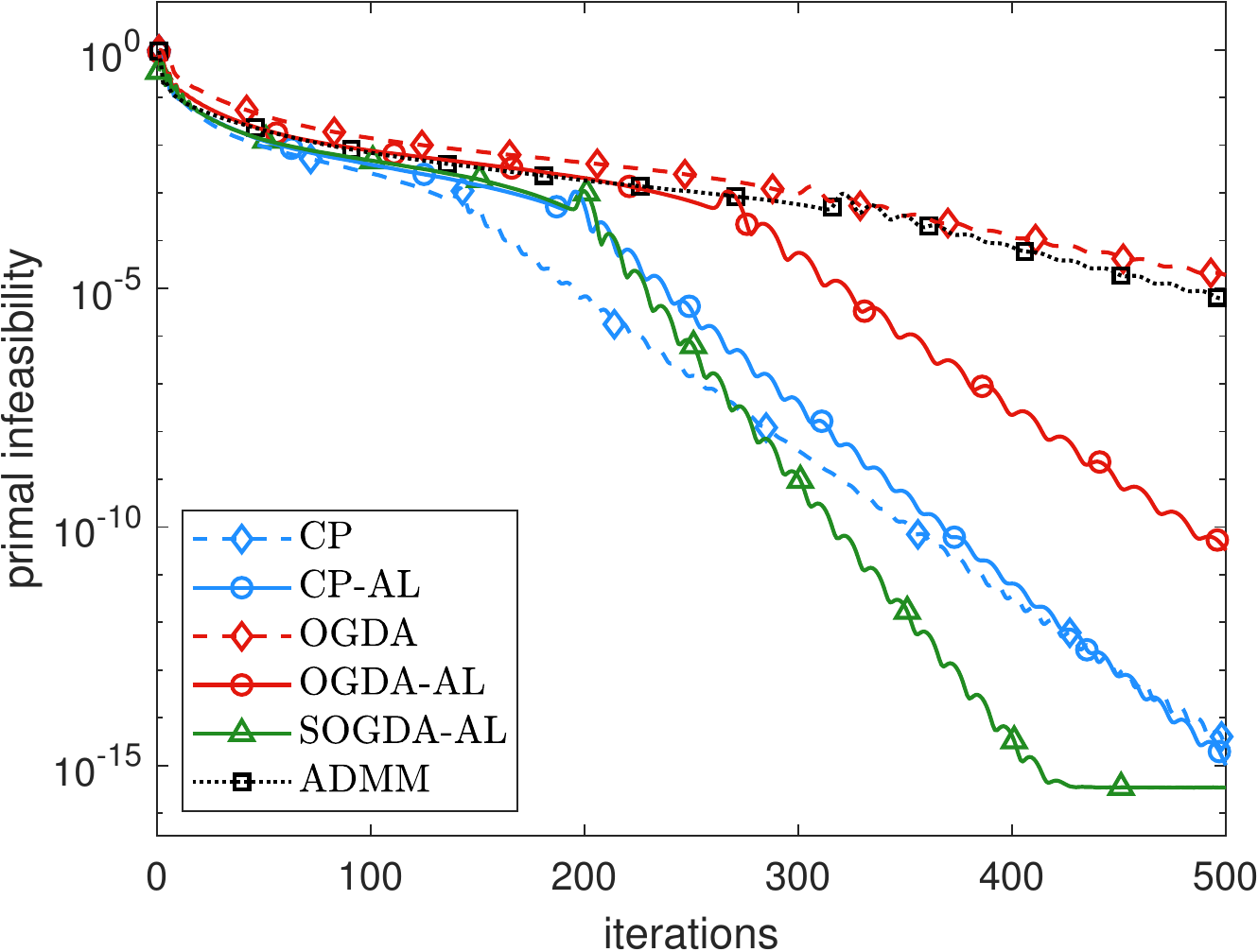}} \\
    \subfigure[40dB]{\includegraphics[scale = 0.42]{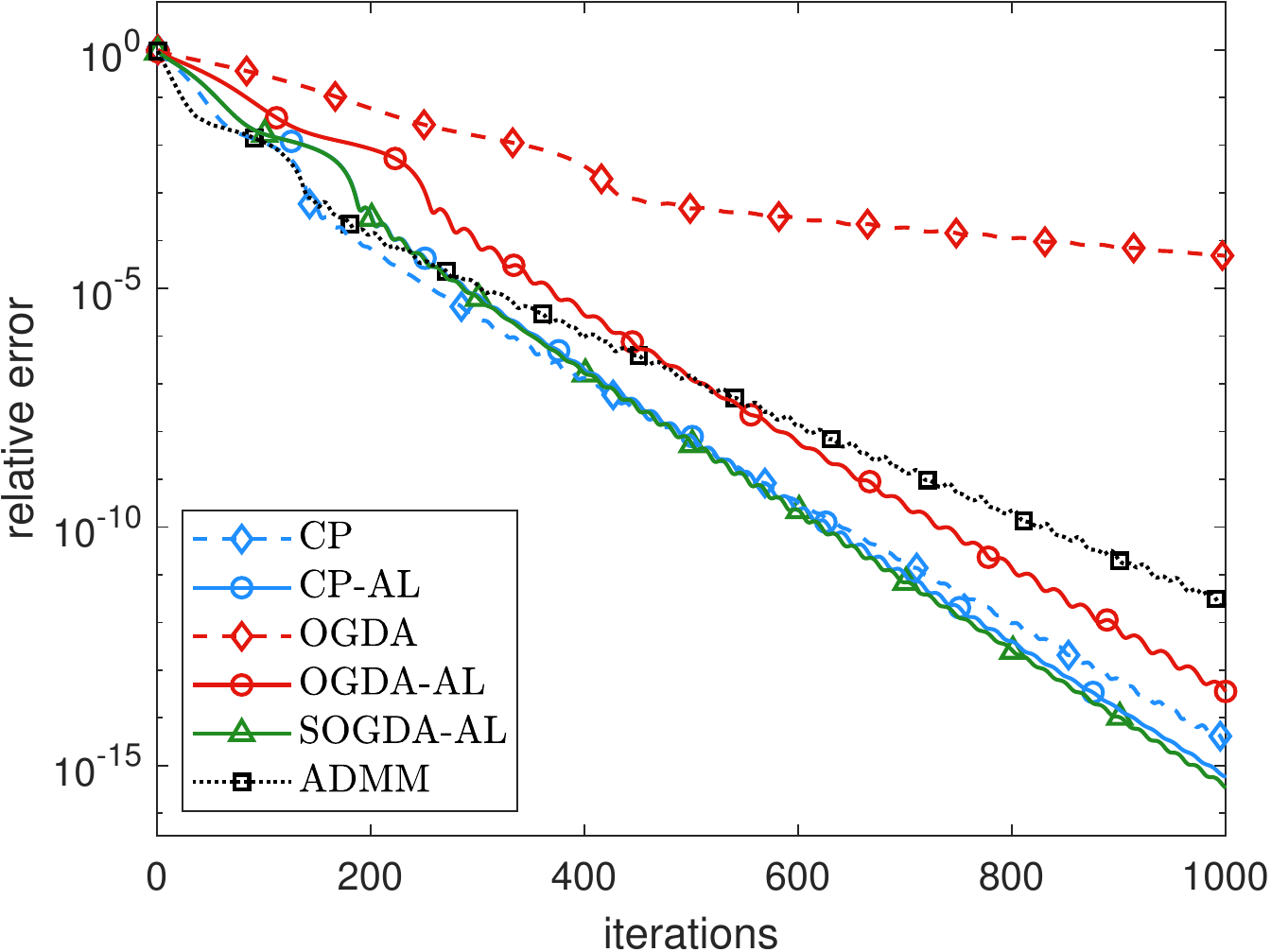}}
    \subfigure[40dB]{\includegraphics[scale = 0.42]{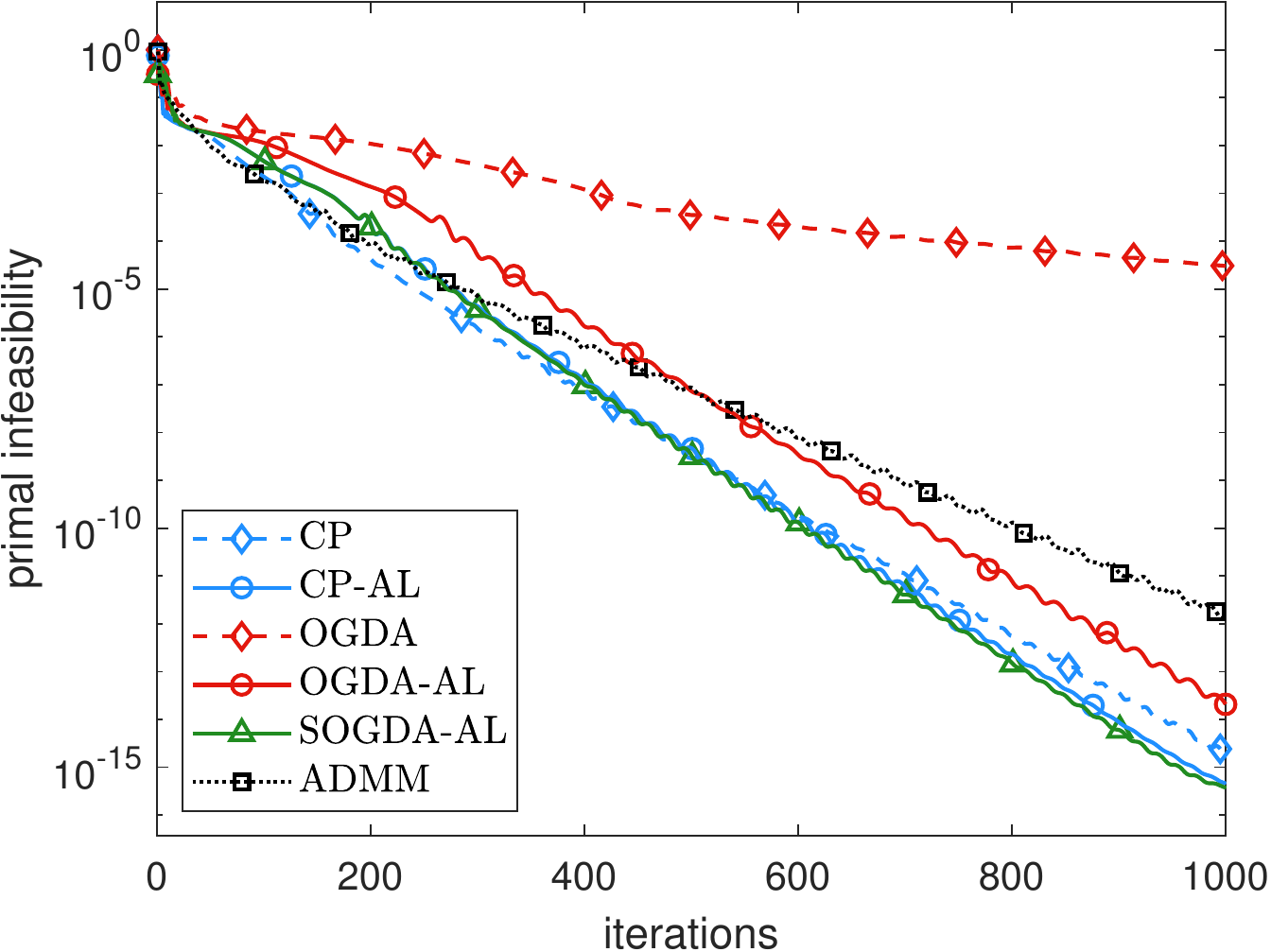}}
\caption{L1L1.}
    \label{fig:l1l1_05_25}
\end{figure*}

\subsection{Applications to multi-block problems}
\subsubsection{Multi-block basis pursuit}
Suppose that the data in \eqref{bp} can be partitioned into $N$ blocks:
$x = [x_1, x_2, \cdots, x_N]$, $A = [A_1, A_2, \cdots, A_N]$, 
then the problem can be rewritten as follows:
\begin{equation}\label{mbbp}
    \min_{x_1,x_2,\cdots,x_N}\ \sum_{i=1}^N\|x_i\|_1,\quad \st\ \sum_{i=1}^N A_i x_i=b,
\end{equation}
where $x_i\in\mathbbm{R}^{n_i}$, $A_i\in\mathbbm{R}^{m\times n_i}$, $\sum_{i=1}^N n_i = n$.
The augmented Lagrangian function becomes
\[
    \mathcal{L}_\rho(x_1,x_2,\cdots,x_N,y) = \sum_{i=1}^N\|x_i\|_1 - y^\T\left(\sum_{i=1}^N A_i x_i-b\right) + \frac\rho2\left\|\sum_{i=1}^N A_i x_i-b\right\|_2^2.
\]
The primal-dual methods can be obtained by setting 
$f(x)=0$, $h(x)=\sum_{i=1}^N\|x_i\|_1$, $\Psi(x,y)=- y^\T\left(\sum_{i=1}^N A_i x_i-b\right) + \frac\rho2\left\|\sum_{i=1}^N A_i x_i-b\right\|_2^2$ and $s(y)=0$ in \eqref{generalalgo}.
In fact, the derived multi-block algorithm is equivalent to the one-block algorithm obtained in Section \ref{bpsec}
because the update of the primal variable is separable and applied in a Jacobian fashion.
Therefore, we only need to test the primal-dual methods in the case of $N=1$ and the results of the multi-block cases are all the same.

As for the multi-block ADMM, the subproblem of minimizing $\mathcal{L}_{\!\rho}(x_{1},\cdots,x_N,y)$ with respect to $x_i$ has no explicit solution.
In order to overcome the obstacle, we introduce $u_i=x_i$ to get an equivalent form:
\[
    \min_{\substack{x_1,x_2,\cdots,x_N\\u_1,u_2,\cdots,u_N}}\ \sum_{i=1}^N\|x_i\|_1,\quad \st\ \sum_{i=1}^N A_i u_i=b, \quad x_i = u_i,\ i=1,\cdots,N.
\]
The augmented Lagrangian function becomes 
\begin{align*}
    \mathcal{L}^\prime_\rho(x_1,x_2,\cdots,x_N,y,z) =& \sum_{i=1}^N\|x_i\|_1 - y^\T\left(\sum_{i=1}^N A_i x_i-b\right) + \frac{\rho_1}2\left\|\sum_{i=1}^N A_i x_i-b\right\|^2\\
    &-z^\T(x-u)+\frac{\rho_2}2\|x-u\|^2.
\end{align*}
Then all the subproblems of multi-block ADMM can be solved exactly. 
In practice, we update the variables in the order of $x_1,x_2,\cdots,x_N,y,z$.
We define the following two metrics to measure the optimality:
\[
    \text{RelErr} = \frac{\|x-x^*\|_2}{\max(\|x^*\|_2,1)},\qquad
    \text{Pinf} = \frac{\|Ax-b\|_2+\|x-u\|_2}{\|b\|_2}.
\]

We test primal-dual algorithms and multi-block ADMM in a randomly generated example which is constructed as follows.
First, we randomly generate a sparse solution $x^*\in\mathbbm{R}^n$ with $k$ nonzero entries drawn from the standard Gaussian distribution.
The matrix $A$ is also generated by standard Gaussian distribution and the vector $b$ is set to be $Ax^*$. 
Then $x^*$ and $A$ is partitioned evenly into $N$ blocks. 
In our experiments, we set $n=1000, m=300, k=60$ and $N=1,2,5,10$.
We choose the primal step size $\tau$ in $\{0.01,0.02,0.03,0.04,0.05\}$ and the ratio $\tau/\sigma$ in $\{0.5,1,1.5,2,2.5,3\}$.
The penalty factor $\rho$ is chosen from $\{0.01,0.02,0.03,0.04,0.05\}$ for the primal-dual methods based on the augmented Lagrangian function.
The numerical results are shown in Figure \ref{fig:bp_gaussian}.

As observed in Figure \ref{fig:bp_gaussian}, SOGDA-AL has some advantages among primal-dual algorithms.
Algorithms based on the augmented Lagrangian function have faster convergence to high-accuracy solutions.
Multi-block ADMM performs well in the case of $N=1$ while converges more and more slowly as the number of blocks increases.
\begin{figure*}[!htb]
    \centering
        \subfigure[primal-dual methods]{\includegraphics[scale = 0.42]{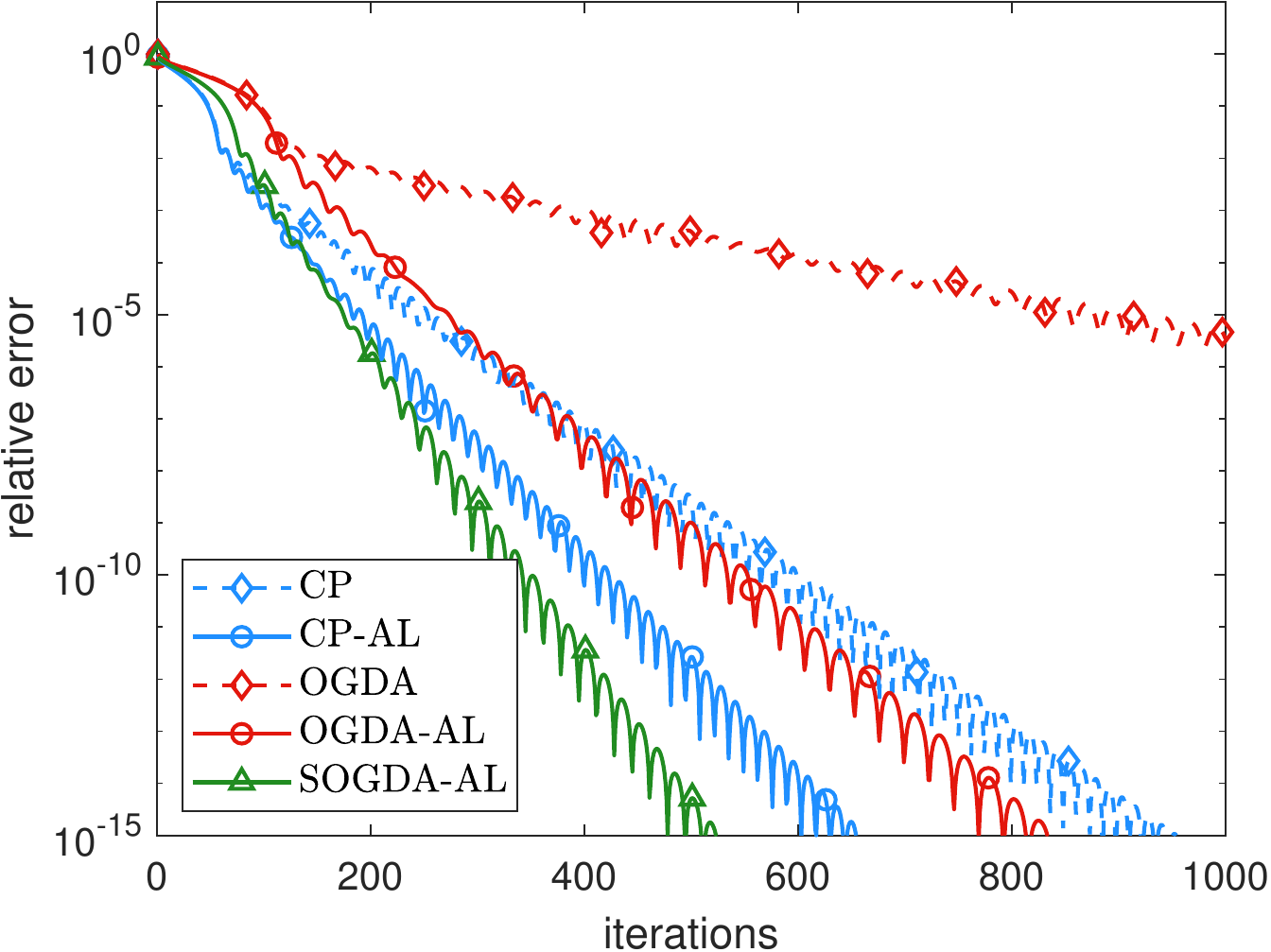}}
        \subfigure[primal-dual methods]{\includegraphics[scale = 0.42]{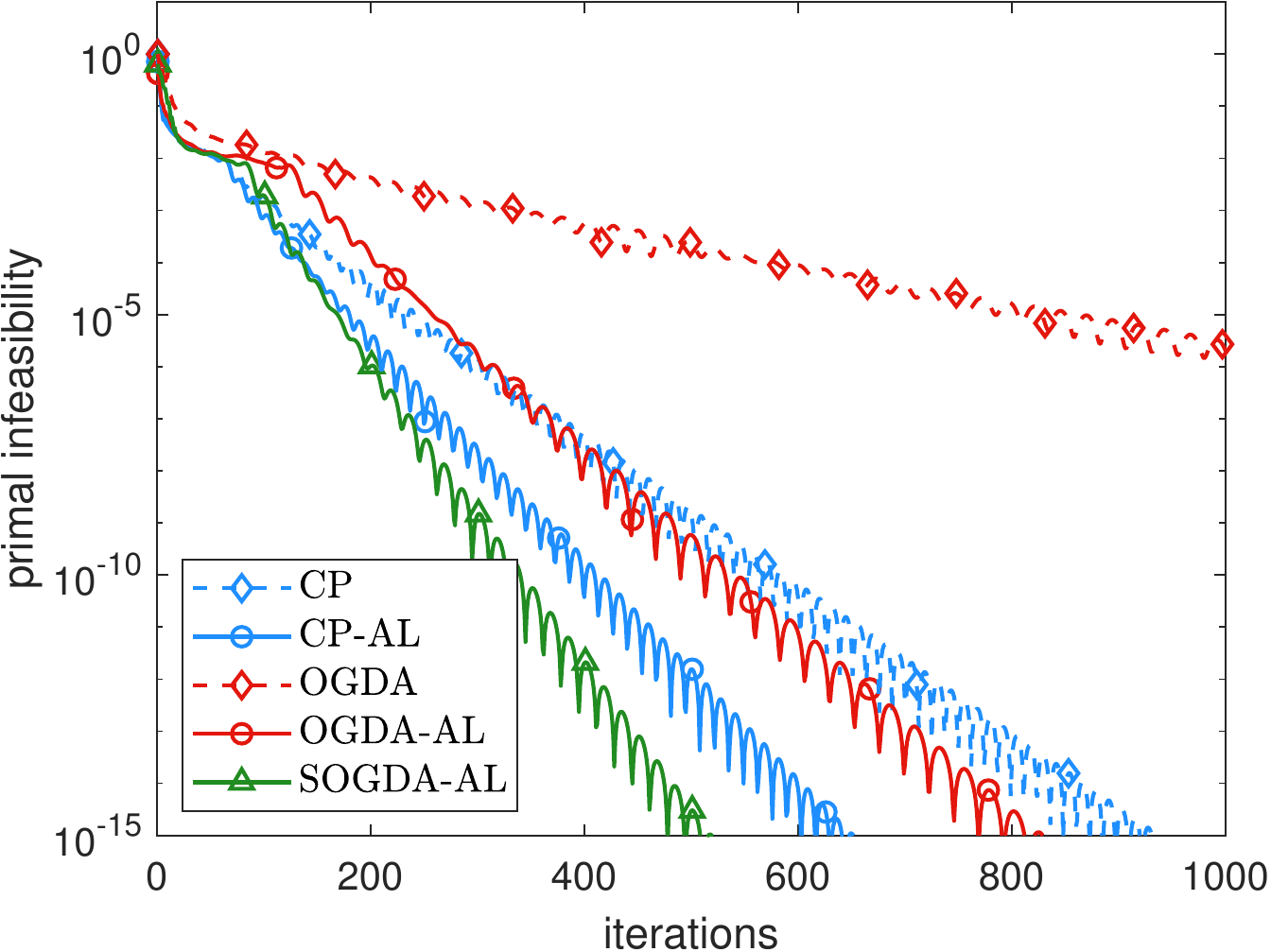}}\\
        \subfigure[multi-block ADMM]{\includegraphics[scale = 0.42]{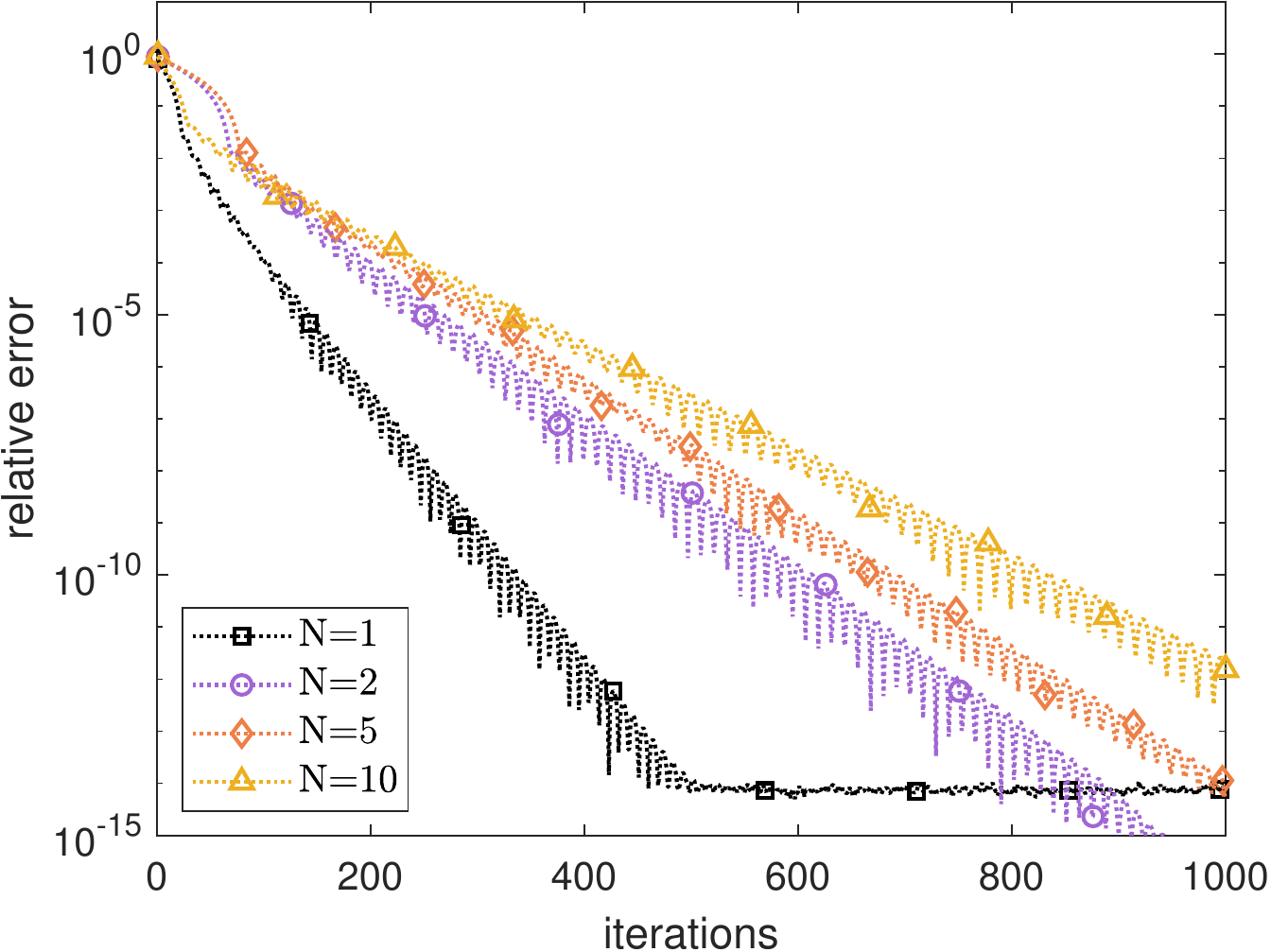}}
        \subfigure[multi-block ADMM]{\includegraphics[scale = 0.42]{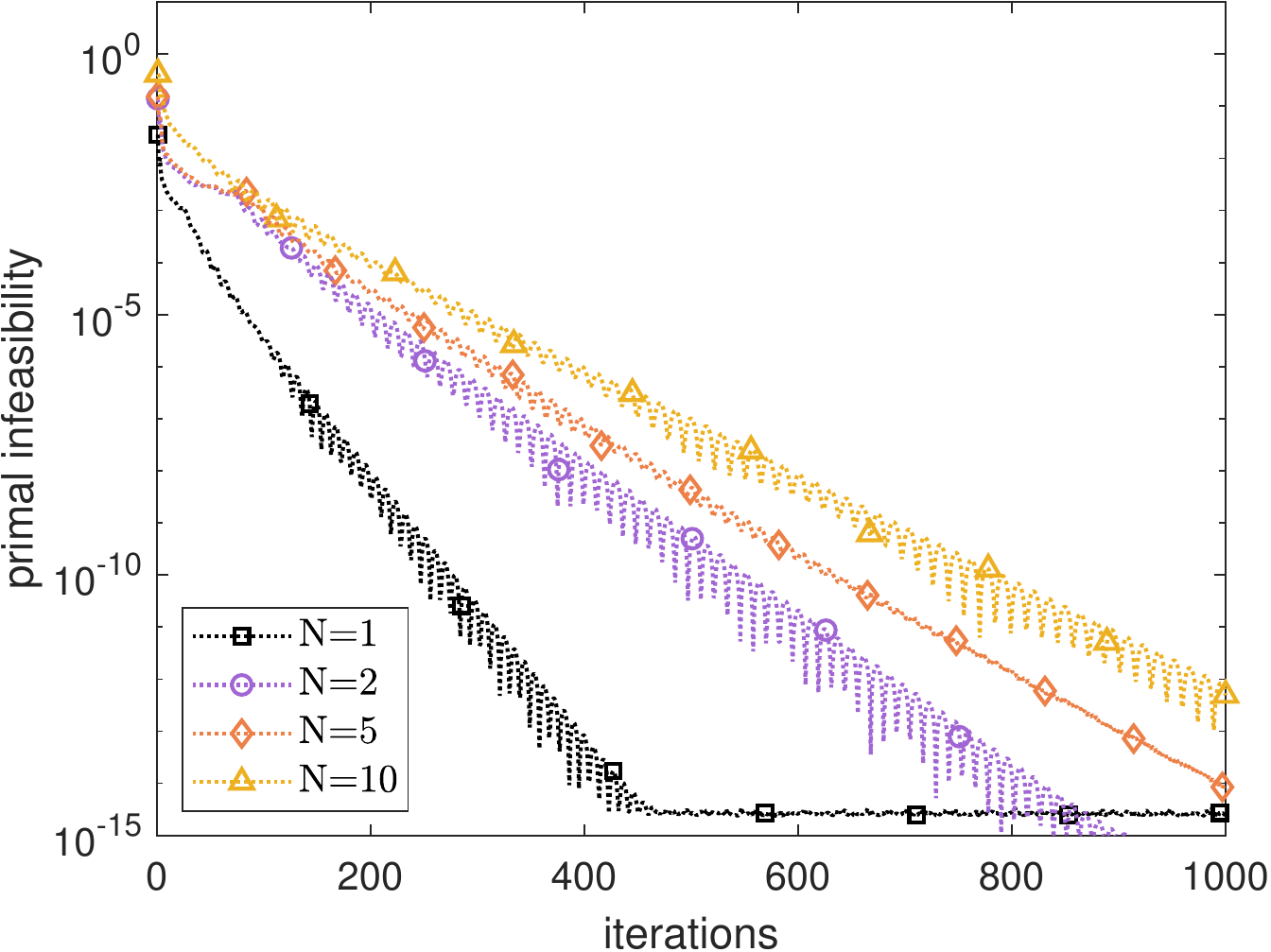}}
    \caption{Multi-block BP.}
        \label{fig:bp_gaussian}
\end{figure*}

\subsubsection{Non-convergent examples for the direct extension of ADMM}
In this section, we test primal-dual methods on non-convergent examples of multi-block ADMM present in \cite{chen2016direct}.
According to the convergence analysis established in Section \ref{sec:conv}, the primal-dual algorithms converge to the optimal point on these examples,
which directly demonstrates the advantage of primal-dual methods over multi-block ADMM.

The first example is to solve the linear equation:
\begin{equation}\label{eg1}
\left(\begin{array}{l}
    1 \\
    1 \\
    1
    \end{array}\right) x_{1}+\left(\begin{array}{l}
    1 \\
    1 \\
    2
    \end{array}\right) x_{2}+\left(\begin{array}{l}
    1 \\
    2 \\
    2
    \end{array}\right) x_{3}=\left(\begin{array}{l}
    0 \\
    0 \\
    0
    \end{array}\right).
\end{equation}
where $A = [A_1,A_2,A_3] = \left(\begin{array}{lll}1 & 1 & 1 \\1 & 1 & 2 \\1 & 2 & 2\end{array}\right)$, $b = \left(\begin{array}{l}0 \\0 \\0\end{array}\right)$.
Since $A$ is full rank, the unique solution of the equation is $x_1=0, x_2=0$ and $x_3=0$.
While applying multi-block ADMM on the problem, the penalty factor $\rho$ just scales the dual variable $y_k$ by a constant, hence any choice of $\rho$ is equivalent.

The second example is solving
\begin{equation}\label{eg2}
    \begin{split}
        &\min\ \frac12 x_{1}^{2},\\
        &\st\ \left(\begin{array}{l}1 \\ 1 \\ 1\end{array}\right) x_{1}+\left(\begin{array}{l}1 \\ 1 \\ 1\end{array}\right) x_{2}+\left(\begin{array}{l}1 \\ 1 \\ 2\end{array}\right) x_{3}+\left(\begin{array}{l}1 \\ 2 \\ 2\end{array}\right) x_{4}=\left(\begin{array}{l}0 \\ 0 \\ 0\end{array}\right),
    \end{split}
\end{equation}
As presented in \cite{chen2016direct}, the multi-block ADMM diverges for any $\rho\geq0$.

In the experiments, we randomly select a nonzero point as the initial point.
For primal-dual methods, we choose $\rho, \tau, \sigma$ from $\{1e-5,1e-4,1e-3,1e-2,1e-1,1,1e1,1e2,1e3\}$.
For the multi-block ADMM, we choose the penalty factor $\rho=1$ for both examples.
The numerical results are shown in Figure \ref{fig:diverge_admm}.

As expected, multi-block ADMM diverges in both examples
while the primal-dual algorithms show good convergence.
Algorithms based on the augmented Lagrangian function converge faster than those based on the Lagrangian function, especially for the Chambolle-Pock method. 
Furthermore, SOGDA-AL shows competitive performance again.
\begin{figure*}[!htb]
    \centering
        \subfigure[Example1]{\includegraphics[scale = 0.42]{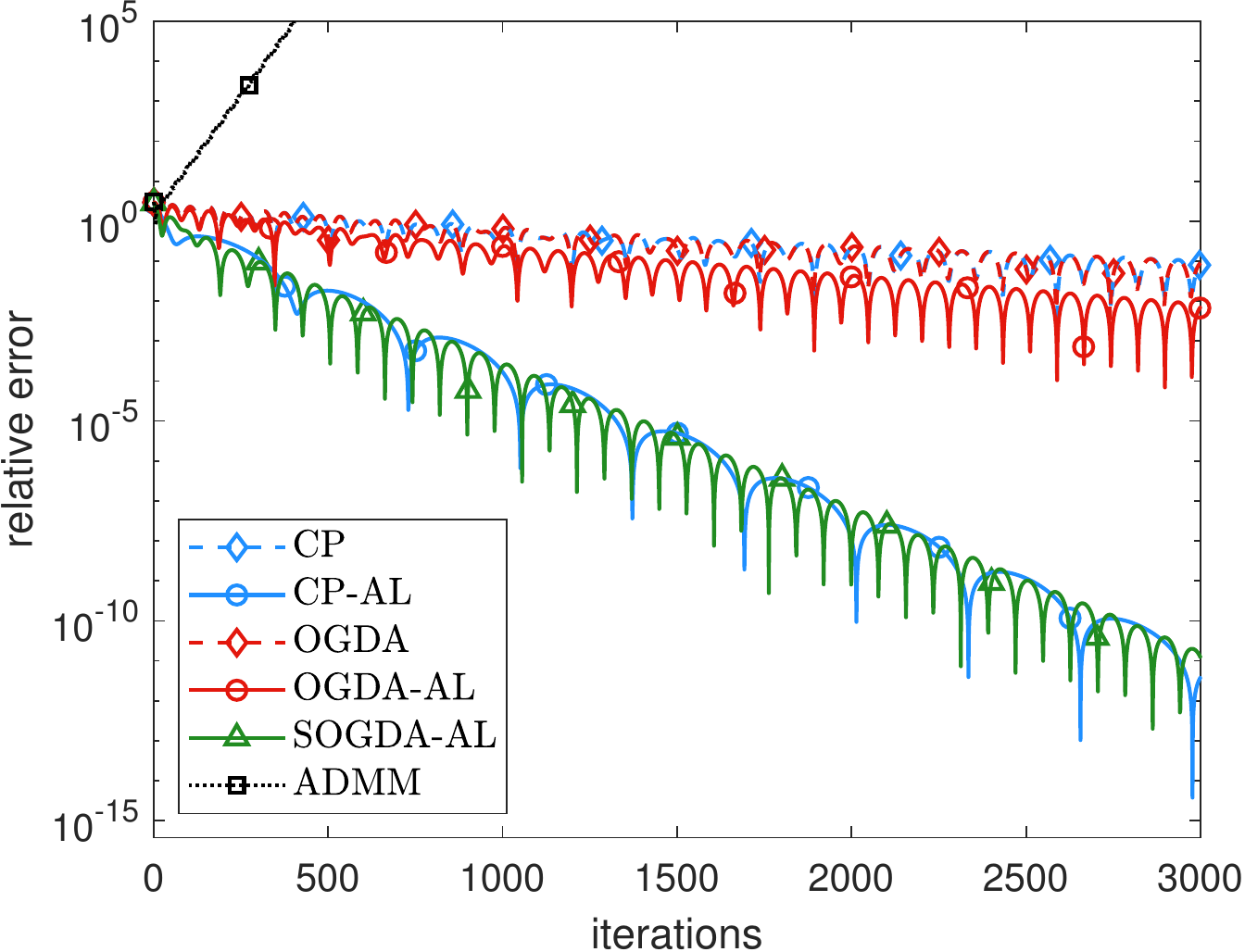}}
        \subfigure[Example1]{\includegraphics[scale = 0.42]{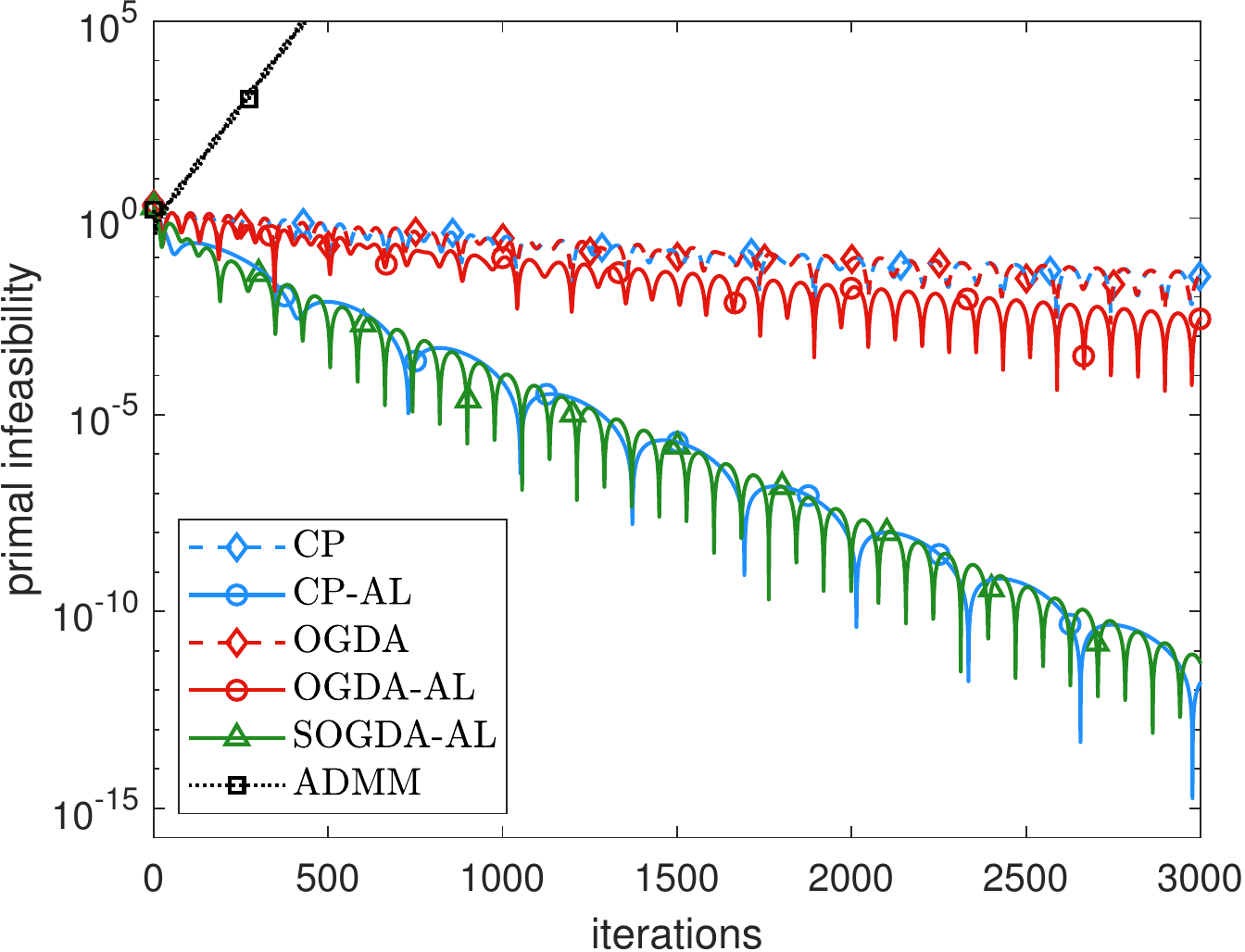}}\\
        \subfigure[Example2]{\includegraphics[scale = 0.42]{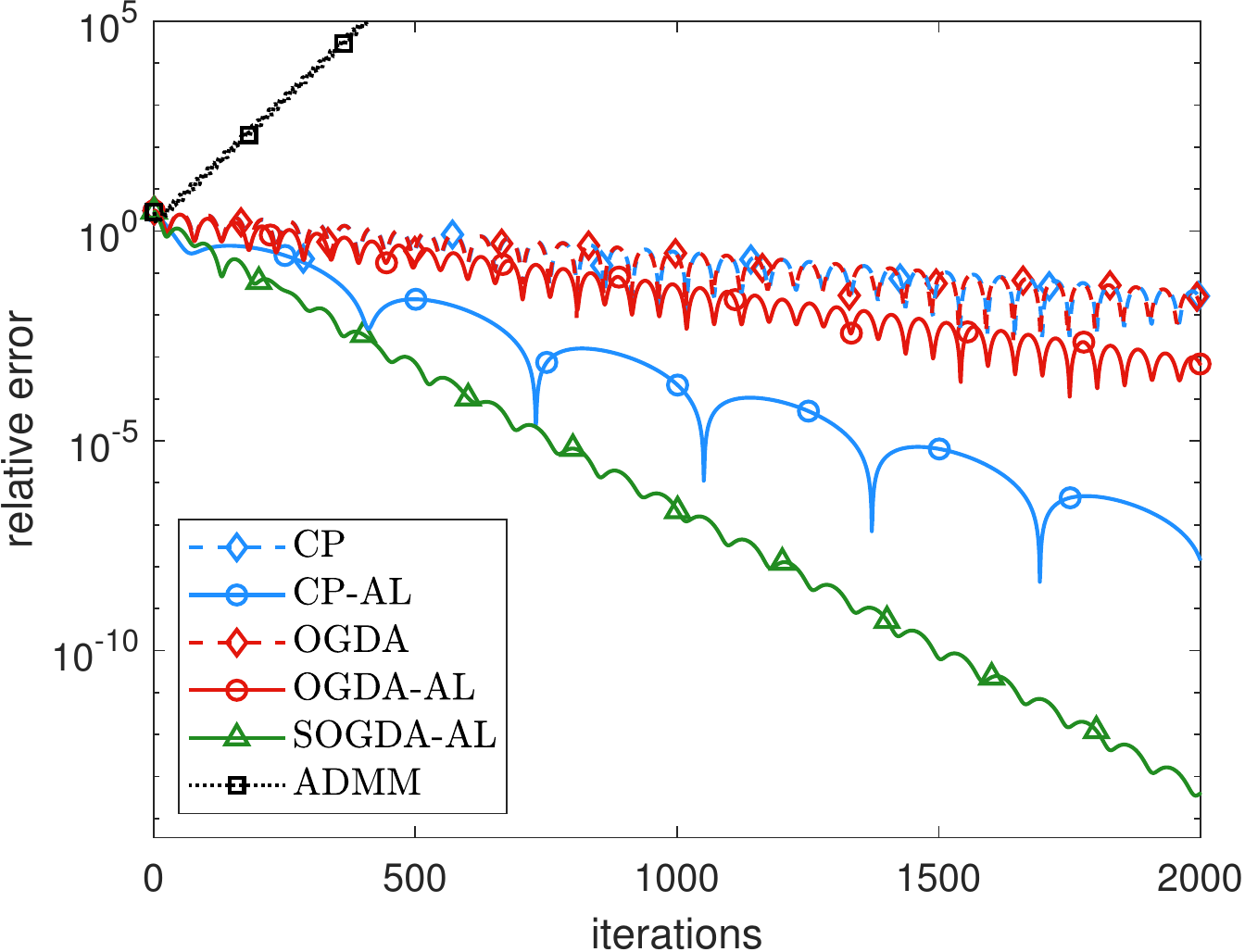}}
        \subfigure[Example2]{\includegraphics[scale = 0.42]{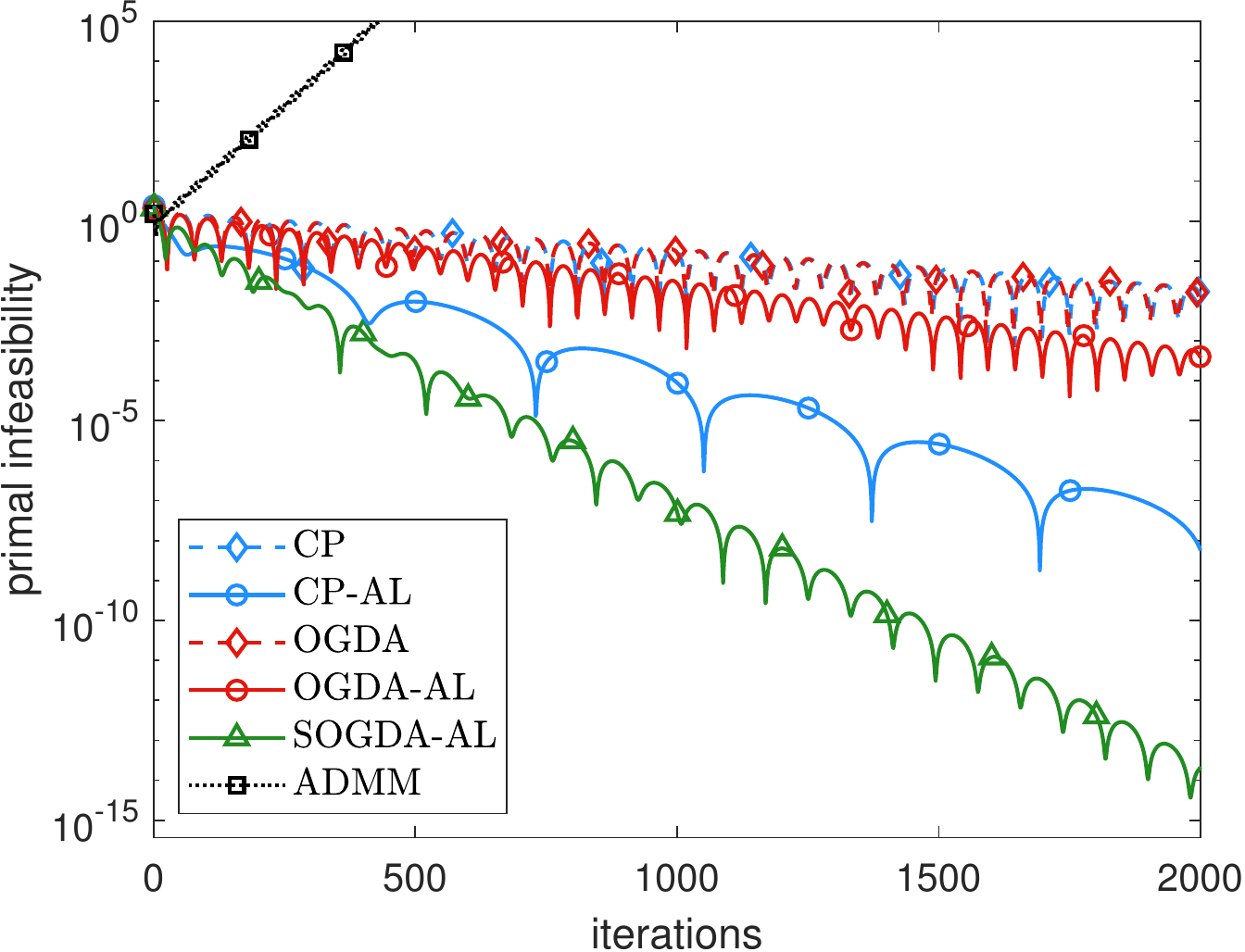}}
    \caption{Non-convergent examples for the direct extension of ADMM.}
        \label{fig:diverge_admm}
\end{figure*}

\section{Conclusions}
\label{sec:conclusions}
In this paper, we focus on the primal-dual methods for the conic constrained problems.
Several popular algorithms are unified into the proposed framework and
their $\mathcal{O}(1/N)$ ergodic convergence and linear convergence are analyzed under the entire framework.
In contrast to the existing theoretical results, 
we utilize the suboptimality and primal infeasibility to measure the optimality of the iterates, instead of the duality gap which is invalid without the boundedness assumption on the dual variable.
In addition, both the theory and experiments show that the penalty term added to the Lagrangian saddle point problem is helpful for convergence.
When compared to approximate dual ascent method such as ADMM, the primal-dual methods are easy to implement and enjoy better convergence guarantees, especially in the multi-block cases.
Preliminary numerical experiments verify the above points.

\section*{Appendix}
\begin{theorem}[Moreau's decomposition theorem \cite{moreau1962decomposition}]\label{Moreau}
    Let $\KK\subset \RR^n$ be a closed convex cone and $\KKc$ be its polar cone. For $x,y,z\in\RR^n$, the following statements are equivalent:
    \begin{itemize}
        \item $z=x+y,x\in\KK, y\in\KKc$ and $\langle x,y\rangle=0$,
        \item $x=\mathcal{P}_{\KK}(z)$ and $y=\mathcal{P}_{\KKc}(z)$.
    \end{itemize}
    \end{theorem}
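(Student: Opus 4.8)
The plan is to prove the equivalence by verifying each implication directly through the variational (obtuse-angle) characterization of the Euclidean projection onto a nonempty closed convex set. Recall that for such a set $C$ and any $z\in\RR^n$, a point $p$ equals $\mathcal{P}_C(z)$ if and only if $p\in C$ and $\iprod{z-p}{w-p}\leq 0$ for all $w\in C$. Since $\KK$ is closed convex and its polar cone $\KKc$ is automatically closed and convex, both $\mathcal{P}_{\KK}(z)$ and $\mathcal{P}_{\KKc}(z)$ are well defined, so no existence issue arises.

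First I would treat the implication from the first bullet to the second. Assume $z=x+y$ with $x\in\KK$, $y\in\KKc$ and $\iprod{x}{y}=0$. To show $x=\mathcal{P}_{\KK}(z)$ it suffices to check the variational inequality: for every $w\in\KK$ one has $\iprod{z-x}{w-x}=\iprod{y}{w-x}=\iprod{y}{w}-\iprod{y}{x}=\iprod{y}{w}\leq 0$, where the last inequality uses $y\in\KKc$ together with $w\in\KK$. The identical computation with the roles of $(\KK,x)$ and $(\KKc,y)$ interchanged---now using $\iprod{x}{w'}\leq 0$ for $w'\in\KKc$---yields $y=\mathcal{P}_{\KKc}(z)$.

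Conversely, I would derive the first bullet from the second. Set $x=\mathcal{P}_{\KK}(z)$ and $y=\mathcal{P}_{\KKc}(z)$; the memberships $x\in\KK$ and $y\in\KKc$ are immediate, so the substance is the identity $z=x+y$ together with orthogonality. Writing $\tilde y:=z-x$, the essential use of the conic structure enters here: since $\KK$ is a cone, both $0$ and $2x$ lie in $\KK$, so substituting $w=0$ and $w=2x$ into the variational inequality for $x$ gives $\iprod{z-x}{x}\geq 0$ and $\iprod{z-x}{x}\leq 0$, hence $\iprod{\tilde y}{x}=0$. For arbitrary $w\in\KK$ we then obtain $\iprod{\tilde y}{w}=\iprod{z-x}{w-x}+\iprod{\tilde y}{x}\leq 0$, so $\tilde y\in\KKc$. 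Finally, $\tilde y$ itself satisfies the projection characterization onto $\KKc$, because for any $w'\in\KKc$ we have $\iprod{z-\tilde y}{w'-\tilde y}=\iprod{x}{w'}-\iprod{x}{\tilde y}=\iprod{x}{w'}\leq 0$ by the definition of the polar cone; thus $\tilde y=\mathcal{P}_{\KKc}(z)=y$, which yields $z=x+y$ and $\iprod{x}{y}=0$.

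The argument is elementary, and the only genuinely delicate point---where convexity alone would not suffice---is extracting the orthogonality $\iprod{z-x}{x}=0$ from the one-sided variational inequality; this is exactly where homogeneity of the cone (the admissibility of both $0$ and $2x$ as test points) is indispensable. I would also note that this route avoids invoking the bipolar theorem $\KK^{\circ\circ}=\KK$, since each projection identity is verified against its own defining variational inequality rather than through a symmetry argument.
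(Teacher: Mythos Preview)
Your argument is correct: both implications go through cleanly via the obtuse-angle characterization of the projection, and the key step---testing with $w=0$ and $w=2x$ to force $\iprod{z-x}{x}=0$---is exactly where the conic structure is used, as you note. The paper itself does not supply a proof of this theorem; it merely states Moreau's decomposition with a citation to the original 1962 paper and then moves on to proving other lemmas, so there is nothing to compare against beyond observing that your self-contained derivation is sound.
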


The proof of basic lemmas is presented as follows. It is worth noting that when $\KK=\RR^{n}_{\leq 0}$, we have $\cpos{\cdot}=\pos{\cdot}$ being the standard entry-wise positive part, and $\cneg{\cdot}=\neg{\cdot}$ also.
\begin{proof}[Proof of Lemma \ref{lemma:algebra}]
    By the definition of $\cpos{\cdot}$ and $\cneg{\cdot}$, we have
    \begin{align*}
        \nrm{ w-w' }^2
        =&\nrm{ \cpos{w}-\cpos{w'}-\cneg{w}+\cneg{w'} }^2\\
        =&\nrm{ \cpos{w}-\cpos{w'} }^2 + \nrm{ \cneg{w}-\cneg{w'} }^2\\
        &+2\iprod{\cpos{w}}{\cneg{w'}}+2\iprod{\cpos{w'}}{\cneg{w}},
    \end{align*}
    where the second equality is due to the fact that $\iprod{\cpos{w}}{\cneg{w}}=\iprod{\cpos{w'}}{\cneg{w'}}=0$. Hence, the case $a>b$ is proven by noting that $\iprod{\cpos{w}}{\cneg{w'}}\geq 0$ and $\iprod{\cpos{w'}}{\cneg{w}}\geq0$. As of the case $a\geq b$, we have
    \begin{align*}
        &a^2\nrm{ w-w' }^2-(2a-b)\left[2a\iprod{ \cpos{w} }{ \cneg{w'} }+b\nrm{ \cpos{w}-\cpos{w'} }^2\right]\\
        =&(a-b)^2\nrm{ \cpos{w}-\cpos{w'} }^2+a^2\nrm{ \cneg{w}-\cneg{w'} }^2\\
        &-2a(a-b)\iprod{\cpos{w}}{\cneg{w'}}+2a^2\iprod{\cpos{w'}}{\cneg{w}}\\
        =&\nrm{(a-b)\left(\cpos{w}-\cpos{w'}\right)+a\left(\cneg{w}-\cneg{w'}\right)}^2\\
        &+2a(2a-b)\iprod{\cpos{w'}}{\cneg{w}}\\
        \geq& 0,
    \end{align*}
    which completes the proof of the first inequality. 

    For the second inequality, we consider
    \begin{align*}
        &\nrm{u-u'}^2+\nrm{v-v'}^2-\nrm{r-r'}^2\\
        =&\nrm{w+v-w'-v'}^2+\nrm{v-v'}^2-\nrm{\cpos{w}+v-\cpos{w'}-v'}^2\\
        =&\nrm{w-w'}^2+\nrm{v-v'}^2-\iprod{v-v'}{\cneg{w}-\cneg{w'}}-\nrm{\cpos{w}-\cpos{w'}}^2\\
        \geq&\nrm{\cneg{w}-\cneg{w'}}^2+\nrm{v-v'}^2-\iprod{v-v'}{\cneg{w}-\cneg{w'}}\\
        \geq&0.
    \end{align*}
\end{proof}

\begin{lemma}\label{xleq0}
    When $\KK=\{x|x\leq0\}$ and $\rho>0$, for any $i\in[m]$, the dual update rule of \eqref{generalalgo} is 
    \begin{equation}\label{lemma2}
        y_i^{k+1}=
        \begin{cases}
            0, & \omega_i>0\ \text{and}\ \kappa \nu_i\geq-\omega_i,\\
            \omega_i, & \omega_i\leq0\ \text{and}\ \nu_i\geq \omega_i,\\
            \frac{\omega_i+\kappa \nu_i}{\kappa+1}, &\text{otherwise}.
        \end{cases}
    \end{equation}
\end{lemma}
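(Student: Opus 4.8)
The plan is to reduce the dual update to a scalar fixed-point equation in each coordinate and then solve it by a short case analysis. Since the lemma concerns the setting of Remark~\ref{rmk:explicit} where $s(\cdot)=\mathbbm{1}_{\KK^*}(\cdot)$, the update is $y^{k+1}=\mathcal{P}_{\KK^*}[\,\cdot\,]$ rather than the identity used in Lemma~\ref{s0cone}. First I would record the two structural facts specific to $\KK=\{x\mid x\le 0\}$: its dual cone is $\KK^*=\RR^m_{\le 0}$, so that $\prox_{\sigma s}=\mathcal{P}_{\KK^*}$ acts coordinatewise as $u\mapsto\min(u,0)$; and, as noted at the start of the appendix, $\cpos{\cdot}=\pos{\cdot}$ and $\cneg{\cdot}=\neg{\cdot}$ are the entrywise positive and negative parts. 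Repeating verbatim the computation in the proof of Lemma~\ref{s0cone} — substitute $g_y^{k+1}$, collect the terms not involving $y^{k+1}$ into $\omega$, and use the positive homogeneity $\rho\,\cneg{w^{k+1}}=\cneg{\nu-y^{k+1}}$ (since $\nu-y^{k+1}=\rho w^{k+1}$) — shows that the bracketed argument of the proximal step equals $\omega-\kappa\,\cneg{\nu-y^{k+1}}$, exactly as before. Applying $\mathcal{P}_{\KK^*}$ and using $\neg{\nu-y^{k+1}}=\pos{y^{k+1}-\nu}$ coordinatewise then yields the decoupled implicit equations
\[
y_i^{k+1}=\min\!\big(\omega_i-\kappa\,[\,y_i^{k+1}-\nu_i\,]_+,\,0\big),\qquad i\in[m].
\]

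Before solving, I would justify that each scalar equation has a unique root, so that the subsequent case analysis recovers the genuine dual iterate rather than merely a candidate. Viewed as a function of $t=y_i^{k+1}$, the right-hand side is nonincreasing: $[t-\nu_i]_+$ is nondecreasing, multiplication by $-\kappa\le 0$ reverses monotonicity, and $\min(\cdot,0)$ preserves it. Hence $t\mapsto t-\min(\omega_i-\kappa[t-\nu_i]_+,0)$ is strictly increasing and vanishes at a unique point; alternatively, uniqueness is automatic because the iterate is the value of a single-valued proximal map.

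It remains to solve the scalar equation, which is the only place real work happens. I would branch on whether the inner $[\cdot]_+$ and the outer $\min(\cdot,0)$ are active. When the inner part vanishes ($t\le\nu_i$) the equation reduces to $t=\min(\omega_i,0)$, producing the branch $y_i^{k+1}=\omega_i$ (feasible exactly when $\omega_i\le 0$ and $\nu_i\ge\omega_i$) together with part of the branch $y_i^{k+1}=0$; when the inner part is active ($t>\nu_i$) it reduces to $t=\min(\omega_i-\kappa(t-\nu_i),0)$, whose nontrivial solution is $t=\tfrac{\omega_i+\kappa\nu_i}{\kappa+1}$ (feasible when $\nu_i<\omega_i$ and $\kappa\nu_i\le-\omega_i$) and otherwise $t=0$. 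Collecting the feasibility conditions, every region where the output is $0$ merges into ``$\omega_i>0$ and $\kappa\nu_i\ge-\omega_i$'', the region giving $\omega_i$ into ``$\omega_i\le 0$ and $\nu_i\ge\omega_i$'', and the complement into the affine formula. The only delicate check is the boundary $\kappa\nu_i=-\omega_i$, where $\tfrac{\omega_i+\kappa\nu_i}{\kappa+1}=0$ so the two expressions agree and the partition into the three stated cases is consistent. The main obstacle is thus not any single estimate but the careful bookkeeping of these feasibility conditions and their seamless gluing at the boundaries; once that is done, \eqref{lemma2} follows.
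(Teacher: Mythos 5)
Your proof is correct and takes essentially the same route as the paper's: reduce the dual update to the coordinatewise implicit equation $y_i=\min\left(\omega_i-\kappa\posp{y_i-\nu_i},0\right)$ (the paper writes it as $y=-\negp{\omega-\kappa\negp{\nu-y}}$) and solve it by case analysis on whether the inner positive/negative part is active, collecting the identical feasibility conditions and checking the boundary gluing. Your monotonicity argument for existence and uniqueness of the root is a sensible addition that the paper leaves implicit in its exhaustive case discussion — though note your parenthetical alternative (uniqueness ``automatic'' from single-valuedness of the proximal map) would not suffice on its own, since the scheme is implicit and the fixed-point equation, not the prox evaluation, is what must have a unique solution.
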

\begin{proof}
    According to the dual update rule in \eqref{generalalgo}, $y^{k+1}$ is the solution of the following equation:
    \[
        y = -\negp{\omega-\kappa\negp{\nu-y}}.
    \]
    We can give the solution of the equation through category discussion:
    \begin{enumerate}
    \item If $\nu_i\geq y_i$, $\negp{\nu_i-y_i}=0$ and hence $y_i = -\negp{\omega_i}$. 
    \item If $\nu_i<y_i$, the equation becomes $y_i = -\negp{\omega_i+\kappa(\nu_i-y_i)}$, then we consider the following two cases:
    \begin{itemize}
        \item When $\omega_i+\kappa \nu_i\geq 0$, it holds that $\omega_i+\kappa(\nu_i-y_i)\geq0$ due to the fact that $\kappa\geq0$ and $y_i\leq0$. Thus, we obtain $y_i=0$.
        \item When $\omega_i+\kappa \nu_i< 0$, the equation has a solution if and only if $\omega_i+\kappa(\nu_i-y_i)\leq0$, that is $y_i = \omega_i+\kappa(\nu_i-y_i)$. 
        Thus, the solution is $y_i=\frac{\omega_i+\kappa \nu_i}{\kappa+1}$.
    \end{itemize}
    \end{enumerate}
    By further simplification, we get \eqref{lemma2}.
    \qed
\end{proof}

\begin{lemma}\label{normlemma}
    For any $a\in\mathbbm{R}^n$ and $b\in-\KK$, it holds that $\|\cpos{a+b}\|\geq\|\cpos{a}\|$.
\end{lemma}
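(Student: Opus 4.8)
The plan is to reduce the norm inequality to a single inner-product lower bound and then finish with Cauchy--Schwarz. Write $p = \cpos{a}$ and $q = \cpos{a+b}$, both of which lie in the polar cone $\KKc = -\KK^*$. If $p = 0$ the claim is trivial, so I would assume $p \neq 0$. The target estimate is $\iprod{q}{p} \geq \nrm{p}^2$; granting it, Cauchy--Schwarz gives $\nrm{p}^2 \leq \iprod{q}{p} \leq \nrm{q}\nrm{p}$, and dividing by $\nrm{p}$ yields $\nrm{\cpos{a}} \leq \nrm{\cpos{a+b}}$ directly.

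First I would record the relevant projection facts via Moreau's decomposition (Theorem \ref{Moreau}). It gives $a - p = \PP_{\KK}(a) \in \KK$ with $\iprod{a-p}{p} = 0$, and likewise $(a+b) - q = \PP_{\KK}(a+b) \in \KK$. In particular the orthogonality relation yields $\iprod{a}{p} = \nrm{p}^2$. Next I would split the quantity of interest as $\iprod{q}{p} = \iprod{q-(a+b)}{p} + \iprod{a}{p} + \iprod{b}{p} = \iprod{q-(a+b)}{p} + \nrm{p}^2 + \iprod{b}{p}$, so it remains only to show that the two extra inner products are nonnegative.

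Both sign estimates rest on the elementary observation that if $w \in -\KK$ and $v \in \KKc = -\KK^*$, then $\iprod{w}{v} \geq 0$: writing $w = -u$ with $u \in \KK$ and $v = -\xi$ with $\xi \in \KK^*$, one has $\iprod{w}{v} = \iprod{u}{\xi} \geq 0$ by the very definition of the dual cone. Applying this with $w = q - (a+b) = -\PP_{\KK}(a+b) \in -\KK$ and $v = p \in \KKc$ gives $\iprod{q-(a+b)}{p} \geq 0$, and applying it with $w = b \in -\KK$ and $v = p$ gives $\iprod{b}{p} \geq 0$. Combining the three pieces yields $\iprod{q}{p} \geq \nrm{p}^2$, exactly as required.

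I do not anticipate a genuine obstacle in this argument. The only points that demand care are the bookkeeping of the orientation of the polar cone $\KKc = -\KK^*$, so that the dual-cone sign condition is invoked with the correct signs, and the separate handling of the degenerate case $p = 0$ before dividing by $\nrm{p}$.
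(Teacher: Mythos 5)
Your proof is correct: Moreau's decomposition (Theorem \ref{Moreau}) indeed gives $\iprod{a}{\cpos{a}}=\nrm{\cpos{a}}^2$ and $\cpos{a+b}-(a+b)=-\PP_{\KK}(a+b)\in-\KK$, your pairing observation ($w\in-\KK$ and $v\in\KKc=-\KK^*$ imply $\iprod{w}{v}\geq0$) is invoked with the correct orientations in both places, and you handle the degenerate case $\cpos{a}=0$ before dividing by $\nrm{\cpos{a}}$. Your route, however, is genuinely different from the paper's. The paper argues variationally: writing $c=\cpos{a+b}$ and $d=\cneg{a+b}$, it notes that $-b-d\in\KK$ (a sum of two elements of the convex cone $\KK$) and that $a-(-b-d)=c$, so the minimizing characterization $\nrm{\cpos{a}}=\nrm{a-\PP_{\KK}(a)}=\min_{x\in\KK}\nrm{a-x}\leq\nrm{c}$ finishes in a single line. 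You instead prove the inner-product estimate $\iprod{\cpos{a+b}}{\cpos{a}}\geq\nrm{\cpos{a}}^2$ and conclude by Cauchy--Schwarz. The paper's choice of one feasible comparison point in $\KK$ buys brevity; your decomposition buys strictly more information --- not only $\nrm{\cpos{a+b}}\geq\nrm{\cpos{a}}$ but a quantitative acute-angle statement between the two projections that the norm inequality alone does not record --- and it relies only on Moreau orthogonality and dual-cone sign conditions, never on the distance-minimizing property of the projection.
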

\begin{proof}
Write $c=\cpos{a+b}\in\KK^{\circ}, d=\cneg{a+b}\in-\KK$, then $a+b=c-d$. By definition of $\mathcal{P}_{\KK}$, we have
$$
\nrm{a-\mathcal{P}_{\KK}(a)}=\min_{x\in\KK}\|a-x\|\leq \nrm{a-(-b-d)}=\|c\|.
$$
Hence, $\nrm{\cpos{a}}=\nrm{a+\cneg{a}}=\nrm{a-\mathcal{P}_{\KK}(a)}\leq\|c\|=\nrm{\cpos{a+b}}$.
\end{proof}

\bibliographystyle{spmpsci}      
\bibliography{references}   

\end{document}